\theoremstyle{plain}
\newtheorem{theorem}{Theorem}
\newtheorem{corollary}{Corollary}[theorem]
\newtheorem{proposition}{Proposition}[section]
\newtheorem*{proposition*}{Proposition}
\newtheorem{lemma}[proposition]{Lemma}
\newtheorem*{lemma*}{Lemma}
\newtheorem*{corollary*}{Corollary}
\theoremstyle{definition}
\newtheorem*{remark}{Remark}
\theoremstyle{remark}
\numberwithin{equation}{section}
\newcommand{\qtext}[1]{\quad\text{#1}}
\newcommand{\abs}[1]{\ensuremath{\left|#1\right|}}
\DeclareMathOperator {\Msym} {sym}
\newcommand{\Mdemi}{\frac{1}{2}}
\newcommand{\BmR}{\mathbb{R}}
\newcommand{\Mdede}[4]{
\begin{pmatrix}
#1&#2  \\
#3&#4 \\
\end{pmatrix}
}
\newcommand{\SB}{\backslash}
\newcommand{\BmZ}{\mathbb{Z}}
\newcommand{\Mpartialsquare}[1]{\frac{\partial^2}{\partial #1^2}}
\newcommand{\FmH}{\mathfrak{H}}
\newcommand{\FmD}{\mathfrak{D}}
\newcommand{\Fms}{\mathfrak{s}}
\newcommand{\Fml}{\mathfrak{l}}
\newcommand {\pnorm}[1]   {\left\lVert #1 \right\rVert}
\DeclareMathOperator {\Ccinf}  {\CmC^\infty_c}
\DeclareMathOperator {\GL} {GL}
\DeclareMathOperator {\SL} {SL}
\newcommand{\CmC}{\mathcal{C}}
\newcommand{\Mpartial}[1]{\frac{\partial}{\partial #1}}
\DeclareMathOperator {\Msgn}  {sgn}
\newcommand{\BmN}{\mathbb{N}}
\newcommand{\CmH}{\mathcal{H}}
\DeclareMathOperator {\Mch}  {ch}
\DeclareMathOperator {\MRe}  {\Re e}
\DeclareMathOperator {\MIm}  {Im}
\newcommand{\BmC}{\mathbb{C}}
\newcommand{\Z}{\mathbb{Z}}
\newcommand{\CmS}{\mathcal{S}}
\newcommand{\Lquote}[1]{``#1''}
\newcommand{\lcusp}{L^2_{\text{cusp}}(\Gamma \SB G,\chi)}
\newcommand{\lgen}{L^2(\widetilde\Gamma\SB \widetilde G,\chi)}
\newcommand{\fpart}[1]{\{#1\}}
\newcommand{\pluscont}{+\text{ cont.}}
\newcommand{\Ioff}{I_\mathrm{off}}
\newcommand{\Ires}{I_\mathrm{res}}
\newcommand{\Ht}{\mathrm{Ht}}
\newcommand{\CmO}{\mathcal{O}}
\DeclareMathOperator{\MCl}{Cl}
\newcommand{\clchar}{\widehat{\MCl}_D}
\newcommand{\BmQ}{\mathbb{Q}}
\newcommand{\Fma}{\mathfrak{a}}
\newcommand{\TmN}{\mathbf{N}} 
\begin{document}

\title[non-split sums]{Non-split sums of coefficients of $GL(2)$-automorphic forms}

\author{Nicolas Templier}
\address{Department of Mathematics, Fine Hall, Washington Road, Princeton, NJ 08544-1000.}
\email{templier@math.princeton.edu}
\date{\today}

\author{Jacob Tsimerman}
\address{Department of Mathematics, Fine Hall, Washington Road, Princeton, NJ 08544-1000.}
\email{jtsimerm@math.princeton.edu}


\begin{abstract}

Given a cuspidal automorphic form $\pi$ on $\GL_2$, we study smoothed sums of the form
$\sum_{n\in\mathbb{N}} a_{\pi}(n^2+d)W(\frac{n}{Y})$. The error term we get is sharp in that it is uniform in both $d$ and $Y$
and depends directly on bounds towards Ramanujan for forms of half-integral weight and Selberg eigenvalue conjecture.
Moreover, we identify (at least in the case where the level is square-free) the main term as a simple factor times the
residue as $s=1$ of the symmetric square L-function $L(s,\Msym^2\pi)$. In particular there is no main term unless $d>0$ and $\pi$ is a dihedral form. 

\end{abstract}

\maketitle

\tableofcontents

\section{Introduction and statement of results}\label{sec:intro} Understanding averages of arithmetic functions over sequences that are sparse is a subject that has
attracted much historical interest. In this paper, we look at sums of Hecke eigenvalues of $\GL_2$-representations $\pi$ over quadratic
progressions $n\mapsto n^2+d$. That is, sums of the type:
\begin{equation}\label{intro:aim}
\displaystyle\sum_{n\leq X} a_{\pi}(n^2 + d),
\end{equation}
for various $d\in \BmZ$ and $X>0$.
Here $a_\pi$ stands for the normalized Fourier coefficients of $\pi$.
Because of the motivation and analogy with the problem of estimating shifted convolution sums (when the polynomial $n^2+d$ is split it reduces to shifted convolution sums) we shall call sums such as~\eqref{intro:aim} \Lquote{a non-split sum}.

Sums over polynomial progressions of the type~\eqref{intro:aim}  were first examined by Erd\"os for the divisor function $\tau(n) =
\displaystyle\sum_{m|n} 1$. For the divisor function along a fixed quadratic progression, it has been known%
\footnote{This is for instance mentioned without proof in the introduction of Bellman~\cite{Bellman}, see also Exercise~3 in~\cite{book:IK04}*{\S1.5}.}
that an asymptotic formula of the form
\begin{equation}\label{dirichlet}
\sum_{n\le X}
\tau(n^2+1)
\sim
\frac{3}{\pi} X\log X,
\qtext{as $X\to \infty$}
\end{equation}
may be derived with the Dirichlet hyperbola principle.

Hooley~\cite{Hool63} was the first to obtain a power saving in the remainder term of~\eqref{dirichlet}. The exponent was later improved by Bykovskii~\cite{Bykovskii} and Deshouillers-Iwaniec~\cite{DeI:82}. Sarnak~\cite{Sarn84} gave an interpretation of those results in the context of automorphic forms, see~\S\ref{sec:intro:works} below.

Recently, Blomer~\cite{Blom08} investigated~\eqref{intro:aim} for a fixed holomorphic modular form $f$ and a fixed $d$ and showed the following asymptotic evaluation
\begin{equation}\label{intro:blomer}
\displaystyle\sum_{n\leq X} a_f(n^2+ d) = c_f
X + O_{f,d}(X^\frac{6}{7}).
\end{equation}

Independently, the first named author was led to similar sums in~\cite{Temp:non-split} when $X$ is about $d^\frac{1}{2}$ and $d>0$. In that case the question is intimately related to the equidistribution of Heegner points. A precise estimate for those shorter sums has been derived in~\cite{Temp:non-split}. Applications to moments of $L$-functions are described in~\S\ref{sec:intro:app} below.

Our main purpose in this article is to improve and extend the above results in several aspects. Our main result is Theorem~\ref{th:main} below. We shall establish uniformity in both the $d$ and $X$ aspects simultaneously, thereby unifying the results in~\cite{Blom08} and~\cite{Temp:non-split}. 

We treat both the $d>0$ and $d<0$ cases and have removed several technical conditions from previous papers, and reached a reasonably good level of generality (the representation $\pi$ has arbitrary infinity type, there is no condition on the level). This requires adapting some advanced techniques from the shifted convolution problem as we shall discuss below. Also we shall analyse further when the main term can occur (in many cases $c_f=0$), and connect to the possible pole of $L(s,\Msym^2 f)$ and the theory of the Shimura integral. 

The quality of our remainder term is sharp, in that it depends directly on the bound towards Selberg eigenvalue conjecture (which we denote by $0\le \theta \le 1/2$) and a subconvex exponent for the Fourier coefficients of half-integral automorphic forms (which we denote by $0\le \delta\le 1/4$). See~\S\ref{sec:intro:sub} for the precise definitions and the current numerical records. It is interesting to note that these exponents are of a rather different nature. The Selberg conjecture $\theta=0$ would follow from Langlands functoriality conjecture, while $\delta=0$ is equivalent to a Lindel\"of hypothesis in the level aspect and would follow from the GRH.

\begin{theorem}\label{th:main}
 Let $\pi$ be a $\GL_2$ automorphic cuspidal
representation and denote by $a_{\pi}(n)$ the Dirichlet coefficients of the $L$-function $L(s,\pi)$. Let $W$ be a smooth function of compact support inside $(1,2)$ with $W^{(i)}\ll 1$ for all $i\ge 0$. Then for all $\epsilon>0$,
\begin{equation}\label{eq:th:main}
 \sum_{n\ge 0} a_\pi(n^2+d) W(\frac{n^2+d}{Y}) = 
 I(W)M_{\pi,d} \sqrt{Y}
  + O_{\pi,\epsilon}(Y^{1/4+\epsilon} \abs{d}^{\delta} (\frac{Y}{\abs{d}})^{\theta/2}),
\end{equation}
uniformly, for all integers $d\neq 0$ and reals $Y\gg \abs{d}$. The implied multiplicative constant in the remainder term depends only on $\pi$ and $\epsilon>0$. The multiplicative constant $M_{\pi,d}$ is non-zero only if $\pi$ is dihedral and $d>0$.
\end{theorem}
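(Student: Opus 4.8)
The plan is to realize the non-split sum as a Fourier coefficient of a half-integral weight automorphic function and then apply spectral theory. First I would fix an automorphic realization $\phi$ of $\pi$ on $\Gamma_0(N)\backslash\mathbb{H}$, with Fourier expansion carrying the coefficients $a_\pi(m)$ against the appropriate Whittaker functions (allowing $\phi$ to be a Maass or holomorphic form according to the infinity type of $\pi$), and introduce the unary theta series $\theta(z)=\sum_{n}e(n^2 z)$ of weight $\tfrac12$. The product $\phi(z)\overline{\theta(z)}$, suitably normalized by a power of $y=\MIm(z)$, is a $\Gamma$-automorphic function of half-integral weight whose $d$-th Fourier coefficient at height $y$ equals $\sum_{n}a_\pi(n^2+d)$ times an archimedean factor depending on $n^2+d$, $n^2$ and $y$. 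Integrating this Fourier coefficient against a kernel $k(y)$ concentrated near $y\sim Y^{-1/2}$ reproduces, up to a controllable archimedean transform, the smoothly weighted sum on the left-hand side of~\eqref{eq:th:main}; this is the identity that converts the arithmetic sum into a spectral problem.

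Next I would spectrally decompose the automorphic function $\Psi:=y^{1/2}\phi\,\overline{\theta}$ in $L^2$ of half-integral weight on $\Gamma_0(4N)$, writing $\Psi$ as its residual/constant part, plus the Eisenstein contribution, plus $\sum_j\langle\Psi,u_j\rangle u_j$ over an orthonormal basis of half-integral weight Maass cusp forms $u_j$ with spectral parameters $t_j$. The $d$-th Fourier coefficient of $\Psi$ then becomes the corresponding spectral sum, each cuspidal term being the $d$-th Fourier coefficient $\rho_j(d)$ of $u_j$ weighted by $\langle\Psi,u_j\rangle$ and by an archimedean transform of $k$.

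The \emph{main term} arises from the Eisenstein/residual part. Unfolding the inner products $\langle\Psi,E(\cdot,s)\rangle$ via Rankin--Selberg, and evaluating the resulting Shimura integral of $\phi$ against the weight-$\tfrac12$ Eisenstein series, produces an $L$-function whose only possible pole at $s=1$ is that of $L(s,\Msym^2\pi)$, up to archimedean and finite factors. Since this pole is present precisely when $\pi$ is dihedral, and since the relevant Eisenstein Fourier coefficient is nonzero only when $n^2+d$ lies in the image of the norm form of $\BmQ(\sqrt{-d})$ --- which forces $d>0$ --- the residue yields exactly $I(W)M_{\pi,d}\sqrt{Y}$ with $M_{\pi,d}\neq 0$ only in the stated cases; otherwise the contour shifts past $s=1$ with no residue.

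Finally, the \emph{error term} comes from the cuspidal (and non-residual continuous) spectrum, and this is where the two arithmetic inputs enter and where the main difficulty lies. Each cuspidal term contributes $\langle\Psi,u_j\rangle\,\rho_j(d)$ times an archimedean weight; I would bound $\rho_j(d)\ll_\epsilon \abs{d}^{\delta+\epsilon}$ using the subconvex exponent for Fourier coefficients of half-integral weight forms (via Waldspurger, this is subconvexity in the $d$-aspect for $L(\tfrac12,\pi_j\otimes\chi_d)$), and control $\langle\Psi,u_j\rangle$ together with the spectral average by the spectral large sieve for half-integral weight forms. The archimedean transform localizes $t_j$ to a bounded range and, for the exceptional eigenvalues $\lambda_j=\tfrac14-s_j^2$ with $s_j\le\theta$, produces a factor of size $(Y/\abs{d})^{\theta/2}$, the worst case being governed by the Selberg bound $\theta$; the remaining $Y^{1/4+\epsilon}$ is the square-root-cancellation size of a sum of length $\sqrt{Y}$. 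The hard part is to carry out the archimedean analysis --- the transforms of $W$ against Whittaker functions of varying weight and spectral parameter --- together with the large-sieve estimate \emph{uniformly in both $d$ and $Y$ simultaneously}, and to treat the Eisenstein Fourier coefficients uniformly, so that the final bound $Y^{1/4+\epsilon}\abs{d}^{\delta}(Y/\abs{d})^{\theta/2}$ holds in the full range $Y\gg\abs{d}$.
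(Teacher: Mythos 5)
Your proposal follows the same global architecture as the paper: realize the sum as the $d$-th Fourier coefficient of $\phi\overline{\theta}$, expand spectrally in half-integral weight, identify the main term with the residual spectrum via the Shimura integral and the pole of $L(s,\Msym^2\pi)$, and bound the cuspidal contribution using the exponents $\delta$ and $\theta$. However, there are genuine gaps precisely at the points you yourself flag as ``the hard part.'' First, working with $\Psi=y^{1/2}\phi\overline{\theta}$ at a \emph{fixed} half-integral weight on the upper half-plane and integrating the Fourier coefficient against a kernel $k(y)$ cannot produce a general smooth weight $W(\frac{n^2+d}{Y})$: the $n$-dependence enters both through $n^2+d$ (from $\phi$) and through $n^2$ (from $\theta$), and at fixed weight the archimedean factor is forced to be essentially $e^{-(2n^2+d)y}$ times a power, which only yields the theorem for a restricted class of test functions. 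The paper resolves this by moving to the metaplectic cover of $\SL_2(\BmR)$ and using the surjectivity of the Kirillov model to choose a smooth, non-$K$-finite vector $\phi\in V_\pi$ whose Whittaker function is engineered (as in~\eqref{choiceWphi}) so that the theta factor cancels exactly and the unipotent integral returns $Y^{-3/4}\sum_n a(n^2+d)W(\frac{n^2+d}{Y})$ on the nose; this step is also what makes the Maass case tractable, and your proposal has no substitute for it.

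Second, your claim that the archimedean transform ``localizes $t_j$ to a bounded range'' is not correct: the spectral sum runs over all eigenvalues (and, on the group, all weights), and the Whittaker and Duke--Iwaniec bounds grow polynomially in $t_j$ and the weight. Convergence must come from arbitrary polynomial decay of the coefficients $\langle\Psi,u_j\rangle$, which the paper obtains from Sobolev norms (iterated Laplacian plus Plancherel, together with a height-function argument to handle the fact that $\theta$ is not cuspidal) followed by Cauchy--Schwarz; a ``spectral large sieve for half-integral weight forms'' in the required $(d,Y)$-uniformity is not an available off-the-shelf tool and is not what is needed. Relatedly, the uniform bound $W_{p,ir}(y)\ll (\abs{p}+\abs{r}+1)^{A}y^{1/2-\theta/2-\epsilon}$ for \emph{real} (half-integral) weights $p$ is a nontrivial new ingredient --- the existing proofs only cover $p\in\BmZ$ --- and the paper devotes an appendix to it; your proposal implicitly relies on such a bound without supplying it. Finally, a smaller point: the vanishing of the main term for $d<0$ comes from the Serre--Stark theorem that the residual spectrum is spanned by unary theta series $y^{1/4}\sum_n\psi(n)e(tn^2z)$, whose $d$-th Fourier coefficients vanish unless $d=tn^2>0$; it is not a statement about $n^2+d$ being represented by a norm form.
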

\remarks \begin{enumerate}[(i)]

\item  When $d>0$ we may suppose $Y\ge d/2$ otherwise the sum in the left-hand side of~\eqref{eq:th:main} is zero.

\item This generalizes the result by Blomer~\cite{Blom08} recalled above which corresponds to the case when $\pi$ is holomorphic and $d$ is fixed. With some extra effort it should be possible, as in~\cite{Blom08}, to replace the sequence $n\mapsto n^2+d$ by the sequence $n\mapsto f(n)$ for a quadratic polynomial $f$. Blomer noted that the constant $M_{\pi,d}$ is non-zero only when the weight of $\pi$ is odd and $d>0$. This is a special case of our result because dihedral holomorphic forms have odd weight.

\item Under the Lindel\"of hypothesis and the Selberg eigenvalue conjecture we would have $\delta=\theta=0$, see~\S\ref{sec:intro:sub} below. The error term would then be $Y^{1/4+\epsilon}$ which is roughly the square-root of the length of summation.

\item When $d<0$ the assumption $Y\gg \abs{d}$ is slightly restrictive. We believe it would be possible to relax the assumption slightly, at the cost of a worse remainder term. A similar phenomenon in the split shifted convolution problem occurs in that range as well, see the discussion in~\S\ref{sec:intro:works} below. 

It is also interesting to discuss the significance of the assumption $Y\gg \abs{d}$ in analysing briefly the situation when $Y$ is significantly smaller than $\abs{d}$. The first observation is that this is rather subtle. If $\frac{Y}{\abs{d}}$ becomes significantly smaller than one, the exponent $\theta/2$ in the remainder term of~\eqref{eq:th:main} would act in the wrong direction. Note also that the length of summation in~\eqref{eq:th:main} becomes roughly $\frac{Y}{\abs{d}^{1/2}}$ instead of $\sqrt{Y}$ when $Y$ is large. The assumption $Y\gg \abs{d}$ occurs naturally in the proof because the multiplicative factor $e^{-d/Y}$ needs to remain bounded. A further subtlety concerns the asymptotics of Whittaker function, if $Y$ were significantly smaller than $\abs{d}$ we would enter in a different regime.
\end{enumerate}

For the sake of clarity we would like to have an estimate that is a direct analogue of~\eqref{dirichlet} and~\eqref{intro:blomer}. The following is a formal consequence of Theorem~\ref{th:main}.
\begin{corollary}\label{cor:main:X} Let $\pi$ be a $\GL_2$ automorphic cuspidal
representation with trivial central character. Let $V$ be a smooth function of compact support inside $(1,2)$ with $V^{(i)}\ll 1$ for all $i\ge 0$. For all integer $d>0$ and all $X\gg \abs{d}^{1/2}$,
\begin{equation}\label{eq:cor:main:X}
\sum_{n\ge 1} a_\pi(n^2+d) V(\frac{n}{X}) \ll_{\pi,\epsilon} X^{1/2+\epsilon}
d^\delta (1+\frac{X^2}{d})^{\theta/2}.
\end{equation}
\end{corollary}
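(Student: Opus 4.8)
The plan is to deduce the bound from Theorem~\ref{th:main} by writing the sharp cutoff $V(n/X)$, which is a function of $n$, as a bounded superposition of cutoffs $W((n^2+d)/Y)$, which are functions of $n^2+d$, and then summing the resulting error terms. Since $V$ is supported in $(1,2)$, the variable $n$ ranges over $(X,2X)$ and hence $m:=n^2+d$ ranges over $(X^2+d,4X^2+d)$; because $X\gg\abs{d}^{1/2}$ we have $d\ll X^2$, so this interval lies in $(X^2,5X^2)$ and $m\asymp X^2$ throughout. First I would set $U(m):=V(\sqrt{m-d}/X)$, so that $V(n/X)=U(n^2+d)$ for $n\ge 0$. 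On the support one has $m-d\asymp X^2$, so $\sqrt{m-d}$ is smooth there and a direct computation gives $U^{(j)}(m)\ll_j X^{-2j}$ for every $j\ge 0$; thus $U$ is a smooth bump adapted to the scale $X^2$.

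Next I would chop $U$ with a fixed smooth dyadic partition of unity. As the support of $U$ spans a factor of at most $5$ in $m$, only $O(1)$ dyadic scales $Y\asymp X^2$ occur, and I can write $U=\sum_j U_j$ with each $U_j$ supported in some $(Y_j,2Y_j)$, $Y_j\asymp X^2$, and $U_j^{(i)}(m)\ll_i Y_j^{-i}$. Rescaling, $W_j(t):=U_j(Y_j t)$ is supported in $(1,2)$ and satisfies $W_j^{(i)}\ll_i 1$ uniformly in $d,X,j$, the bounds depending only on the fixed derivative bounds for $V$ and for the partition. Hence Theorem~\ref{th:main} applies to each piece with an implied constant depending only on $\pi$ and $\epsilon$, and the hypothesis $Y_j\gg\abs{d}$ holds since $X^2\gg d$. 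Summing over the $O(1)$ values of $j$,
\[
\sum_{n\ge 1} a_\pi(n^2+d)V(n/X)=\sum_j\Big(I(W_j)M_{\pi,d}\sqrt{Y_j}+O_{\pi,\epsilon}\big(Y_j^{1/4+\epsilon}\abs{d}^{\delta}(Y_j/\abs{d})^{\theta/2}\big)\Big),
\]
where the $n=0$ term never appears since $d<Y_j$ places it outside the support of $W_j$.

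It remains to dispose of the two contributions. For the error I would insert $Y_j\asymp X^2$, so $Y_j^{1/4+\epsilon}\asymp X^{1/2+2\epsilon}$ and $(Y_j/d)^{\theta/2}\asymp(X^2/d)^{\theta/2}$; with only $O(1)$ pieces and $X\gg\sqrt d$ forcing $(1+X^2/d)^{\theta/2}\asymp(X^2/d)^{\theta/2}$, the total error is $\ll_{\pi,\epsilon}X^{1/2+\epsilon}d^{\delta}(1+X^2/d)^{\theta/2}$ after relabelling $\epsilon$, which is precisely the claimed bound. The main term is where the trivial central character enters: each summand is $I(W_j)M_{\pi,d}\sqrt{Y_j}\asymp M_{\pi,d}X$, and a term of size $X$ would dominate the target (e.g.\ when $\theta=\delta=0$), so one genuinely needs $M_{\pi,d}=0$. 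By Theorem~\ref{th:main}, $M_{\pi,d}$ is nonzero only if $d>0$ and $\pi$ is dihedral, and one checks from the explicit shape of $M_{\pi,d}$ established in the body (the residue of $L(s,\Msym^2\pi)$ together with its $d$-dependent factor) that a nonzero contribution for a positive shift forces the attached quadratic field to be $\BmQ(\sqrt{-d})$, for which $n^2+d$ is the norm form. Such a $\pi$ is a CM holomorphic form attached to an imaginary quadratic field; being a contributing dihedral form it has odd weight, hence an odd, in particular nontrivial, central character. As we assume the central character is trivial, this case is excluded, so $M_{\pi,d}=0$ and the main term vanishes identically.

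The reduction itself is routine bookkeeping; the one substantive step, and the place I expect to need the most care, is the vanishing of $M_{\pi,d}$, which rests on the explicit evaluation of the main term in the body and on the fact that the only dihedral forms giving a nonzero contribution for $d>0$ are odd-weight CM holomorphic forms, whose central characters are necessarily nontrivial.
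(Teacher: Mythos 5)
Your proposal is correct and matches what the paper intends: the paper treats this corollary as a ``formal consequence'' of Theorem~\ref{th:main}, obtained exactly by the substitution $m=n^2+d$, a dyadic partition of unity into $O(1)$ pieces at scale $Y\asymp X^2$ (so that $Y\gg\abs{d}$ is automatic from $X\gg\abs{d}^{1/2}$), and the observation that $M_{\pi,d}=0$ because a trivial central character rules out the dihedral case. The only place where you deviate is the vanishing of the main term: your detour through ``the attached quadratic field must be $\BmQ(\sqrt{-d})$, hence $\pi$ is odd-weight CM'' is only really substantiated in the paper for square-free level holomorphic forms, whereas the clean and fully general argument — which follows from exactly the input you cite — is that $M_{\pi,d}$ is a multiple of $\mathrm{Res}_{s=1}L(s,\Msym^2\pi)$, and for trivial central character $L(s,\pi\times\pi)=\zeta(s)L(s,\Msym^2\pi)$ forces $L(s,\Msym^2\pi)$ to be regular at $s=1$, so the residue vanishes.
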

\remarks \begin{enumerate}[(i)]
         \item When $\pi$ is holomorphic, the trivial bound obtained from the triangle inequality is $X^{1+\epsilon}$ because of Deligne's bound. When $\pi$ is non-holomorphic, the trivial upper bound would be $X^{1+\theta}$. In fact the estimate~\eqref{eq:cor:main:X} also gains on the bound $X^{1+\epsilon}$ when $\pi$ is Maass, so we always view $X^{1+\epsilon}$ as \Lquote{the trivial bound} in the discussions below. The estimate~\eqref{eq:cor:main:X} exhibits cancellation compared to the trivial bound when $X$ is large enough. For instance it exhibits cancellations when $X>d^{1/2+\epsilon}$, which is a natural barrier, and the natural range of uniformity required for applications.
        
\item  There is no main term because $M_{\pi,d}=0$ from the fact that $\pi$ cannot be dihedral since its central character is trivial. It is not difficult to derive as well an asymptotic in the general case of non-trivial central character and $d<0$, in which case there might be a main term when $M_{\pi,d}\neq 0$.

\item In the region when $X$ is much larger than $d$, the bound becomes $X^{1/2+\theta}$. For instance when $d$ is fixed we achieve the same numerical exponent as in Blomer~\cite{Blom08}*{Theorem~2}.

\item  Assuming the Selberg eigenvalue conjecture ($\theta=0$) and the Lindel\"of hypothesis in the level aspect ($\delta=0$), the upper bound in~\eqref{eq:cor:main:X} would be $X^{1/2+\epsilon}$.

\item One can go beyond $X\gg \abs{d}^{1/2}$ by being more flexible with the function $W$ in the Theorem~\ref{th:main}. In particular one can make the error term in~\eqref{eq:th:main} depend on a sufficiently large Sobolev norm $\pnorm{W}_A$ as in~\cite{BH08}*{\S2.3}. Then one can go down to $X\gg \abs{d}^{1/2-\eta}$ where $\eta>0$ is inversely proportional to $A$. In that way we would recover the main result of~\cite{Temp:non-split}. 
\end{enumerate}

Because of our soft treatment (that is, we avoid using complicated transforms by exploiting the framework of representation theory), we get the uniformity in both the $d$ and $X$ aspects simultaneously (or $Y$ in the context of Theorem~\ref{th:main}). We continue this introduction with an important application to moments of $L$-functions, some discussions on the bounds $\theta$ and $\delta$, a comparison to the split shifted convolution problem, and a detailed outline of proofs and the structure of the article. 

\subsection{Square-free level and holomorphic forms}

We record here the exact form of $I(W)$ and $M_{\pi,d}$ in the case where $\pi$ is a discrete series representation corresponding to a holomorphic form
of weight $K$, with square-free level.
\begin{theorem}
 Let $\pi$ be a $\GL_2$ automorphic cuspidal
representation 	corresponding to a holomorphic form of weight $K$, nebentypus $\chi$ and square-free level $N$, and denote by $a_{\pi}(n)$ denote the Dirichlet coefficients of the $L$-function $L(s,\pi)$. Let $W$ be a smooth function of compact support inside $(1,2)$ with $W^{(i)}\ll 1$ for all $i\ge 0$. Then for all $\epsilon>0$,
\begin{equation}\label{eq:th:hol}
 \sum_{n\ge 0} a_\pi(n^2+d) W(\frac{n^2+d}{Y}) = 
 I(W)M_{\pi,d} \sqrt{Y}
  + O_{\pi,\epsilon}(Y^{1/4+\epsilon} \abs{d}^{\delta} (\frac{Y}{\abs{d}})^{\theta/2}),
\end{equation}

Where $M_{\pi,d}$ is 0 unless $d$ is $N$ times a perfect square, $\chi=\chi_{4N}$ and $K\notin2\Z$, in which case
\[M_{\pi,d}= 
\frac{2^{K/2}(4\pi)^{1/4}\zeta^{-1}(2)}{\Gamma(3/4-k/2)}Res_{s=1}L(s,\Msym^2\pi)
\displaystyle\prod_{i=1}^{\frac{K-1}{2}}\Bigl(i(i-1/2)\Bigr)^{-1}\] and \[I(W)=\int_{0}^{\infty}W(y)y^{\frac12}\frac{dy}{y}.\]
\end{theorem}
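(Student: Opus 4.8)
The plan is to revisit the main-term extraction in the proof of Theorem~\ref{th:main} and make every factor explicit for the holomorphic discrete series of weight $K$. Theorem~\ref{th:main} already supplies the error term and tells us that $M_{\pi,d}$ vanishes unless $\pi$ is dihedral and $d>0$, so the whole task reduces to evaluating the main term in that surviving case. That main term arises as a residue --- the residue at the edge of the continuous spectrum in the Rankin--Selberg unfolding of $\sum_n a_\pi(n^2+d)W(\cdots)$ against a weight-$\tfrac12$ theta series --- and it factors as an archimedean integral, which will yield $I(W)$ together with the Gamma factors, times a global arithmetic constant, which will produce $\mathrm{Res}_{s=1}L(s,\Msym^2\pi)$, the power of $\zeta(2)$, and the arithmetic conditions on $d$, $\chi$ and the parity of $K$.

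First I would pin down the non-vanishing locus. For $\pi$ holomorphic of square-free level $N$, being dihedral means $\pi$ is automorphically induced from a Hecke character $\psi$ of an imaginary quadratic field $K_0$; the field is imaginary because the surviving case is $d>0$, and the shift condition $d=Nm^2$ makes $n^2+d=n^2+Nm^2=\mathrm{Nm}_{K_0/\mathbb{Q}}(n+m\sqrt{-N})$ a value of the norm form of $K_0=\mathbb{Q}(\sqrt{-N})$. Induction from $K_0$ forces the nebentypus to be the associated quadratic character $\chi_{4N}$ and forces the weight to be odd, recovering $\chi=\chi_{4N}$ and $K\notin 2\mathbb{Z}$. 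The symmetric square $L$-function $L(s,\Msym^2\pi)$ has a pole at $s=1$ exactly for dihedral $\pi$, and I would identify its residue with the period (the overlap with the residual datum) that governs the main term, so that it enters precisely as the claimed global factor.

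Next I would carry out the archimedean computation. The weight-$K$ holomorphic Whittaker function $y^{K/2}e^{-2\pi y}$ is paired against the half-integral weight local Whittaker/Bessel factor, and after taking the Mellin transform in the $Y$-aspect the test-function dependence collapses to $I(W)=\int_0^\infty W(y)y^{1/2}\,\frac{dy}{y}$; the weight $y^{1/2}$ reflects the $\sqrt{Y}$ normalization, since $n\approx\sqrt{Yy}$ gives $dn\asymp\sqrt{Y}\,y^{-1/2}\,dy$. The remaining beta-type integral of Gamma functions evaluates to the explicit quotient $\Gamma(3/4-K/2)^{-1}\prod_{i=1}^{(K-1)/2}\bigl(i(i-1/2)\bigr)^{-1}$, while the normalizations of the theta series, of the Eisenstein residue, and of the half-integral weight Petersson inner product contribute the factors $2^{K/2}$, $(4\pi)^{1/4}$ and $\zeta^{-1}(2)$.

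The main obstacle will be the precise bookkeeping of the archimedean integral and of the normalizing constants. Getting the Gamma factors and the powers of $2$, $\pi$ and $\zeta(2)$ exactly right demands tracking every measure normalization, the normalization of the theta series, and the conversion between $L^2$ and Petersson inner products consistently with the conventions fixed in the proof of Theorem~\ref{th:main}, since a single mismatched constant propagates through the entire formula. A secondary but genuine difficulty is translating the abstract dihedral residue of Theorem~\ref{th:main} into the concrete Shimura-integral statement: matching $\mathrm{Res}_{s=1}L(s,\Msym^2\pi)$ to the period and checking that the conditions $d=Nm^2$, $\chi=\chi_{4N}$ and $K$ odd cut out exactly the locus where it is nonzero.
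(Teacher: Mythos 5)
Your overall skeleton matches the paper's: the main term is the contribution of the residual spectrum of weight $K-\tfrac12$ in the expansion of $f\overline{\theta}$, evaluated by realizing the residual form as the residue at $s=3/4$ of the half-integral weight Eisenstein series $E_{N,k}(s,z)$, unfolding $\langle E_{N,k}(s,\cdot),\overline{f}\theta\rangle$ into Gamma factors times $\zeta(4s-1)^{-1}L(2s-\tfrac12,\Msym^2 f)$, and taking the residue to produce $\mathrm{Res}_{s=1}L(s,\Msym^2 f)$; the archimedean bookkeeping you describe (Mellin transform in the $Y$-aspect giving $I(W)$, raising-operator normalizations, the $L^2$-norm of $\theta_N$) is exactly what the paper carries out in \S\ref{sec:hol}.

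There is, however, a genuine gap in how you propose to establish the vanishing conditions. The conditions ``$d\in N\mathbb{Z}^2$, $\chi=\chi_{4N}$, $K\notin 2\mathbb{Z}$'' are \emph{not} consequences of $\pi$ being dihedral, and your norm-form heuristic cannot deliver them. In the paper they come from the structure of the residual spectrum of half-integral weight: Duke's computation shows there is no residual spectrum in weight $3/2$, which forces $K$ odd; the Serre--Stark classification shows that for square-free $N$ the weight-$1/2$ residual spectrum is nonzero only when $\chi=\chi_{4N}$, in which case it is one-dimensional and spanned by $\theta_N(z)=y^{1/4}\sum_{n}e(Nn^2z)$; and $d\in N\mathbb{Z}^2$ is simply the Fourier support of $\theta_N$. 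These conditions are logically independent of dihedrality, which enters only through the factor $\mathrm{Res}_{s=1}L(s,\Msym^2\pi)$ coming from the period $\langle f\overline{\theta},\theta_N\rangle$: a dihedral $f$ with $d\notin N\mathbb{Z}^2$ still has vanishing main term, and a non-dihedral $f$ satisfying all three arithmetic conditions has vanishing main term because the residue is zero. Your plan reverses this logic (deriving the arithmetic conditions from dihedrality, and even pinning the CM field to $\mathbb{Q}(\sqrt{-N})$, which is unjustified --- the nebentypus and field of induction are not determined this way), so without the Serre--Stark input the ``$M_{\pi,d}=0$ unless\dots'' half of the statement is not proved. A minor further caveat: quoting Theorem~\ref{th:main} for the error term is legitimate, but note that the Poincar\'e-series argument of \S\ref{sec:hol} by itself only covers test functions whose Mellin transform decays exponentially on vertical lines, so the general $W$ of the statement really does require the argument of \S\ref{sec:pf}.
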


\begin{remark} As the proof given in \S\ref{sec:hol} shows, given a newform $\phi$, the main term $M_{\pi,d}$ can always be expressed, up to some elementary
factors depending on the infinity type, nebentypus $\chi$, level $N$ and weight $K$ of $\pi$, as the inner product of $\phi\overline\theta(z)$ with a theta function 
$\theta_{K,N,\chi}(z)$ of half-integral weight in the residual spectrum depending only on the weight, level, and nebentypus of $\pi$. The latter restriction on $d$ being $N$ times a perfect square comes from a classical theorem of Serre and Stark, which says that the residual spectrum is spanned purely by theta functions.
\end{remark}

\subsection{Applications}\label{sec:intro:app} 
The Theorem~\ref{th:main} arises in the study of moments of $L$-functions associated to quadratic number fields. We recall that moments of $L$-functions in families are a central tool to the problem of non-vanishing and subconvexity for special values.

Let $D<0$ be the discriminant of an imaginary quadratic field $K=\BmQ(\sqrt{D})$. Let $\CmO_D$ be the ring of integers and $\MCl_D$ be the ideal class group. We let $h(D)=\abs{\MCl_D}$ be the order of the class group (class number). To unitary characters $\chi\in \clchar$ one may associate interesting $L$-functions. For each discriminant $D$, we consider the $L$-functions associated to the various $\chi\in \clchar$ together and then form various averages. For a list and comparison of those possible $L$-functions, see~\cite{Temp:non-split}*{\S1.2}.

The Theorem~\ref{th:main} is particularly relevant to one type of family of $L$-functions. Recall the fixed $\GL(2)$ automorphic cusp form $\pi$ and assume its central character to be trivial. The $L$-function $L(s,\pi\times \chi)$ may be defined via the Rankin-Selberg method. We choose the unitary normalization so that the functional equation links $L(s,\pi\times \chi)$ with $L(1-s,\pi \times \chi)$. We recall that $L(s,\pi\times \chi)$ is self-dual, of degree four and the sign of the functional equation is $\pm 1$. Under some general assumptions this sign is independent of $\chi$. Let $s=\Mdemi+it$ be on the critical line. The first named author investigated in~\cite{Temp:non-split} the asymptotic behavior of the first moment
\begin{equation}\label{intro:moment}
\frac{1}{h(D)}
\sum_{\chi \in \clchar}
L(\Mdemi+it,\pi \times \chi),
\qtext{as $D\to -\infty$.}
\end{equation}
When the sign of the functional equation is $-1$ one may consider the central derivative $L'(\Mdemi,\pi \times \chi)$ as well. 

Establishing a good error term in the sums~\eqref{eq:th:main} seems to be the most efficient method to handle the moments~\eqref{intro:moment}. Indeed the relationship between Theorem~\ref{th:main} and the moments~\eqref{intro:moment} is as follows. Up to some multiplicative factors, $L(s,\pi\times \chi)$ is closely related to the Dirichlet series
\begin{equation}\label{intro:dseries}
\sum_{\Fma \subset \CmO_D} \chi(\Fma) a_\pi(\TmN \Fma) \TmN \Fma^{-s}.
\end{equation}
Here $\Fma$ runs through the integral ideals of $K$. Let $Y$ be the analytic conductor of $\pi\times \chi$. When considering $L'(\Mdemi,\pi\times \chi)$the above series may be truncated to a weighted sum over the ideals $\Fma$ of norms $\TmN\Fma$ up to about $Y^{1/2+\epsilon}$ (approximate functional equation). This truncation is up to a negligible error term and $\epsilon>0$ may be chosen arbitrary small.

Now averaging~\eqref{intro:dseries} over $\chi\in \clchar$ has the effect of singling out the ideals $\Fma$ that are principal. A principal ideal is generated by an element $\frac{a+b\sqrt{D}}{2}$ in $\CmO_D$ with $a,b$ integers. Its norm equals $a^2+b^2\abs{D}$. We are therefore reduced to estimating the asymptotic of
\begin{equation}
\sum_{a,b\in \BmZ} a_\pi(a^2+b^2\abs{D}) W(\frac{a^2+b^2\abs{D}}{Y})
\end{equation}
for a certain truncation function $W$. The function $W$ does not exactly fulfill the requirement in Theorem~\ref{th:main}, but standard techniques enable to reduce to that case (partition of unity, dyadic subdivision). 

The case $b=0$ is special and yields the main term in the final asymptotic (see~\eqref{intro:derivative} below). When $b\neq 0$ we let $d=b^2\abs{D}>0$ and we recognize a sum similar to Theorem~\ref{th:main}. Note that $M_{\pi,d}=0$ because $\pi$ has trivial central character. After some more work which we omit here we obtain the following estimate
\begin{equation}\label{intro:derivative}
\frac{1}{h(D)}
\sum_{\chi \in \clchar}
L'(\Mdemi,\pi \times \chi)
=\alpha (\Mdemi \log \abs{D} + \frac{L'}{L}(1,\chi_D))
+ \beta + O(\abs{D}^{-\eta}),
\end{equation}
as $D\to -\infty$. Here $\alpha$, $\beta$ are explicit multiplicative factors. They behave essentially as constants, in the sense that we have that 
\begin{equation}
C^{-1}\le \alpha \le C,\quad  -C\le \beta \le C
\end{equation}
for some constant $C>1$ that depends only on $\pi$. 

This generalizes Theorem~2 in~\cite{Temp:non-split}. In~\cite{Temp:non-split} the asymptotic estimate~\eqref{intro:derivative} had been established (with the precise value of $\alpha$ and $\beta$ which we don't repeat here) under the following assumptions: the level of $\pi$ is required to be odd and square-free and the discriminant $D$ to be prime or almost prime.%
\footnote{Almost prime in the sense that the smallest prime divisor of $D$ be larger than $\abs{D}^\epsilon$ for some fixed $\epsilon>0$.} We are able to remove these assumptions here because of our main Theorem~\ref{th:main}.

An application of~\eqref{intro:derivative} is to quantitative non-vanishing. It may be proven that there are at least $\gg \abs{D}^\eta$ characters $\chi\in \clchar$ such that $L'(\Mdemi.\pi \times \chi)$ is nonzero. Here $\eta>0$ is an absolute constant. See also~\cite{Temp:height} for an alternative approach to this non-vanishing question.

\subsection{The exponents \texorpdfstring{$\theta$}{theta} and \texorpdfstring{$\delta$}{delta}}\label{sec:intro:sub} We recall briefly in this subsection what is currently known on bounds towards the Selberg eigenvalue conjecture and bounds for Fourier coefficients of half-integral modular forms.

Let $\phi$ be an Hecke-Maass cusp form for $\Gamma_0(N)$. It is an eigenfunction of the Laplacian, with eigenvalue $\lambda=1/4+ r^2$, where $1/2\pm ir$ are the 
Satake parameters for $\phi$. Selberg conjectured that $\lambda \geq 1/4$, and proved a nontrivial bound $\lambda \geq 3/16$. The best bound known thus
far is $|\Im r| \leq 7/64$ obtained by Kim-Sarnak~\cite{Kim-Sarnak} (or equivalently $\lambda \geq \frac{975}{4096}$). Let $0\le \theta <1/2$ be such a numerical value towards the Selberg conjecture.

We also need bounds for Fourier coefficients of half-integer weight cusp forms. Specifically, fix a cusp form of half-integer weight $f_j$. Note that it is orthogonal to the residual spectrum, spanned by theta functions of one variable for which the Fourier coefficients have different sizes. Assuming for simplicity that $f_j$ is holomorphic of weight $k+1/2$, it has a Fourier expansion of the form $f_j(z)=\sum_{n>0}\rho_j(n)e(nz)$ valid in the upper half plane. Often called the trivial estimate, we have
$|\rho_j(n)|\leq |n|^{k/2-1/4}$. The first breakthrough in the subject was an improvement by Iwaniec (later generalized by Duke) to $\rho_j(n)\leq |n|^{k/2-2/7}$.
We shall define $\delta > 0$ to be the smallest known constant such that $|\rho_j(n)|\ll_{\epsilon} |n|^{k/2-1/2+\delta+\epsilon}$. 

It is known through work of Waldspurger, and later Kohnen-Zagier that $\delta$ is related to a subconvexity problem. Specifically, if $\phi_j$ is the
integer weight modular form related to $f_j$ through the Shimura-Shintani correspondence, then there is a formula relating $f_j(n)$ and 
$L(\phi_j\times\chi_n,1/2)$ where $\chi_n$ is the Jacobi symbol of modulus $n$. Thus, the Lindel\"of hypothesis (and therefore also GRH) would imply that one
can take $\delta=0$.
 
 One therefore expects that $\delta$ is a much harder constant to study than $\theta$, not being directly related to a spectral problem
(unlike the case for Fourier coefficients of integral weight modular forms).

The best known bound thus far for $\delta$ has been obtained in an appendix of Mao to the subconvexity estimate by Blomer--Harcos--Michel~\cite{BHM07}, and so currently one can take $\delta = \frac14-\frac{1}{16}(1-2\theta)$. 

\subsection{Previous works}\label{sec:intro:works} Before explaining the details of the proof of Theorem~\ref{th:main} we review some of the previous works related to this problem. We try to proceed by chronological order.

Hooley~\cite{Hool63} has obtained a power saving in the remainder term of~\eqref{dirichlet} and the exponent was later improved by Bykovskii~\cite{Bykovskii} and Deshouillers-Iwaniec~\cite{DeI:82}. The starting point of the argument in~\cites{Hool63,Bykovskii,DeI:82} is the convolution identity $\tau=1*1$. This method is not applicable to general coefficients $a_f$ of modular forms and it is necessary to develop other methods. We now proceed to explain how these sums can be interpreted in the context of automorphic forms, following Sarnak~\cite{Sarn84}.

It is observed~\cite{Sarn84}*{p.295} that the series
\begin{equation} \label{seriestau}
\sum_{n=1}^{\infty}
\frac{\tau(d+n^2)}{(d+n^2)^{s-1/4}}
\end{equation}
is equal up to some Gamma factors to the integral
\begin{equation} 
\int_{\Gamma_0(4)\SB \FmH}
\theta(z) \overline{E(1/2,z)}
P_d(s,z) \frac{dxdy}{y^2}
\end{equation}
where $\theta(z)=y^{1/4}\sum_{n\in \BmZ}^{} e(n^2 z)$ is the standard theta function, $E(s,z)$ is the standard Eisenstein series on $\SL_2(\BmZ)$ and $P_d(s,z)$ is the $d$th Poincar\'e series of weight $1/2$ on $\Gamma_0(4)$ (se also~\S\ref{sec:half:poincare}). As a consequence the poles of the series~\eqref{seriestau} correspond to the spectrum of the Laplace operator on the space of forms of weight $1/2$ on $\Gamma_0(4)\SB \FmH$. The method is not pushed further in~\cite{Sarn84} but see \S~\ref{sec:hol} and \S\ref{sec:intro:poincare} below.

The reader will find in~\cite{Sarn84} some further discussions on the spectrum and the Selberg's bounds. We recall that the spectrum in integral and half-integral weight is related by the Shimura correspondence (see \S\ref{sec:half:shimura} below). We repeat here a clever observation from~\cite{Sarn84}*{pp.301--304} that there is a nice way to see the Shimura correspondence from the analytic properties of the series~\eqref{seriestau}. When $d=-h^2$ is minus a perfect square, the quadratic polynomial $n^2+d$ splits and we are reduced to shifted convolution sums of $\tau(n-h)\tau(n+h)$. The corresponding Dirichlet series is then related, up to some Gamma factors, to the integral
\begin{equation} 
\int_{\Gamma \SB \FmH} \abs{E(1/2,z)}^2 P_h(s,z) \frac{dxdy}{y^2}
\end{equation}
where $P_h(s,z)$ is a Poincar\'e series of integral weight. Thus the poles are related to the spectrum of the Laplacian on integral weight forms. This is essentially the same series with the same set of poles and this yields therefore a relation between the spectrum on integral and half-integral weight forms. This observation is compatible with the Maass form version of the Shimura correspondence.

We briefly recall the argument of Blomer~\cite{Blom08} to establish~\eqref{intro:blomer}. It is assumed that $f$ is holomorphic and therefore can be written as a linear combination of Poincar\'e series. This allows to replace the coefficients $a_f$ by sums of Kloosterman sums. Then the $n$-sum is evaluated by Poisson summation. This produces sums of Kloosterman sums of half-integral weight which are handled with the Kuznetsov's trace formula.

The approach in Templier~\cite{Temp:non-split} is based on the $\delta$-symbol method of Duke--Friedlander--Iwaniec to detect the quadratic progression $n^2+d$. This produces similar kind of exponential sums. The analysis in~\cite{Temp:non-split} and~\cite{Blom08} differs because the relative size of $d$ and $X$ are different. The argument in~\cite{Temp:non-split} proceeds by using period formulas to relate sums of exponential sums to special values of $L$-functions and then conclude from the Duke--Iwaniec bound for coefficients for half-integral weight forms (see~\S\ref{sec:half:iwaniec}).

Both approaches in~\cite{Temp:non-split} and~\cite{Blom08} are not well-suited to achieve an optimized value of the exponents because they rely on a large number of transformations. It is one of the purpose of the present paper to improve on the quality of the exponents.

\subsection{Approach with Poincar\'e series} \label{sec:intro:poincare}
The main idea in the proof of Theorem~\ref{th:main} is to interpret the sum on the left as the $d$'th Fourier coefficient of $\phi\overline\theta(z)$,
where $\phi$ is a Maass form corresponding to a new vector in $\pi$ and $\theta(z)$ is a suitable theta function 
(it should be clear from context whether $\theta$ is a function or the bound towards Ramanujan). There are then
two methods we pursue:

In section~\ref{sec:hol}, we proceed classically and use Poincar\'e series to isolate the $d$'th Fourier coefficient. Specifically, 
by taking the Petersson inner product of $\phi\overline\theta$ with an appropriate
Poincar\'e series $P_d(s,z)$, we can form the Dirichlet series:
\begin{equation}\label{ex:dirichlet}
D(s):=\sum_{n} \frac{a_\pi(n^2+d)}{\abs{n^2+d}^s}
\end{equation}

We then spectrally expand $P_d(s,z)$ into Maass-Hecke eigenforms $\phi_j(z)$, to get an identity of the form

\begin{equation}\label{ex:spectral}
D(s)=\sum_{j} \langle P_d(s,z),\phi_j(z)\rangle\cdot\langle \phi\overline\theta(z),\phi_j(z)\rangle.
\end{equation}
 
To apply \eqref{ex:spectral}, we need to bound the terms $\langle \phi\overline\theta(z),\phi_j(z)\rangle$. This is the crucial triple product estimate established by 
Sarnak in~\cite{Sarn94}. Next, by unfolding the integral we see that each term $\langle P_d(s,z),\phi_j(z)\rangle$ can be continued to $\MRe(s)>\frac12+\theta$, 
unless $\phi_j(z)$ is a 1-variable theta function in which case we get a pole at $s=\frac{3}{4}$. Also, we get a pointwise bound on $\langle P_d(s,z),\phi_j(z)\rangle$
of some simple factor times the d-th Fourier coefficients $\rho_j(d)$. We thus get the meromorphic continuation of $D(s)$ to $\MRe(s)>\frac12+\theta$, with
a simple pole at $s=\frac34$ coming from the exceptional eigenvalues of the Laplacian, and a bound on $D(s)$ in terms of $|d|^{\delta}$. 

Finally, in order to translate this information to the sum 
$$\sum_{n\ge 0} a_\pi(n^2+d) W(\frac{n^2+d}{Y})$$ we form the integral 
\begin{equation}\label{ex:integral}
\int_{\MRe(s)=1}\widetilde{W}(s)D(s)\frac{ds}{2i\pi}.
\end{equation}

Besides having complications for Maass forms, a serious downside of this approach is that to ensure convergence of \eqref{ex:integral} we need to insist that 
$\widetilde{W}(s)$ decays exponentially on vertical strips. We note that this approach has been developed independently by Hansen~\cite{Hansen} with a view towards number fields.

The second approach that we take is to spectrally expand immediately and use Sobolev norms as in Blomer-Harcos~\cite{BH08}, sidestepping the use of Poincar\'e series
and triple product estimates. The purpose of section~\ref{sec:sketch} is to demonstrate the main idea without getting caught up in the technical details.
 As such, in section~\ref{sec:sketch} we again restrict to holomorphic $\phi$ but now we spectrally expand $\phi\overline\theta$ directly 
without resorting to Poincar\'e series. This gives an expansion of the form
\begin{equation}\label{ex:spectralholo}
\phi\overline\theta = \sum_{\tau\in RES}c_\tau\psi_\tau + \sum_{\tau\not\in RES} c_\tau\psi_\tau
\end{equation}
where the sum ranges over distinct Maass forms of an appropriate weight.  Rather than use triple product estimates to bound the coefficients $c_{\tau}$
we use Sobolev norms as in Blomer-Harcos to establish convergence of the spectral expansion. Letting $K$ correspond to the weight of $\pi$, we then equate 
$d$'th Fourier coefficients to get the identity:

\begin{equation}\label{ex:coef}
Y^{\frac{K}{2}+\frac14}\displaystyle\sum_{n\in\Z}(n^2+d)^{\frac{K-1}{2}}a_\pi(n^2+d)e^{-\frac{(2n^2 + d)}{Y}} = \displaystyle\sum_\tau c_\tau\rho_\tau(d) W_{\frac{K}{2}-\frac14,it_\tau}(\frac{d}{Y})
\end{equation} 

From equation \eqref{ex:coef} we immediately read off Theorem ~\ref{th:main} for the particular class of test functions $W(x)=e^{-ax}x^{\frac{K-1}{2}}$. 

As we work with a space of Maass forms of fixed weight on the upper half plane rather than on the group, the required uniformity on the asymptotics of
Whittaker functions and bounds for Fourier coefficients of Hecke-Maass forms are much easier to obtain. Of course, the two advantages of working on the group
are that we can handle Maass forms as well as holomorphic forms, and the test function $W(x)$ shows up naturally as a Whittaker coefficient from the Kirillov
model. 

\subsection{Methods of proof}
As explained above the method with Poincar\'e series has the drawback that it doesn't work as well for Maass forms, due to a lack of harmonics. 

To handle Maass forms we work directly on the group $\SL_2(\BmR)$ rather than the upper half plane, which provides the missing
harmonics since now one is allowed to vary the weight of vectors in the representation $\pi$. Harcos called attention to this issue in his thesis, and this solution recently appeared in Blomer-Harcos~\cite{BH09} for the classical split shifted convolution problem, using ideas of Venkatesh~\cite{Venk05}. Also, they bypass the need for triple product estimates by using Sobolev norms, allowing for a softer treatment.

In our case, we have to go to the metaplectic cover $\widetilde \SL_2(\BmR)$ since that's where the theta functions naturally live. 

We choose a vector $\phi\in \pi$ whose Whittaker function matches the test function $W$ in Theorem~\ref{th:main}. Then the proof develops in the same way as the holomorphic case described above. We arrive at an expression similar to~\eqref{ex:coef}.

Since there is not a good enough theory of Kirillov models on the metaplectic group, we have to derive all our bounds directly. We cannot go back and forth from the abstract Kirillov model to the Whittaker coefficients. 

We use a uniform bound for the Whittaker function (Proposition~\ref{prop:W-bound}) which is a key feature. We display the typical case corresponding to principal series. For $p,r\in \BmR$,
\begin{equation}\label{intro:W-bound}
\frac{W_{p,ir}(y)}{\Gamma(\Mdemi+p+ir)}
\ll_\epsilon 
(\abs{p}+\abs{r}+1)^A y^{1/2-\epsilon},
\quad 0<y<1.
\end{equation}
The estimate was previously known for $p\in \BmZ$ and the existing proof does not extend to half-integer weights. We establish~\eqref{intro:W-bound} in \S\ref{sec:appendix} for all real weights $p\in \BmR$ using an integral representation and shifting contours. 

The estimate~\eqref{intro:W-bound} is suitable for application to our problem. The polynomial growth in the weight $p$ and the eigenvalue $r$ is important in relation to Sobolev norms and the spectral expansion. The behavior as $y\to 0$ is an essential feature. For the complementary series the exponent $1/2$ is replaced by $(1-\theta)/2$ which is directly related to the exponents of $d$ and $Y$ in Theorem~\ref{th:main}.

\subsection{Structure of the article}

The paper is organized as follows. In sections~\ref{sec:gl2} and \ref{sec:half} we give the necessary background on automorphic forms of integral and
half-integral weight, as well as the metaplectic group. We also record some estimates for the Whittaker functions which will be needed
in section~\ref{sec:pf}. In section~\ref{sec:hol} we work out the proof for holomorphic forms using Poincar\'e series. In section~\ref{sec:sketch} we sketch the proof of
Theorem 1 for holomorphic forms using representation theory and Sobolev norms, trying to keep  the presentation as classical as
possible so as to give the flavor of the argument. In section~\ref{sec:pf} we work out the details of the general case by going to the
metaplectic group and using the methods of~\cite{BH09}. Finally, the section~\ref{sec:appendix} gives the proof of Proposition~\ref{prop:W-bound}.

\subsection{Acknowledgments} We thank the organizers of the conference \Lquote{Equidistribution on homogeneous spaces} at Ohio State University in June 2008, 
from which this work originated. We thank Gergely Harcos and Peter Sarnak for helpful discussions and encouragement. The first named author would like
to thank the Institute of Advanced Study for providing a stimulating environment in which to work and acknowledges support from the NSF under agreement No. DMS-0635607.

\section{Background on \texorpdfstring{$\GL_2$-}{}automorphic forms}\label{sec:gl2}

\subsection{Automorphic representations}\label{sec:gl2:rep} Let $G=\SL_2(\BmR)$ and $\Gamma=\Gamma_0(N)$ the Hecke congruence subgroup. Let $\chi$ be a Nebentypus character, namely a unitary congruence character on $\Gamma$, see~\eqref{def:autform}. We work on the space $\lcusp$ of cuspidal automorphic functions with Nebentypus $\chi$ acted upon by $G$ by translations from the right. Let $N\subset G$ be the unipotent subgroup $\Mdede{1}{\BmR}{0}{1}$. We recall that a cuspidal function $\phi$ is such that
\begin{equation}
\int_{\Gamma \cap N \SB N} \phi(ng) dn=0,
\qtext{for a.e. $g\in G$}.
\end{equation}
We have a Hilbert direct sum of irreducible cuspidal $G$-representations:
\begin{equation}
\lcusp=\bigoplus_{\pi} V_\pi.
\end{equation}
We arrange so that each space $V_\pi$ is preserved by the Hecke operators.

We let $K=SO(2)$ be the usual maximal compact subgroup. We may consider the restriction of the $G$-representation $V_\pi$ to $K$ and decompose further according to unitary characters of $K$:
\begin{equation}
V_\pi=\bigoplus_{k\in \BmZ}
V_{\pi,k}.
\end{equation}
More precisely let $k_\theta=\Mdede{\cos \theta}{\sin \theta}{-\sin \theta}{\cos \theta}$ be a generic rotation. A vector is of pure weight $k$, when $k_\theta$ acts on it through the character $\theta \mapsto e(k\theta)$.

We recall the classification of irreducible unitary representations of $\SL_2(\BmR)$, see e.g.~\cite{book:Lang:sl2}. The Casimir operator
\begin{equation}
\Delta = y^2(\frac{\partial^2}{\partial x^2}+
\frac{\partial^2}{\partial y^2})
- y\frac{\partial}{\partial x}
\frac{\partial}{\partial \theta}
\end{equation}
acts by a positive scalar
\begin{equation}
\lambda_\pi = \frac{1}{4}+r_\pi^2,
\end{equation}
on the whole irreducible space $V_\pi$. The discrete series correspond to $ir_\pi \in \Mdemi+\BmZ$; the principal series correspond to $r_\pi \in \BmR$ and the complementary series correspond to $r_\pi \in [-\frac{i}{2},\frac{i}{2}]$.

\subsection{Maass forms of weight zero}\label{sec:gl2:maass}
Let $\phi\in \lcusp$ be a Hecke-Maass cusp form of weight zero on $\Gamma=\Gamma_0(N)$. The line generated by $\phi$ is equal to $V_{\pi,0}$ for a unique automorphic cuspidal representation $\pi$. The weight zero Laplacian reads $
 \Delta=y^2(\Mpartialsquare{x}+\Mpartialsquare{y})$, and the following holds:
\begin{equation}
 \Delta \phi + (\frac{1}{4}+r^2) \phi =0.
\end{equation} 
The spectral parameter $r$ belongs to $\BmR \cup [-\frac{i}{2},\frac{i}{2}]$.
The Fourier expansion might be written in the following way:
\begin{equation}
 \phi(z)=2\sum_{n\not=0} \frac{a(n)}{\sqrt{n}} W_{0,ir}(4\pi|n|y)e(nx),
 \quad 
 z=x+iy\in\FmH.
\end{equation}
Here $\FmH=G/K$ is the upper-half plane. 

Since $\phi$ is fixed throughout the paper, we shall be omitting the subscript and denote simply by $a(n)$ the Fourier coefficients. It will not cause any ambiguity.
We normalize $\phi$ so that $a(1)=1$. By convention, $a(0)=0$. 

The Hecke bound reads $a(n)\ll |n|^{1/2}$. Under the Ramanujan conjecture $a(n)\ll
|n|^\epsilon$ would hold. We have $a(n)\ll |n|^\theta$, for all $\theta>7/64$ is achieved in~\cite{Kim-Sarnak}. 

More detailed properties of Whittaker functions will be recalled below. We recall the following (\cite{GR}*{(9.235)})
\footnote{in the notation $W_s$ of \cite{Iwaniec}*{(1.26)}, $W_{0,ir}(4\pi
y)e(x)
 =W_s(z)$, here we follow the notations in~\cite{KS}}
\begin{equation}
 W_{0,ir}(4\pi y)
 =2 y^{1/2} K_{ir}(2\pi y)
\end{equation}
where $K_{ir}$ is the $K$-Bessel function. Thus an equivalent expression for the Fourier expansion of $\phi$ is:
\begin{equation}
\phi(z)=y^{1/2}\sum_{n\neq 0} a(n)K_{ir}(2\pi|n|y)e(nx).
\end{equation}

When $\phi$ is a newvector, the $L$-function associated to $\pi$ is:
\begin{equation*}
 L(s,\phi):=\sum^\infty_{n=1} \frac{a(n)}{n^{s}}
\end{equation*}
We recall that the functional equation relates the value at
$s$ to the value at $1-s$. For more details see \cite{Michel-park}*{p.41}
and \cites{KS,LRS,book:Gelbart}.

\remark Another interesting choice for $\phi$ would be the Eisenstein series
where $a(n)$ gets replaced by $\tau(n)$, the divisor function. It
is noteworthy that in this case would cover the result of
Hooley~\cite{Hooley63} in~\eqref{intro:aim} as observed in~\cite{Sarn84}. The constant term yields another main term.

\subsection{Holomorphic modular forms}\label{sec:gl2:holomorphic} For an integer $K\ge 1$, we consider the complex vector space of weight $K$ holomorphic cusp forms on $\Gamma \SB G$. These are bounded automorphic functions $F$ on the upper-half plane which satisfy the automorphy relation:
\begin{equation}
F(\gamma z)=J(\gamma,z)^{2K} F(z),
\quad
\forall z\in \FmH, \gamma\in \Gamma.
\end{equation}
The cocycle $J(\gamma,z)$ is classical and will appear below. Here we recall that $J(\gamma,z)^2= cz+d$ for $\gamma=\Mdede{a}{b}{c}{d}\in \Gamma$.

Let $F$ be an holomorphic Hecke cusp form of weight $K$.
To have consistent notations, it is good to work with $f(z)=y^{K/2}F(z)$. The Fourier expansion reads:
\begin{equation}\label{Fourier-holomorphic}
f(z)=\sum_{n\ge 1} 
\frac{a(n)}{\sqrt{n}} 
W_{\frac{K}{2},\frac{K-1}{2}}
(4\pi n y) e(nx).
\end{equation}
We recall that $W_{\frac{K}{2},\frac{K-1}{2}}(y)=y^{K/2} e^{-y/2}$, so that~\eqref{Fourier-holomorphic} is equivalent to the usual $q$-development of $F$ inside the cusp at infinity. The function $f$ may be lifted to $\lcusp$ in the usual way and there corresponds a unique automorphic cuspidal representation $\pi$. The function $f$ then belongs to the line $V_{\pi,K}$ of weight $K$ vectors in $V_\pi$. The spectral parameter is $r_\pi=\frac{K-1}{2}$.

\subsection{Kirillov models}\label{sec:gl2:kirillov} More generally than the classical examples recalled in~\S\ref{sec:gl2:maass} and \S\ref{sec:gl2:holomorphic} it will be important in the sequel to choose an arbitrary form $\phi\in L^2(\Gamma\SB G)$ with mixed weights. Precisely we shall choose $\phi\in V_\pi$ in such a way that the Whittaker function that occurs in its Fourier expansion precisely matches the smoothing function $W$ in Theorem~\ref{th:main}.

We shall use the notation
\begin{equation*}
n(u)=\Mdede{1}{u}{0}{1},\quad
a(y)=\Mdede{y^{1/2}}{0}{0}{y^{-1/2}},
\end{equation*}
as elements of $\SL_2(\BmR)$. For a smooth function $\phi \in V^\infty_{\pi}$ we have the Whittaker integral
\begin{equation*}
W_\phi(y)=\int^{1}_{0} \phi(n(u)a(y))e(-u)du,
\quad y>0.
\end{equation*}
As in~\cite{BH08}, we define the Sobolev norms as
$\pnorm{\phi}_d = \sum_\mathfrak{D} \pnorm{\mathfrak{D}\phi}$ where $\mathfrak{D}$ ranges over all monomials in $H,L,R$ of degree at most $d$.

\begin{proposition}\label{prop:wtransform}
Let $W\in \Ccinf(0,\infty)$ be a smooth function of compact support and $\pi$ an automorphic representation. There exists a function $\phi\in V^\infty_\pi$ such that
\begin{equation}
W_\phi(y)=W(y),\quad y>0.
\end{equation}
\end{proposition}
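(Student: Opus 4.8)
The plan is to recognize the assignment $\phi\mapsto W_\phi$ as the Kirillov realization of $\pi$ along the positive torus $\{a(y):y>0\}$, and to reduce the statement to the inclusion $\Ccinf(0,\infty)\subset\mathcal{K}^\infty(\pi)$, where $\mathcal{K}^\infty(\pi)$ is the image of $V^\infty_\pi$ under this map. Since $\pi$ is generic (being cuspidal), uniqueness of the Whittaker functional shows that, writing also $W_\phi$ for the associated function on the group $W_\phi(g)=\int_0^1\phi(n(u)g)e(-u)\,du$, one has the equivariance $W_\phi(n(u)g)=e(u)W_\phi(g)$ together with $W_\phi(y)=W_\phi(a(y))$; thus $W_\phi$ really is the Kirillov function attached to $\phi$, and the proposition asserts precisely that every $W\in\Ccinf(0,\infty)$ arises this way. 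First I would record the action of the mirabolic subgroup $P=\{n(u)a(t):u\in\BmR,\ t>0\}$: from the identity $a(y)n(u)=n(uy)a(y)$ and the equivariance above one gets that $\pi(n(u))$ acts by multiplication by $y\mapsto e(uy)$ and that $\pi(a(t))$ acts by the dilation $W(y)\mapsto W(ty)$.

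These two operations are the engine of the construction. Fixing a base vector $\phi_0$ with $W_{\phi_0}\not\equiv0$ — for instance a lowest-weight or new vector, whose Whittaker function is an explicit $W_{p,ir}$ and hence real-analytic, rapidly decaying as $y\to\infty$, and nonvanishing on some interval — I would form, for $g\in\Ccinf(\BmR)$, the smoothed vector $\Phi_g=\int_{\BmR}g(u)\,\pi(n(u))\phi_0\,du\in V^\infty_\pi$; integrating a compactly supported smooth family of smooth vectors again yields a smooth vector. Its Kirillov function is $W_{\Phi_g}(y)=\hat g(y)\,W_{\phi_0}(y)$ with $\hat g(y)=\int_{\BmR}g(u)e(uy)\,du$, so one may multiply $W_{\phi_0}$ by an arbitrary Paley--Wiener multiplier, while $\pi(a(t))$ lets one dilate. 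A smooth partition of unity then reduces the problem to producing a $W$ supported in any single small interval on which $W_{\phi_0}$ is nonvanishing.

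The main obstacle is exactness, i.e. obtaining genuinely compact support rather than mere approximation: the multipliers $\hat g$ are entire of exponential type, hence never compactly supported, so the products $\hat g\,W_{\phi_0}$ cannot themselves lie in $\Ccinf$, and no finite combination of the elementary operations above closes up. This is exactly where the precise description of the archimedean Kirillov model is required: by the Jacquet--Langlands theory, $\mathcal{K}^\infty(\pi)$ consists precisely of the smooth functions on $(0,\infty)$ that decay rapidly at $\infty$ and admit, as $y\to0$, an asymptotic expansion built from the two Whittaker solutions $y^{1/2\pm ir_\pi}$ (with logarithmic corrections in the degenerate cases) dictated by the spectral parameter. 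A function $W\in\Ccinf(0,\infty)$ vanishes identically near $0$, so it trivially meets both constraints, and therefore lies in $\mathcal{K}^\infty(\pi)$; unwinding the identification produces the desired $\phi$. The only genuine analytic input is thus the near-$0$ asymptotics of $W_{p,ir}$, which I would isolate exactly as in the Whittaker estimates recalled above, and the same argument transfers to the metaplectic cover used in the later sections, where the two local solutions carry half-integral exponents.
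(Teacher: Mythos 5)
Your proposal is correct and follows essentially the same route as the paper: the paper's own proof consists of invoking the surjectivity of the archimedean Kirillov model, i.e.\ precisely the inclusion $\Ccinf(0,\infty)\subset\mathcal{K}^\infty(\pi)$ to which you reduce, with the details deferred to Blomer--Harcos \S2.3. Your additional discussion of the mirabolic action, the Paley--Wiener obstruction to an elementary construction, and the characterization of $\mathcal{K}^\infty(\pi)$ by rapid decay at $\infty$ and the asymptotic expansion at $0$ is an accurate sketch of what that citation contains.
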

The proposition follows from the surjectivity of the Kirillov model.
When $\Gamma=\SL_2(\BmZ)$ the details are carried out in~\cite{BH08}*{\S 2.3} and we do not repeat it here. In particular note that $\pnorm{\phi}_d<\infty$ for all integer $d\ge 1$. Also if $W^{(i)}\ll 1$ for all $i\ge 0$ then $\pnorm{\phi}_d\ll 1$ for all $d$. Since $\pi$ will be fixed, this inequality will suffice in the sequel. 

\subsection{New vector}
We have the Fourier expansion
\begin{equation}
\phi(n(x)a(y)) = \sum_{n\neq0} \frac{a_\phi(n)}{\sqrt{n}}W_\phi(ny)e(nx)
,\quad 
x\in \BmR,\ y\in \BmR_{>0}.
\end{equation}
The Theorem~\ref{th:main} is valid more generally for the Fourier coefficients $a_{\phi}(n)$ as the proof in section~\ref{sec:pf} shows. We need to explain that the form $\phi$ may be chosen in such a way that $a_\phi(n)$ coincide with the Dirichlet coefficients $a_\pi(n)$ of the associated $L$-function.

This is a classical fact from the theory of newvectors. When $\phi$ is a newvector the Fourier transform of the global Whittaker function coincides with the normalized $L$-function $L(s,\pi)$. See e.g. \cite{book:Gelbart}*{(6.37)} and~\cite{book:Gelbart}*{Thm~1.12} in the holomorphic case.

\section{Background on automorphic forms of half-integer weight}\label{sec:half}

\subsection{Half-integer forms}\label{sec:half:intro} A standard reference for
holomorphic forms is Shimura~\cite{Shimura73}, see also the introduction of~\cite{Duke88} and the references herein. We conserve notation consistent with~\cite{KS}.

The standard theta series reads:
\begin{equation}
 \theta(z):=y^{1/4}\sum_{n\in \BmZ} e(n^2 z).
\end{equation}
We may define the cocycle multiplier as follows:
\footnote{Explicitly one has:
\begin{equation}
 J(\gamma,z)=\chi_4(\gamma)e(i\arg(cz+d)/2),
 \quad
  \gamma=\Mdede{a}{b}{c}{d} \in \Gamma_0(4), z\in \FmH,
\end{equation}
where $\arg(cz+d)\in (-\pi,\pi]$ and $\chi_4$ is as in~\cite{Shimura73} or~\cite{Duke88}*{(2.1)}.}
\begin{equation}
 J(\gamma,z):=\frac{\theta(\gamma z)}{\theta(z)},\quad \gamma \in \Gamma_0(4).
\end{equation}

Let $k\ge \Mdemi$ be an half-integer with $2k$ odd. The Laplacian of weight $k$ is:
\begin{equation*}
\Delta_{k}=y^2(\Mpartialsquare{x}+\Mpartialsquare{y})
 -k iy\Mpartial{x}.
\end{equation*}

For $\chi:(\mathbb{Z}/N\mathbb{Z})^\times \to \BmC^\times$ a Dirichlet character modulo $N$, we define an automorphic cusp form $f$ of weight $k$ and level $N$ to be a function which satisfies:
\begin{equation}\label{def:autform}
  f(\gamma z)=\chi(a)J(\gamma,z)^{2k} f(z),\quad \forall \gamma= \begin{pmatrix} a& b\\c & d \end{pmatrix}\in \Gamma_0(4N),
\end{equation}
and
\begin{equation}
 \Delta_k f +\lambda f=0,\quad \lambda =\frac{1}{4}+r^2.
\end{equation}

To keep track on the dependency on $r=r_j$ and $k$ we let $f=f_{j,k}$, $\lambda=\lambda_j$ and assume as in Duke~\cite{Duke88} that it is $L^2$-normalized, $\pnorm{f_{j,k}}=1$  (note that the normalization in~\cite{KS} is such that $\rho_{j,k}(1)=1$). 

Its Fourier expansion reads as follows:
\begin{equation}\label{Fourier-half}
 f_{j,k}(z)=\sum_{n\not =0} \rho_{j,k}(n) W_{\Msgn(n)k/2,ir_j}(4\pi |n| y) e(nx),
 \quad z\in \FmH.
\end{equation}

Similarly, we define the space of holomorphic functions $S_k(N,\chi)$ of weight $k\ge 1/2$, level $4N$ and Nebentypus $\chi$ to consist of all holomorphic
functions $F(z)$ satisfying
\begin{equation}
F(\gamma(z))=\chi(a)(cz+d)^{k/2}\left(\frac{c}{d}\right)^k\left(\frac{-1}{d}\right)^{k/2}F(z),
\end{equation}
for all $\gamma= \begin{pmatrix}a&b\\c&d\end{pmatrix}\in\Gamma_0(4N)$.

If $F(z)\in S_k(N,\chi),$ then $y^{k/2}F(z)$ is an automorphic form of weight $k$, level $N$, 
Nebentypus $\chi$ and eigenvalue $\frac{k}{2}(1-\frac{k}{2})$, in the above sense. The Fourier expansion~\eqref{Fourier-half} is supported on the integers $n\ge 1$ and we recall that the Whittaker function again is: $W_{\frac{k}{2},\frac{k-1}{2}}(y)=y^{k/2}e^{-y/2}$.  
If $F(z)$ is moreover a newform, then this corresponds to the discrete series with $ir_j=\frac{k-1}{2}$. In this case $F(z)$ is the smallest weight vector, so
that $f_{j,p}=0$ when $\abs{p}<k$.

We discuss briefly the anti-holomorphic forms since they appear in the spectrum as well. Anti-holomorphic forms can be viewed as complex conjugates of holomorphic forms and therefore the analysis is no different. The Fourier expansion~\eqref{Fourier-half} is then supported on the non-positive integers $n$, the Fourier coefficients $\rho_{j,k}(n)$ are complex conjugate of the Fourier coefficients of holomorphic forms. The weight $k$ is negative and the spectral parameter $r_j$ is identical, in particular the Whittaker function $W_{\Msgn(n)k/2,ir_j}$ remains the same.

\subsection{Whittaker functions}\label{sec:gl2:whittaker} Throughout the text, the Whittaker function $W_{p,\nu}$ is as defined in section 9.2 of~\cite{GR}. We state in this paragraph the estimates we shall need during the proof of the main theorems. The proofs are given in \S\ref{sec:appendix}.

The crucial estimates concern the behavior as $y$ goes to zero. We shall distinguish three cases for the sake of clarity. The cases (i) (resp. (ii), (iii)) correspond to principal series (resp. complementary series, discrete series). These estimates were previously known for $p\in\BmZ$.

\begin{proposition}\label{prop:W-bound} There is an absolute constant $A>0$ with the following properties. 
(i) For $p,r\in \BmR$ the following holds:
\begin{equation}\label{eq:W-bound:i}
\frac{W_{p,ir}(y)}{\Gamma(\Mdemi+p+ir)}
\ll_\epsilon 
(\abs{p}+\abs{r}+1)^A y^{1/2-\epsilon},
\quad 0<y<1.
\end{equation}

(ii) For $p\in \BmR$, $0<\nu<1/2$ the following holds:
\begin{equation}\label{eq:W-bound:ii}
\frac{W_{p,\nu}(y)}{\Gamma(\Mdemi+p)}\ll_\epsilon (\abs{p}+1)^A y^{1/2-\nu-\epsilon},
\quad 0<y<1.
\end{equation}

(iii) For $p,\nu \in \BmR$ with $p-\nu- \Mdemi \in \BmN$ and $\nu > -\Mdemi+\epsilon$ or the following holds:
\begin{equation}\label{eq:W-bound:iii}
\frac{W_{p,\nu}(y)}{\abs{\Gamma(\Mdemi+p-\nu)\Gamma(\Mdemi+p+\nu)}^{1/2}}\ll_\epsilon (\abs{p}+\abs{\nu}+1)^A y^{1/2-\epsilon},
\quad 0<y<1.
\end{equation}

All three upper bounds holds true uniformly in $p,\nu$ in the given region and the implied constant depends on $\epsilon>0$ only.
\end{proposition}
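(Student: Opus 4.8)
The plan is to read off the behavior of $W_{p,\nu}(y)$ as $y\to 0$ from its Mellin--Barnes representation and control everything by shifting contours, as announced in \S\ref{sec:intro:poincare}. Recall that the Mellin transform of the Whittaker function is a ratio of Gamma functions (\cite{GR}*{\S7.621}):
\[
\int_0^\infty W_{p,\nu}(y)\,y^{s-1}\,dy
=\frac{\Gamma(\Mdemi+\nu+s)\,\Gamma(\Mdemi-\nu+s)}{\Gamma(1-p+s)},
\qquad \MRe(s)>\abs{\MRe\nu}-\Mdemi,
\]
so that by Mellin inversion
\[
W_{p,\nu}(y)=\frac{1}{2\pi i}\int_{(c)}
\frac{\Gamma(\Mdemi+\nu+s)\,\Gamma(\Mdemi-\nu+s)}{\Gamma(1-p+s)}\,y^{-s}\,ds,
\qquad c>\abs{\MRe\nu}-\Mdemi.
\]
By Stirling's formula each of the two numerator Gammas contributes a factor $e^{-\pi\abs{t}/2}$ along a vertical line $\MRe(s)=c$ while the single denominator Gamma contributes $e^{+\pi\abs{t}/2}$, so the integrand decays like $e^{-\pi\abs{t}/2}$ and the integral converges absolutely; this is exactly the feature that makes the representation suited to estimates uniform in $p$ and $\nu$.

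To extract the $y\to 0$ asymptotics I would shift the contour to the left, since for $0<y<1$ the factor $y^{-s}$ is small when $\MRe(s)$ is negative. Moving to an abscissa just to the left of $\MRe(s)=-\Mdemi+\abs{\MRe\nu}$ crosses the two poles of the numerator nearest the line $\MRe(s)=-\Mdemi$, at $s=-\Mdemi-\nu$ and $s=-\Mdemi+\nu$, and their residues produce the two leading terms
\[
\frac{\Gamma(-2\nu)}{\Gamma(\Mdemi-p-\nu)}\,y^{\Mdemi+\nu}
\qquad\text{and}\qquad
\frac{\Gamma(2\nu)}{\Gamma(\Mdemi-p+\nu)}\,y^{\Mdemi-\nu},
\]
namely the coefficients in the classical connection formula between $W_{p,\nu}$ and the two $M$-Whittaker functions, while the remaining shifted integral is $O(y^{\Mdemi+\abs{\MRe\nu}-\epsilon})$ and hence of strictly smaller order. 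In case (iii) the hypothesis $p-\nu-\Mdemi\in\BmN$ forces $\Gamma(1-p+s)$ to have a pole at $s=-\Mdemi+\nu$, so the $y^{\Mdemi-\nu}$ residue vanishes and only the clean power $y^{\Mdemi+\nu}$ survives, as it must for a discrete-series Whittaker function; this is the confluent situation and is the one requiring the most care, since when the two poles collide one must compute the double-pole residue.

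The genuinely delicate point is to bound the residue coefficients uniformly after dividing by the normalizing Gamma factor of each case. For the $y^{\Mdemi+ir}$ residue in case (i) ($\nu=ir$) I would use the reflection formula $\Gamma(\Mdemi+p+ir)\Gamma(\Mdemi-p-ir)=\pi/\cos\pi(p+ir)$ to rewrite the normalized coefficient as $\pi^{-1}\Gamma(-2ir)\cos\pi(p+ir)$. The crux is that Stirling gives $\abs{\Gamma(-2ir)}\asymp\abs{r}^{-1/2}e^{-\pi\abs{r}}$ while $\abs{\cos\pi(p+ir)}\le\cosh\pi r\asymp e^{\pi\abs{r}}$, so the two exponential rates cancel exactly, leaving a quantity that decays like $\abs{r}^{-1/2}$ and — \emph{crucially} — grows not at all in the real weight $p$. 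The companion $y^{\Mdemi-ir}$ residue is estimated by the analogous uniform Stirling bound for the product $\Gamma(\Mdemi-p+ir)\Gamma(\Mdemi+p+ir)$, where the leading $p$-dependences $\abs{r}^{\mp p}$ again cancel. The same mechanism, with the appropriate normalization, covers the complementary range (ii), where $\nu\in(0,\Mdemi)$ is bounded so that $\Gamma(\Mdemi+p\pm\nu)$ and $\Gamma(\Mdemi+p)$ differ only by a polynomial factor and the dominant residue $y^{\Mdemi-\nu}$ produces the exponent $\Mdemi-\nu$, and the discrete range (iii), where the geometric-mean normalization $\abs{\Gamma(\Mdemi+p-\nu)\Gamma(\Mdemi+p+\nu)}^{1/2}$ symmetrizes the single surviving residue.

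I expect the main obstacle to be precisely this simultaneous uniformity in $p$ and $\nu$ for real, hence possibly half-integral, weight $p$: one must track the exact exponential constants in Stirling's formula so that the $e^{-\pi\abs{r}}$ decay of the numerator Gammas balances the $\cosh$-type growth produced by the reflection formula, and one must control the full uniform Stirling expansion in the regime where $p$ and $r$ are simultaneously large. One must also treat the degenerate cases in which the two poles at $s=-\Mdemi\pm\nu$ coalesce (for instance $r\to 0$ in (i), or the resonances in (iii)), which replace a clean power by $y^{\Mdemi\pm\nu}\log(1/y)$; such logarithmic factors are harmlessly absorbed into the $y^{-\epsilon}$ appearing in the statement. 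Collecting the two residue estimates and the bound on the shifted remainder integral — which is $O(y^{\Mdemi+\abs{\MRe\nu}-\epsilon})$ with polynomial dependence on $(\abs{p}+\abs{\nu}+1)$ — then yields each of (i)--(iii).
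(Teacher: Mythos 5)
Your overall strategy coincides with the one in \S\ref{sec:appendix}: start from the Mellin--Barnes representation \eqref{W-mellin}, shift the contour to the left, and estimate separately the residues at $s=-\Mdemi\pm\nu$ and the shifted integral. Your treatment of the residues is sound, and in case (i) even slightly cleaner than the paper's: the reflection identity $\Gamma(\Mdemi+p+ir)\Gamma(\Mdemi-p-ir)=\pi/\cos\pi(p+ir)$ makes the cancellation of the $e^{\pm\pi\abs{r}}$ factors transparent and removes the $p$-dependence from that residue coefficient in one stroke. Your remarks on case (iii) — that $p-\nu-\Mdemi\in\BmN$ makes $\Gamma(\Mdemi+s-\nu)/\Gamma(1+s-p)$ entire, so the $y^{\Mdemi-\nu}$ term disappears — also match the paper.

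The gap is in the final assertion that the shifted remainder integral is $O(y^{\Mdemi+\abs{\MRe\nu}-\epsilon})$ \emph{with polynomial dependence on} $\abs{p}+\abs{\nu}+1$. That assertion is the entire difficulty of the proposition, and nothing in your argument supports it. On the shifted line the normalized integrand is
\[
\frac{\Gamma(\Mdemi+\sigma+it+\nu)\,\Gamma(\Mdemi+\sigma+it-\nu)}{\Gamma(1+\sigma+it-p)\,\Gamma(\Mdemi+p+ir)},
\]
and for $p$ large and real the factor $1/\Gamma(1+\sigma+it-p)$ is super-exponentially large in $p$ (of size roughly $\abs{\Gamma(p-\sigma-it)}e^{-\pi\abs{t}}$ by reflection), so one must show that it is compensated by the factor $\Gamma(\Mdemi+p+ir)$ \emph{uniformly in the integration variable} $t$. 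Unlike in the residue terms, the two Gamma factors carrying the large real shifts $\mp p$ now sit at different imaginary heights ($t$ versus $r$), so the reflection trick no longer closes the argument; this is exactly where the known proofs for $p\in\BmZ$ fail to extend to real $p$. The paper isolates two lemmas for this purpose: Lemma~\ref{lem:gamma-decay} (monotone decay of $\abs{\Gamma(\sigma+it)}$ in $\abs{t}$), used to move both Gamma factors to the common height $\max(\abs{t},\abs{r})$, and Lemma~\ref{lem:gamma-product-shifted}, which shows that for $\Im a=\Im b$ the product $\abs{\Gamma(a+p)\Gamma(b-p)}$ differs from $\abs{\Gamma(a+\fpart{p})\Gamma(b-\fpart{p})}$ only by a factor $(\abs{p}+\abs{\Im a}+1)^{O(1)}$. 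Without a substitute for these two steps, the uniformity in $p$ — which you yourself flag as ``the main obstacle'' — remains unproved, and your write-up stops exactly at the point where the real work begins.
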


\remark We point out that the condition $p-\nu\in \Mdemi+\BmN$ in (iii) is necessary because otherwise the behavior as $y\to 0$ is different, as can be seen from the proof in~\S\ref{sec:appendix:pf} because extra residues would appear.

\remark In the integral weight case ($p\in \BmZ$), our results correspond to the bounds of Bruggeman-Motohashi~\cite{BM05}, Harcos-Michel~\cite{HM06} and Blomer-Harcos~\cite{BH08}, except for a worse exponent in $p$. The approaches in the above mentioned papers do not seem to generalize to all $p\in\BmR$. Our method in~\S\ref{sec:appendix} has the advantage of working for half-integer weights, which is crucial for the present paper. It yields a distinct proof in the integral weight case as well.

\subsection{Half-integer Eisenstein series}\label{sec:half:eisenstein}

We include here some background and properties of Eisenstein series of half-integer weight. More details can be found in~\cite{Duke88}*{section 2}.
 For $k$ a half-integer and $\MRe(s)>1$, we define 

\[
E_{N,k}(s,z)=\sum_{\Gamma_{\infty}\backslash\Gamma_0(4N)}\chi_N(\gamma) e^{ik\cdot\textrm{arg}(J(\gamma,z))}\MIm(\gamma z)^s
\]
where $\gamma = \begin{pmatrix} a & b\\ c & d \end{pmatrix},$ 
$\chi_N(\gamma) = \left(\frac{-1}{d}\right)^k\left(\frac{c}{d}\right)^{2k} \left(\frac{N}{d}\right)$ and $j(\gamma,z)=cz+d$. 
Set also $E_{1/2}(s,z)=E_{1,1/2}(s,z)$. 

The $E_{N,k}(s,z)$ furnish the continuous spectrum of Maass forms of weight $k$ and level $N$. It is possible to meromorphically continue $E_{N,k}(s,z)$ to the
whole complex plane with no poles in $\MRe(s)>1$ except for possibly at $s=3/4$. In particular, following Duke we have the identity
$$\textrm{Res}_{s=3/4}E_{N,1/2}(s,z) = 2e(-1/4)\pi^{3/4}N^{-1/2}y^{1/4}\sum_{n\in\mathbb{Z}}e(Nn^2z).$$

\subsection{Computing the $L^2$ norms of theta-functions}\label{sec:normtheta}

We include here a computation of the $L^2$ norm of the theta function $\theta_N(z)=y^{1/4}\sum_{n\in\Z}e(Nn^2z)$ on $\Gamma_0(4N)$.
The computation should be known, but as we were unable to locate a convenient reference in the literature we provide a brief proof. First
recall that $E_{N,0}(s,z)$ has a residue of $\frac{3}{\pi\cdot[\SL_2(\Z):\Gamma_0(4N)]}$ at $s=1$. We would now like to form 
$$\langle \theta_N(z)\overline{\theta_N}(z),E_{N,0}(s,z)\rangle$$ and take the residue at $s=1$. The issue is that the integral diverges at the cusp, so we must regularize.
Define \[E^T_{N,0}(s,z):=\sum_{\Gamma_{\infty}\backslash\Gamma_0(4N)}\chi_N(\gamma) e^{ik\cdot\textrm{arg}(j(\gamma,z))}\MIm(\gamma z)^s\delta_{\MIm(\gamma z)<T}.
\]
The function $E^T_{N,0}(s,z)$ is only different from $E_{N,0}(s,z)$ at the cusp, and as $T\rightarrow\infty$ we have $E^T_{N,0}(s,z)\rightarrow E_{N,0}(s,z)$.

Unfolding, we have
\begin{align*}
R_T(s,z)&=\langle\theta_N(z)\overline{\theta_N}(z),E^T_{N,0}(s,z)\rangle\\
&=\int_0^T y^{-3/2+s}dy + \sum_{n\neq 0}\int_0^T e^{-4\pi Nn^2y}y^{-1/2+s}dxd^{\times}y\\
&=\frac{T^{s-1/2}}{s-1/2} + \sum_{n\neq 0}\int_0^T e^{-4\pi Nn^2y}y^{-1/2+s}dxd^{\times}y\\
\end{align*}

Now, we have that $\langle\theta_N,\theta_N\rangle=\frac{\pi\cdot[\SL_2(\Z):\Gamma_0(4N)]}{3}\lim_{T\rightarrow\infty}Res_{s=1}R_T(s,z)$. The key observation is that when taking residues at $s=1$ the first term drops off and then we can interchange the residue with the limit so that
\begin{align*}
\langle\theta_N,\theta_N\rangle &=\frac{\pi\cdot[\SL_2(\Z):\Gamma_0(4N)]}{3}\cdot Res_{s=1}\left(\sum_{n\neq 0}\int_0^{\infty} e^{-4\pi Nn^2y}y^{-1/2+s}dxd^{\times}y\right)\\
&=\frac{\pi\cdot[\SL_2(\Z):\Gamma_0(4N)]}{3}\cdot Res_{s=1}\left(2(4\pi N)^{1/2-s}\zeta(2s-1)\Gamma(s-1/2)\right)\\
&= \frac{4\pi\cdot[\SL_2(\Z):\Gamma_0(4N)]}{3}\cdot (4\pi N)^{-1/2}\Gamma(-1/2)\\
\end{align*}

If moreover $N$ is square-free, then we know by the next subsection that $Res_{s=3/4}E_{N,k}(s,z)$ is a multiple of $\theta_N(z)$. To determine the multiple,
we can define $E^T_{N,k}(s,z)$ analogously to $E^T_{N,0}(s,z)$ and compute $Res_{s=3/4}\langle\theta_N(z),E^T_{N,k}(s,z)\rangle$ as above to be $1$. We thus have that
\begin{equation}\label{eisensteinresidue}
Res_{s=3/4}E_{N,k}(s,z) = \frac1{\langle\theta_N,\theta_N\rangle}\cdot \theta_N(z).
\end{equation}

\subsection{The residual spectrum}\label{residualspectrum}\label{sec:half:residual}

In this subsection we discuss the residual spectrum of half-integer weight. These are the Maass forms of weight $\frac{k}{2}$ which have eigenvalue $\lambda=\frac{3}{16}$ and occur as residues of Eisenstein series. We mention that the Eisenstein series defined in the previous subsection correspond
to the cusp at $\infty$. To get the entire residual spectrum, one has to consider Eisenstein series corresponding to every cusp into account here. These forms are important to us as they will contribute the main term. 

For $k>5$, the spectrum is gotten from the residual spectrum for $\left(k\mod(4)\right)$ by applying the raising operator, see~\S\ref{sec:half:op}. It is thus only necessary to discuss the case of $k=1$ and $k=3$.

For $k=3$ there is no residual spectrum. Briefly, the constant term of the Eisenstein series is (See \cite{Duke88}*{\S2}, (2.8)\footnote{
Note that Duke writes $k$ for what in our notation is $k/2$})

$$\displaystyle\frac{\pi^s4^{1-s}e(-k/8)\Gamma(2s-1)}{\Gamma(s+k/4)\Gamma(s-k/4)}\phi_l(s)$$

where $\phi_l(s)$ is a singular series with a potential simple pole at $s=3/4$. However, for $k=3$ the pole gets canceled by the pole of $\Gamma(s-3/4)$. For more details, see~\cite{Duke88}*{\S2}.

For $k=1$ the spectrum was described completely in a beautiful paper of Serre and Stark~\cite{Se-St}. The results are as follows.

Define $M_{\frac{1}{2}}(N,\chi)$ to be the space of modular forms of level $N$, weight $\frac12$, Nebentypus $\chi$ and eigenvalues $\frac{3}{16}.$ Then an orthogonal basis for $M_{\frac{1}{2}}(N,\chi)$ is given by the set of theta functions \[\theta_{\psi,t} = y^{\frac14}\displaystyle\sum_{n\in\mathbb{Z}} \psi(n)e(tn^2z)\] 
Here $t\in\mathbb{N},$ and $\psi$ is a Dirichlet character of conductor $L$ which satisfy $4L^2t$ divides $N$.

Note that the condition on $t$ and $\psi$ ensures the space is finite dimensional.

\subsection{The metaplectic group and theta series}\label{sec:half:group}

Let $\widetilde{G}$ denote the metaplectic group, a nontrivial central extension of $G=\SL_2(\BmR)$. We have the exact sequence
\begin{equation}
 \BmZ/2\BmZ \rightarrow \widetilde{G} \rightarrow G
\end{equation}
There are several ways to define the group $\widetilde{G}$. We recall here that the $2$-cocycle has an explicit form on the standard Borel subgroup of $\GL_2(\BmR)$~\cite{book:Gelbart}*{\S 2.1 and \S4.1}:
\begin{equation}
 \beta(\Mdede{a}{x}{0}{b},
 \Mdede{a'}{x'}{0}{b'})
 =
 (a,b')_\infty.
\end{equation}
The Hilbert symbol is as follows~\cite{book:IK04}*{(3.37)}:
\begin{equation}
 (a,b)_\infty=
 \begin{cases}
-1 & \text{if $a<0$ and $b<0$,}\\
 1 & \text{otherwise.} 
 \end{cases}
\end{equation}

In particular the extension $\widetilde G\to G$ splits over the subgroup
\begin{equation}
 NA=\{n(x)a(y), x\in\BmR, y\in \BmR^\times\},
\end{equation}
where
\begin{equation}
  n(x):=\Mdede{1}{x}{0}{1} \qtext{and}\quad a(y)=\Mdede{y^{1/2}}{0}{0}{y^{-1/2}}.
\end{equation}

Let $Z=\{\pm 1\}$ be the center of $\SL_2(\BmR)$. The extension $\widetilde G \to G$ splits over $Z^0$. The center of $\widetilde{G}$ is $Z(\widetilde{G})=\BmZ/2\BmZ$.

One may check that the extension $\widetilde G\to G$ splits over $\Gamma_0(4)$ \cite{Shimura73}. We denote by $\widetilde{\Gamma}\to \Gamma\simeq \Gamma_0(4N)$ the image of $\Gamma_0(4N)$ under the splitting.

According to~\cite{book:Gelbart}*{p.50} the automorphic form $\theta$ is a form on $\tilde G$. It transforms non-trivially under the center $\BmZ/2\BmZ$ and therefore it is  genuine. By definition a character on $Z(\widetilde G)$ (resp. automorphic form on $\widetilde G$) is genuine when it is non-trivial (resp. when it does not factor through $G$). Note that if an automorphic form is not genuine, then it transforms trivially under the center and is induced from an automorphic form on $G$. More precisely, consider the double cover $\widetilde{SO}(2)$ of $SO(2)$ in $\widetilde G$, which is just $S^1$ as an 
abstract group. If we have an automorphic form $\phi$ which transforms under a character $\chi_\phi$ of $\widetilde{SO}(2)$, we say that it is of weight $k$ if
$\chi_{\phi}=\chi_{\theta}^{2k}$, for $k$ a half-integer. It is now easy to see that a form is of half-integer weight iff it is genuine.

Let $\chi$ be a congruence character on $\widetilde \Gamma$ which is non-trivial on the center $\BmZ/2\BmZ$. We shall work on the space $\lgen$ of functions on $\widetilde G$ invariant under $(\widetilde \Gamma,\chi)$. Note that since $\chi$ is nontrivial on $\BmZ/2\BmZ$, all the representations of $\widetilde{G}$ that occur in $\lgen$ are genuine.

Finally, we mention how Fourier expansion works in the Metaplectic group. We parametrise $SO(2)$ by 
$k(\theta):=\begin{pmatrix} \cos(\theta)&\sin(\theta) \\ -\sin(\theta)& \cos(\theta)\end{pmatrix},$ where $0\leq\theta<2\pi$. We can parametrize 
$\widetilde{SO}(2)$ by $\widetilde{k(\theta})$, where $0\leq\theta< 4\pi.$ Using $\tilde{G}=N\cdot A\cdot\widetilde{SO}(2)$ we thus have the following Fourier expansion
for an automorphic form $f_{j,k}$ of weight $k$ and Laplacian eigenvalue $\lambda_j = \frac14 + r_j^2$

$$f_{j,k}(n(x)a(y)\widetilde{k(\theta)}) = e(k\theta)\sum_{n\in\mathbb{Z}} \rho_{j,k}(n)W_{\Msgn(n)k/2,ir_j}(4\pi|n|y)e(nx).$$

\subsection{Spectral decomposition}\label{sec:half:basis} Summarizing we have described the spectral decomposition of $\lgen$ which consists of the following.
\begin{enumerate}[(i)]
\item An orthonormal basis of cusp forms $f_{j,k}$, where $k$ is the weight and $r_j$ is the spectral parameter;
\item an orthogonal basis of residual forms $\theta_{\psi,t}$ which are generated by theta series as described in~\S\ref{sec:half:residual};
\item a continuous spectrum provided by the analytic continuation of Eisenstein series.
\end{enumerate}

\subsection{Maass operators}\label{sec:half:op} 
We take the usual basis for the lie algebra $\mathfrak{g}$ of $\SL_2(\BmR)$ as follows:

$$H=\begin{pmatrix} 1&0\\ 0&-1\end{pmatrix}, R=\begin{pmatrix} 0&1\\ 0&0\end{pmatrix},L=\begin{pmatrix} 0&0\\ 1&0\end{pmatrix}.$$

The center of the universal enveloping algebra $U(\mathfrak{g})$ is generated by the Casimir operator $\Delta:=H^2+2RL+2LR$. The operators $R, L$ have the
property that if $\psi$ is an automorphic form of weight $k$, then $R\psi, L\psi$ are of weights $k+2,k-2$ respectively. If we restrict these operators to
automorphic forms of pure weights viewed on the upper-half plane, we get the following classical operators.

The Maass lowering operator is defined by:
\begin{equation}
 \Lambda_k=iy
 \frac{\partial}{\partial x}-
 y\frac{\partial}{\partial y}+\frac{k}{2}.
\end{equation}
Suppose $f,g$ are of compact support. We have the following equality which follows by integration by parts:
\begin{equation}
 (f,\Delta_k g)=(\Lambda_k f ,\Lambda_k g)+\frac{k}{2}(1-\frac{k}{2})(f,g).
\end{equation}
The Maass raising operator is
\begin{equation}
 R_k:=iy\frac{\partial}{\partial x}
 +y\frac{\partial}{\partial y}
 +\frac{k}{2}.
\end{equation}
See~\cite{DFI8} and~\cite{Sarn84} for further discussions on the spectrum of $\Delta_k$.

If the $L^2$-normalized Maass cusp form $f_{j,k}(z)$ has spectral parameter $r_j$, weight $k$ and Fourier coefficients $\rho_{j,k}(n)$ then the normalized form $\dfrac{\Lambda_k f_{j,k}}{\pnorm{\Lambda_k f_{j,k}}}$ is of weight $k-2$ and its Fourier coefficients equal $\pnorm{\Lambda_k f_{j,k}}^{-1}\rho_{j,k}(n)$ when $n<0$ and $\pnorm{\Lambda_k f_{j,k}}\rho_{j,k}(n)$ when $n>0$. We assumed implicitly that $\pnorm{\Lambda_k f_{j,k}}$ is non-zero that is $f_{j,k}$ is not the lowest weight vector of a discrete series. In general one has 
\begin{equation}\label{raisingfactor}
 \pnorm{\Lambda_k f_{j,k}}^2=
 (\frac{k}{2}-\frac{1}{2}-ir_j)
 (\frac{k}{2}-\frac{1}{2}+ir_j)
\end{equation}

This follows, when $n>0$, by inspecting the Fourier expansion~\eqref{Fourier-half} and the following recurrence relation (\cite{GR}*{(9.234.3)}):
\begin{equation}
 (\nu^2-(p-\Mdemi)^2)W_{p-1,\nu}(y)=
 (p-\Mdemi y)W_{p,\nu}(y)
 -yW'_{p,\nu}(y).
\end{equation}
When $n<0$, one needs to use the following equality:
\begin{equation}
 W_{p+1,\nu}(y)=
 (p+\frac{y}{2})W_{p,\nu}(y)
 -yW'_{\lambda,\nu}(y).
\end{equation}
That latter equation may be proved starting from the Hankel's representation of the Whittaker function.

\subsection{Shimura correspondence and Selberg's bound}\label{sec:half:shimura}
We shall need a bound on the spectral parameter of half-integral weight automorphic forms. That is, let $\tilde{\pi}$ be an cuspidal representation on the 
metaplectic group $\widetilde{G}$, which does not correspond to a 1-dimensional theta function. We shall need lower bounds on the Laplacian eigenvalue 
$\lambda_{\tilde{\pi}}$. In order to do this one can use the Theta correspondence from $\widetilde{SL}_2$ to $PGL_2$. For an introduction, 
see ~\cite{Pras93}*{Theorem 8.7},\cite{KS}*{Proposition 2.3}, \cite{Sarn84}, and in the general case Waldspurger\cite{Wald80}. 

   As is explained in the Appendix to \cite{PS84}, 
given $\tilde{\pi}$ one can associate through the theta correspondence 
a \emph{non-zero} cuspidal representation $\pi$ on $PGL_2$,  and the representation $\pi$ at the infinite place depends only on representation $\tilde{\pi}$ at 
the infinite place (in Piatetski-Shapiro, the set of cuspidal metaplectic representation not coming from one dimensional theta-functions is referred to 
as $A_{00}$). We shall use only the following fact: If the spectral parameter of $\tilde{\pi}$ is $r$, then the spectral parameter of $\pi$ is $2r$, as is 
explained thoroughly in Gelbart~\cite{book:Gelbart}, Section 4.3. Recall
that the Laplacian eigenvalues are then $\lambda_{\pi}=\frac14 - 4r^2$ and $\lambda_{\tilde{\pi}} = \frac14 - r^2$. This enables us to transfer bounds towards Selberg's eigenvalue conjecture from integral weight to half integral weight. In particular, Selberg's $\lambda_{\pi}\geq 3/16$ bound corresponds to the Goldfeld-Sarnak~\cite{GS83} $\lambda_{\tilde \pi} \ge 15/64$ bound on half-integral weight.


\subsection{Iwaniec's bound}\label{sec:half:iwaniec} 
For cusp forms of half-integral weight, one has the following inequality for weights $k=3$ or $k=1$ (see~\cite{Duke88}*{Theorem 5} and~\cite{BM10}):
\begin{equation*}
  \rho_{j,k}(d)\ll \abs{r_j}^{5/4-\Msgn(d)/8} \Mch(\frac{\pi r_j}{2}) |d|^{-1/2+\delta}
\end{equation*}

We will require a version of the above inequality that is uniform over weights $k$. Fortunately, this is easy by using the raising operators and the normalization
given by equation~\eqref{raisingfactor}:

\begin{equation}\label{Duke-Iwaniec}
\begin{aligned}
\rho_{j,k}(d)\ll &\abs{r_j}^{5/4-\Msgn(d)/8} 
\Mch(\frac{\pi r_j}{2}) |d|^{-1/2+\delta}
\times \\
\times 
&\left(\prod_{m=0}^{\lfloor k/4\rfloor} |(k/2-1/2-m-ir_j)(k/2-1/2-m+ir_j)|^{1/2}
\right)^{-\Msgn(d)}
\end{aligned}
 \end{equation}
If $\psi_j$ does in fact come from a holomorphic form then the bound has in fact been worked out by Mao in an appendix to~\cite{BHM07}.

We shall more frequently use \eqref{Duke-Iwaniec} in the form stated below:

\begin{proposition}\label{Duke-Iwaniec:Clean}
There exists a real number $A>0$ such that for all forms $f_{j,k}$, $0<y<1$, and non-zero $d\in \BmZ$, we have:
\begin{equation}
W_{\Msgn(d)\frac{k}{2},ir}(y)\rho_{j,k}(d)\ll y^{1/2-\theta/2}(1+\abs{r_j}+\abs{k})^A\times |d|^{-1/2+\delta}
\end{equation}
\end{proposition}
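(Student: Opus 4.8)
The plan is to obtain the bound by multiplying the pointwise Whittaker estimates of Proposition~\ref{prop:W-bound} against the coefficient bound~\eqref{Duke-Iwaniec}, and then to check that the resulting normalizing factors collapse to a fixed power of $(1+\abs{r_j}+\abs{k})$. I would split into three cases according to the type of the metaplectic representation carrying $f_{j,k}$ — principal, complementary, discrete — matching the three parts of Proposition~\ref{prop:W-bound} via the substitution $p=\Msgn(d)\tfrac{k}{2}$ and $\nu=ir_j$. The factor $\abs{d}^{-1/2+\delta}$ survives unchanged from~\eqref{Duke-Iwaniec}, so the whole problem is to control the $y$-dependence and the joint dependence on $k$ and $r_j$.

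For the $y$-dependence I would argue as follows. In the principal series ($r_j\in\BmR$) and in the discrete series the Whittaker bound supplies $y^{1/2-\epsilon}$, which for $0<y<1$ is already $\le y^{1/2-\theta/2}$. In the complementary series $r_j$ is purely imaginary and $\nu:=\abs{\Im r_j}\le\theta/2$; this is exactly the transfer of Selberg's bound to the metaplectic group through the Shimura correspondence of \S\ref{sec:half:shimura} (where $r_\pi=2r_j$), and part~(ii) then gives $y^{1/2-\nu-\epsilon}$, which matches $y^{1/2-\theta/2}$ up to the harmless $\epsilon$ (absorbed by enlarging the admissible $\theta$). Thus the asserted $y$-power holds in all three cases, and the exponent $1/2-\theta/2$ is dictated precisely by the complementary series.

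What remains is the spectral and weight factor. Inserting the two normalizations, the task reduces to showing
\[
\frac{\bigl|\Gamma(\Mdemi+\Msgn(d)\tfrac{k}{2}+ir_j)\bigr|\,\Mch(\tfrac{\pi r_j}{2})\,\abs{r_j}^{5/4-\Msgn(d)/8}}
{\Bigl(\prod_{m=0}^{\lfloor k/4\rfloor}\bigl|(\tfrac{k}{2}-\tfrac12-m)^2+r_j^2\bigr|^{1/2}\Bigr)^{\Msgn(d)}}
\ll (1+\abs{r_j}+\abs{k})^A,
\]
with the evident replacement of the Gamma-normalization by $\bigl|\Gamma(\Mdemi+p-\nu)\Gamma(\Mdemi+p+\nu)\bigr|^{1/2}$ in the discrete case. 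The mechanism is that $\Mch(\pi r_j/2)\asymp e^{\pi\abs{r_j}/2}$ is cancelled by the exponential decay of the Gamma factor supplied by Stirling, while the raising-operator product in the denominator cancels the $k$-dependent algebraic growth of $\bigl|\Gamma(\Mdemi+\tfrac{k}{2}+ir_j)\bigr|$. Rather than expand both sides by Stirling, the cleanest implementation matches the recurrence \cite{GR}*{(9.234.3)} that relates $W_{p-1,\nu}$ to $W_{p,\nu}$ against the very products that produced~\eqref{raisingfactor}, thereby reducing to the base weights $k\in\{\tfrac12,\tfrac32\}$. There only finitely many bounded factors survive, the elementary identity $\bigl|\Gamma(\Mdemi+ir)\bigr|^2=\pi/\Mch(\pi r)$ makes the $\Mch$/$\Gamma$ cancellation explicit, and the leftover is exactly Duke~\cite{Duke88}*{Theorem~5} (and Mao's appendix to~\cite{BHM07} for the genuinely holomorphic $\psi_j$), which is already built into~\eqref{Duke-Iwaniec}.

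The hard part will be making this cancellation \emph{uniform} in $k$ and $r_j$ simultaneously, and in particular in the transitional range $\abs{r_j}\asymp k$, where the Gamma factor is neither in its pure decay regime nor in its pure polynomial regime, so the exponential and the leading $k$-dependent powers do not cancel term by term: one must use the full raising-operator product to absorb the entire $k$-dependent part of the Gamma factor. Keeping the bookkeeping of powers exact, so that a single $A$ works for all $k$, all $r_j$, both signs of $d$, and all three series at once, is the real content of the proposition; once the factors are matched via the shared recurrences, only the bounded base-case estimate remains.
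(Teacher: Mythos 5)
Your proposal is correct and follows essentially the same route as the paper: combine Proposition~\ref{prop:W-bound} with the uniform Duke--Iwaniec bound~\eqref{Duke-Iwaniec}, telescope the raising-operator product into a ratio of Gamma functions, and verify the polynomial bound in $k$ and $r_j$ by Stirling together with Lemma~\ref{lem:gamma-product-shifted} (your recurrence-based reduction to the base weights $k\in\{\tfrac12,\tfrac32\}$ is that same telescoping in different clothing). The only step you make explicit that the paper leaves implicit is the complementary-series bookkeeping $\nu\le\theta/2$ via the Shimura correspondence of \S\ref{sec:half:shimura}, which is indeed where the exponent $1/2-\theta/2$ comes from.
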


\begin{proof} Let $\iota=\Msgn(d)=\pm 1$.
Assume that $f_{j,k}$ corresponds to a principal series. Then by Proposition \ref{prop:W-bound} and the Duke-Iwaniec bound \eqref{Duke-Iwaniec} we need only to show
that \[\abs{\Gamma(1/2+\iota k/2+ir_j)}\Mch(\frac{\pi r_j}{2})\times\left(\prod_{m=0}^{\lfloor k/4\rfloor} |(k/2-1/2-m+ir_j)|
\right)^{-\iota}\ll (1+\abs{r_j}+\abs{k})^A\]

Now, 
\begin{align*}
\abs{\Gamma(1/2+\iota k/2+ir_j)}\times\Mch(\frac{\pi r_j}{2})\left(\prod_{m=0}^{\lfloor k/4\rfloor}|(k/2-1/2-m+ir_j)|
\right)^{-\iota}&=\\
 \abs{\Gamma(1/2+\iota k/2 +ir_j)}\left(\frac{\abs{\Gamma(k/2-1/2+ir_j)}}{\abs{\Gamma(ir_j)}}\right)^{-\iota}\Mch(\frac{\pi r_j}{2})\\
\end{align*}

If $\iota=1$ (i.e. $d>0$) the result follows from $\Mch(it)\asymp e^{|t|}$ and $\Gamma(it)\asymp \abs{t}^{-1/2}e^{-\pi|t|/2}$ as $t\rightarrow\infty$. Else, if $\iota=-1$, we are reduced to showing that
\[\abs{\frac{\Gamma(k/2-1/2+ir_j)\Gamma(-k/2+1/2+ir_j)}{\Gamma(ir_j)^2}}\ll (1+\abs{r_j}+\abs{k})^A\] and the result follows from
Lemma~\ref{lem:gamma-product-shifted}. The cases where $f_{j,k}$ corresponds to a discrete or complementary series follow similarly.

\end{proof}

\subsection{Poincar\'e series}\label{sec:half:poincare} The Poincar\'e series of weight $k$ is defined by
\begin{equation}\label{def:P}
 P(z):=\sum_{\gamma\in \Gamma_\infty \SB \Gamma_0(4N)} f(\gamma z) \overline{J(\gamma,z)}^{2k}
\end{equation}
where $f(x+iy)=\Psi(y)e(dx)$. When $\Psi(y)=e^{-2\pi\abs{d}y}y^s$, we shall write $P_s$ for $P$. It is possible to continue $P_s$ analytically from the relation:
\begin{equation}
 [\Delta_{k}+s(1-s)]P_s =2\pi d(k-2s\Msgn(d))P_{s+1},
\end{equation}
following~\cite{Cohen-Sarnak} or \cite{Sarn84}. More precisely $P_s$ admits a meromorphic continuation to $\MRe s >\frac{1+\theta}{2}$ with a simple pole at $s=3/4$. We shall recall the explicit value of the residue in \S~\ref{sec:holo:main} (theta series).

\section{Holomorphic forms, part I}\label{sec:hol}
Here we shall follow the classical approach via Poincar\'e series, (see~\cites{Good83,Selb65}). Most recent papers where the method has been refined are~\cites{Sarn94,Harc03}. See also~\cite{cong:park:mich} for a survey. Recall that the essence of the method consists in forming $\langle P_s,f\overline{\theta}\rangle$, then on the one hand expanding spectrally with the Parseval relation and on the other hand unfolding the Poincar\'e series yielding a weighted Dirichlet series. The key ingredients involved are the analytic continuation of $P_s$ and a triple product estimate. The definition of $P_s$ has been given in \S\ref{sec:half:poincare}, where the weight is $k:=K-1/2$. 

\begin{remark}
We use the Poincar\'e series with $\Psi(y):=e^{-2\pi \abs{d}y} y^s$, because its Mellin transform is very explicit: its just a product of ratios of gamma functions. 
This has the affect of making the class of test functions we sum against more restricted as their Mellin transforms have exponential vertical decay. In section~\ref{sec:pf} 
we remove this restriction.
\end{remark}

\subsection{Introduction} We consider $f$ a newform in $S_K(\Gamma_0(N),\chi)$, with $\chi$ a character of level dividing $N$. 
\begin{equation}
 f(z)=y^{K/2}
 \sum^\infty_{n=1}
 n^{\frac{K-1}{2}}a(n)e(nz),
\quad z=x+iy.
\end{equation}

Given a sufficiently nice, smooth function $g$ on the reals, We wish to understand the following sum:
\begin{equation}
 \sum_{n\in\BmZ}
 a(n^2+d)
 g(\frac{2n^2+d}{Y})
\end{equation}

\subsection{Dirichlet series} The inner product $\langle,\rangle$ and the theta series are as in~\S\ref{sec:half:intro} and the Poincar\'e series is as in~\S\ref{sec:half:poincare}. We also set $z=x+iy$. Unfolding the Poincar\'e series one obtains (see also~\cite{Sarn84}*{(2.14)}):
\begin{equation}\label{PsfT}
\begin{aligned}
 \langle P_s,\overline{f} \theta \rangle&=
 \int_{\Gamma_\infty\SB \FmH}
 \overline{f(z)}\theta(z) e(dx) e^{-2\pi \abs{d}y}
 y^{s} \frac{dxdy}{y^2}\\
 &= \int^\infty_0 \int^1_0
 \Bigl(\displaystyle\sum_{m=0}^{\infty} m^{(K-1)/2} a(m) \overline{e(mz)} \Bigr)
 \Bigl(\displaystyle\sum_{n=-\infty}^{\infty} e(n^2 z)\Bigr)
 e(dx) e^{-2\pi \abs{d}y}
 y^{\frac{K}{2}+\frac{1}{4}+s} \frac{dxdy}{y^2}\\
 &=\frac{%
 \Gamma(\frac{K}{2}-\frac{3}{4}+s)}
 {%
 (2\pi)^{\frac{K}{2}-\frac{3}{4}+s}}
 \sum_{n^2+d>0}
 \frac{a(n^2+d)
 \abs{n^2+d}^{\frac{K-1}{2}}}
 {(2n^2+d + \abs{d})^{\frac{K}{2}-\frac{3}{4}+s}}.
\end{aligned}
\end{equation}
Observe that on the second line all terms are zero except when $m=n^2+d$.

In the next subsections we shall prove that the inner product is holomorphic for $\MRe s > \Mdemi+\theta$ with a possible simple pole at $s=3/4$ and control the growth on vertical lines uniformly in $d$. 

\subsection{Spectral expansion}
The next step is to expand spectrally the inner product via
Parseval's relation:
\begin{equation}
 \langle P_s,\overline{f} \theta\rangle =\sum_j
 \langle P_s,f_j\rangle \langle f_j,\overline{f} \theta\rangle 
 \pluscont
\end{equation}
We have set $f_j:=f_{j,k}$ for the present section~\ref{sec:hol} and recall that $k=K-\Mdemi$. The sum is over an orthonormal basis of Maass wave
forms of weight $k$ and nebentypus $\chi$ plus the continuous spectrum which is not displayed here, but whose contribution can be bounded in the same way as for 
the Maass forms.

\subsection{The Mellin transform of Whittaker function} Unfolding again we have (see \S\ref{sec:half:intro} for the expansion of weight $k$ automorphic forms and we have set $\rho_{j}:=\rho_{j,k}$):
\begin{equation}\label{Psfj}
\begin{aligned}
 \langle P_s,f_j\rangle 
 &=
 \int_{\Gamma_\infty\SB \FmH}
 \overline{f_j(z)}
 e^{-2\pi \abs{d}y}y^s e(dx) \frac{dxdy}{y^2}\\
 &=\overline{\rho_j(d)}
 \int^\infty_0
W_{\Msgn(d)k/2,\overline{ir_j}}
 (4\pi\abs{d}y)
  e^{-2\pi \abs{d}y}
  y^{s-1} \frac{dy}{y}\\
 &=\overline{\rho_j(d)}
 (4\pi \abs{d})^{1-s}
 \int^\infty_0
 W_{\Msgn(d)k/2,\overline{ir_j}}(y)
  e^{-y/2}
  y^{s-1} \frac{dy}{y}\\
 &=\overline{\rho_j(d)}
 (4\pi \abs{d})^{1-s}
\times
\frac
{\Gamma(s+ir_j-\Mdemi)\Gamma(s-ir_j-\Mdemi)}
{\Gamma(s-\Msgn(d)\frac{k}{2})}.
\end{aligned}
\end{equation}
In the last identity we have used~\cite{GR}*{(7.621-11)}, see also~\eqref{W-mellin} in the section \ref{sec:appendix}. We have also used the fact that $r_j$ is 
real or purely imaginary so that $\{ir_j,-ir_j\}=\{\overline{ir_j},-\overline{ir_j}\}$.

\subsection{Triple product estimate}
We shall give an estimate for $\langle f_j,\overline{f}\theta\rangle$ in terms of the eigenvalue $r_j$ as $r_j\rightarrow\infty$, with the correct exponential decay. Sarnak gave a very general such estimate in~\cite{Sarn94}, but it doesn't quite apply here (for instance, $\theta(z)$ is not a cusp-form). In our case, it turns out to be easier to prove this estimate directly. The main observation is that $\theta(z)$ is the residue of the unique pole of $E_{1/2}(s,z)$ at $s=3/4,$ so we can study the triple product 
$\langle f_j,\overline{f}E_{1/2}(s,z)\rangle$ as a function of $s$, and mimic standard L-function methods to get the desired estimate.

Namely, expanding for $\MRe(s)>K/2$ gives:

\begin{equation}\label{fjfE}
\begin{aligned}
R(s)&:=\langle f_j,\overline{f}E_{1/2}(s,z)\rangle
\\
&=
\int_{\Gamma_\infty\SB \FmH}
 f_j(z)f(z)y^s\frac{dxdy}{y^2}\\
&=\displaystyle\sum_{n=1}^{\infty}n^{\frac{K-1}{2}}a(n)\rho_j(n)\int_0^{\infty}W_{\Msgn(d)k/2,ir_j}(4\pi ny)e^{-2n\pi ny}y^{s+\frac{K}{2}-1}\frac{dy}{y}\\
&=\displaystyle\sum_{n=1}^{\infty}\frac{n^{\frac{K-1}{2}}a(n)\rho_j(n)}{(4\pi n)^{s+\frac{K}{2}-1}}\int_0^{\infty}W_{\Msgn(d)k/2,ir_j}(y)e^{-y/2}y^{s+\frac{K}{2}-1}\frac{dy}{y}\\
&= \frac{\Gamma(\frac{1}{2}+s+ir_j)\Gamma(\frac{1}{2}+s-ir_j)}{(4\pi)^{s+\frac{K}{2}-1}\Gamma(1+s-\Msgn(d)k/2)}\displaystyle\sum_{n=1}^{\infty}\frac{a(n)\rho_j(n)}{n^{s-\frac12}} \\
\end{aligned}
\end{equation}

Now, using the bound (see \eqref{Duke-Iwaniec}) $\rho_j(n)\ll |r_j|^ke^{\pi r_j/2}$, we see that $R(s)$ is uniformly bounded by $|r_j|^{3k}e^{-\abs{\abs{\MIm s}-\pi r_j}/2}$ on $k+1>\MRe(s)>k$. Since $R(s)$ satisfies a functional equation inherited from the functional equation for $E_{1/2}(s,z)$, we can bound it on $-k<\MRe(s)<1-k$ as well. Now using Phragmen-Lindelof, we get that
\begin{equation}\label{boundvertical}
\langle f_j,\overline{f}\theta\rangle \ll \abs{r_j}^k e^{-\pi r_j/2}
\end{equation}
as desired.

\subsection{Isolating the error term}

Going back to our expansion, we write it as
\begin{equation}
 \langle P_s,\overline{f} \theta\rangle =
 \sum_{f_j\in RES} \langle P_s,f_j\rangle \langle f_j,\overline{f} \theta\rangle
 +
 \sum_{f_j\not\in RES} \langle P_s,f_j\rangle \langle f_j,\overline{f} \theta\rangle 
 =S_0+S_1
\end{equation}

The second summand $S_1$ will be the error term which we deal with now. It's a sum over eigenfunctions not coming from the residual spectrum, and so by bounds towards Ramanujan each summand is holomorphic in $\MRe s > \frac{1+\theta}{2}$. Moreover, by Weyl's law and Stirling's formula we have the bound
\begin{equation}
\begin{aligned}
|S_1|&\ll_{k,\epsilon}
\sum_{f_j\not\in RES} \overline{\rho_j(d)}
 (4\pi \abs{d})^{1-s}
\times\frac{\Gamma(s+ir_j-\Mdemi)\Gamma(s-ir_j-\Mdemi)}
{\Gamma(s-\Msgn(d)\frac{k}{2})} \times |r_j|^ke^{-\pi r_j/2}\\
&\ll_{k,\epsilon}\sum_{f_j\not\in RES} \abs{d}^{1/2+\delta-s+\epsilon}|r_j|^k\times\frac{\Gamma(s+ir_j-\Mdemi)\Gamma(s-ir_j-\Mdemi)}
{\Gamma(s-\Msgn(d)\frac{k}{2})}\\
&\ll_{k,\epsilon} \abs{d}^{1/2-s+\delta+\epsilon}.
\end{aligned}
\end{equation}
We have used equations \eqref{boundvertical} and \eqref{Duke-Iwaniec}. The bound is uniform inside the critical strip and with $\frac{1+\theta}{2}+\epsilon \leq \MRe s \leq 3$. 

This gives us the promised error estimate, as long as we can identify the main term from the sum $S_0$ over the residual spectrum. We proceed to do this now.

\subsection{Main Term}\label{sec:holo:main} 
In this subsection we deal with the sum over the residual spectrum.  Recall that we are summing over forms of weight $k=K-1/2$. By the theory of raising operators, 
all the spectrum comes from weight $1/2$ if $K$ is odd, and from weight $3/2$ if $K$ is even (see \S\ref{sec:half:residual}). By~\cite{Duke88}, there is no 
residual spectrum of weight $3/2$, so we restrict to the case of $K$ odd. We first consider the case of $N$ square-free. Then by Serre-Stark~\cite{Se-St}, there 
is only residual spectrum of weight $1/2$ if $\chi=\chi_{4N}$, and in that case it's 1-dimensional and spanned by the theta function:
\begin{equation}
\theta_N(z)=y^{\frac{1}{4}}\displaystyle\sum_{n\in \BmZ} e(Nn^2z).
\end{equation}
We care about $u(z)$, which is $\theta_N(z)$ raised to level $k$, and normalized to be unitary. The $d$'th Fourier coefficient of $u(z)$ can be computed explicitly, and is equal to 0 unless $d\in N\cdot\mathbb{Z}^2,$ in which case:

\[\hat{u}(d)=\frac{1}{||\theta_N||_2}\cdot\Bigl(\displaystyle\prod_{i=1}^{\frac{K-1}{2}} (i)(i-1/2)\Bigr)^{-1/2}\]

Now we need to compute $\langle u,\overline{f}\theta\rangle$. To do this we again use that $u(z)$ is a multiple of 
the residue at $s=3/4$ of the weight $k$ Eisenstein series at level $N$, which we called $E_{N,k}(s,z)$. Using equation \eqref{eisensteinresidue} we deduce
that \[Res_{s=3/4}E_{N,k}(s,z)=\frac{u(z)}{||\theta_N||_2}\cdot\Bigl(\displaystyle\prod_{i=1}^{\frac{K-1}{2}} (i)(i-1/2)\Bigr)^{1/2}\]

 We compute $\langle E_{N,k}(s,*),\overline{f}\theta\rangle$ by unfolding the Eisenstein series, and then take the residue at $s=3/4$. The expansion is:

\begin{align*}
\langle E_{N,k}(s,*),\overline{f}\theta \rangle  
&=
\sum_{n\in \BmZ} a(n^2)\int_0^{\infty}e^{-2\pi n^2y}y^{s-3/4+K/2}\frac{dy}{y}\\
&=
 \frac{\Gamma(s-3/4+K/2)}{(2\pi)^{s-3/4+K/2}}\sum_{n\in \BmZ}\frac{a(n^2)}{n^{2s-1/2}}.\\
\end{align*}

We evaluate the Dirichlet series $\sum_{n}\frac{a(n^2)}{n^{2s-1/2}}$ to be $\zeta(4s-1)^{-1}L(2s-1/2,\Msym^2f)$, and so taking the residue at $s=3/4$ gives
$2\zeta^{-1}(2)Res_{s=1}L(s,\Msym^2f)\frac{\Gamma(K/2)}{(2\pi)^{K/2}}.$ Putting things together, we get that
\[\langle f\theta,u(z)\rangle \hat{u}(d) =2\displaystyle\prod_{i=1}^{\frac{K-1}{2}}\Bigl(i(i-1/2)\Bigr)^{-1}\Gamma(\frac{K}{2})(2\pi)^{K/2}\zeta^{-1}(2)Res_{s=1}L(s,\Msym^2f)\]

We now briefly consider the case of general level $N\ge 1$. In this case, the residual spectrum is furnished by a finite set of theta functions $\theta_{\psi,t}$, where $tL^2 \mid N$, and a $\psi(n)$ is a Dirichlet character $\psi:(\mathbb{Z}/L)^*\rightarrow\mathbb{C}^*$ such that $\chi_t\psi=\chi.$ Since they all arise as residues of Eisenstein series, the main term can in principle be computed as above, though we do not do so. 

With the above we see that the main term vanishes unless the following three conditions are satisfied:
\begin{enumerate}

\item $f(z)$ is a dihedral form;

\item $d$ is positive and divides $N$;

\item If $d=d'c^2$ where $d'$ is square-free, and $L$ is the conductor of $\chi\chi_d^{-1}$, then $d'L^2$ must divide $N$.

\end{enumerate}

\subsection{Summing up}
We write down the exact result in the case of $d>0$. In that case, we have

\[\langle P_s,\overline{f} \theta\rangle = \frac{%
 \Gamma(\frac{K}{2}-\frac{3}{4}+s)}
 {%
 (4\pi)^{\frac{K}{2}-\frac{3}{4}+s}}
 \sum_{n\in\mathbb{Z}}
 \frac{a(n^2+d)
 \abs{n^2+d}^{\frac{K-1}{2}}}
 {(2n^2+2d)^{\frac{K}{2}-\frac{3}{4}+s}} = \frac{%
 \Gamma(\frac{K}{2}-\frac{3}{4}+s)}
 {%
 2^{\frac{K}{2}-\frac{3}{4}+s}\pi^{\frac{K}{2}-\frac{3}{4}+s}}\sum_{n\in\mathbb{Z}}
 \frac{a(n^2+d)}
 {(n^2+d)^{s-\frac{1}{4}}}\]

Now let $g(x)$ be a smooth function on $\mathbb{R}_{+},$ such that the Mellin transform $\tilde{g}(s)$ decays sufficiently quickly on vertical strips (in particular, faster than $\Gamma(s)$). 
 Then the above estimates and a standard argument with shifting lines of integration gives, for $Y>d$,
 \[\displaystyle\sum_{n\in\mathbb{Z}} a(n^2+d)g\left(\frac{n^2+d}{Y}\right) = M_{f,d}\sqrt{Y}\tilde{g}\left(\frac{1}{2}\right) + 
O_{\epsilon}\left(\frac{Y^{\frac{\theta}{2} + \frac{1}{4}-\epsilon}}{d^{\frac{\theta}{2}-\delta+\epsilon}}\right)\]
where
\[M_{f,d}:= \begin{cases}
\frac{2^{K/2}(4\pi)^{1/4}\zeta^{-1}(2)}{\Gamma(3/4-k/2)}Res_{s=1}L(s,\Msym^2f)
\displaystyle\prod_{i=1}^{\frac{K-1}{2}}\Bigl(i(i-1/2)\Bigr)^{-1}& d\in N\Z^2, \chi=\chi_{4N}, K\notin 2\Z\\
0 & else\\
\end{cases}\]

\section{Holomorphic forms, part II}\label{sec:sketch}

In the next section~\ref{sec:pf} we will present a proof of our theorem that works uniformly for holomorphic and Maass forms. The goal of this section is to illustrate the main ideas of that proof without much of the technical difficulties. As such, in this section we prove our main theorem for holomorphic forms 
working with Maass forms in a fixed weight on the upper half plane. The problem with working in a fixed weight in the upper half-plane rather than on the metaplectic group is this restricts the test functions one can sum against. As such, we shall only be summing against a very restricted class of test functions. Nonetheless, the main ideas remain the same.

We proceed with the proof.
Fix $f(z)\in S_K(N,\chi)$, a holomorphic form of integral weight $K$. Assume $d > 0$ for simplicity.

\begin{equation}
f(z) = y^{\frac{K}{2}}\displaystyle\sum_{n>0} n^{\frac{K-1}{2}}a(n)e(nz)
\end{equation}

We again consider $f\bar{\theta}\in S_{K-\frac12}(N,\chi)$ and compute the $d$'th Fourier coefficient:

\begin{equation}\label{sumholoII}
\int_0^1 f(x+iy)\bar{\theta}(x+iy)e(-dx)dx = y^{\frac{K}{2}+\frac14}\displaystyle\sum_{n\in\Z}(n^2+d)^{\frac{K-1}{2}}a(n^2+d)e^{-(2n^2 + d)y}
\end{equation}

We will take $y=\frac{1}{Y}$ where Y is some large number and so \eqref{sumholoII} is the sum we wish to bound. Now, we expand $f\bar{\theta}$ spectrally in
$S_{K-\frac12}(N,\chi)$, singling out the terms coming from the residual spectrum

\begin{equation}\label{spectralholoII}
f\bar{\theta} = \sum_{\tau\in RES}c_\tau\psi_\tau + \sum_{\tau\not\in RES} c_\tau\psi_\tau
\end{equation}

where the sum ranges over distinct Maass forms of weight $K- \frac{1}{2}$, and $\psi_\tau$ is normalized so that $||\psi_\tau||=1$. The sum in 
\eqref{spectralholoII} converges absolutely and uniformly, so equating $d$'th Fourier coefficients we get the identity:

\begin{equation}\label{coeffholoII}
y^{\frac{K}{2}+\frac14}\displaystyle\sum_{n\in\Z}(n^2+d)^{\frac{K-1}{2}}a(n^2+d)e^{-\frac{(2n^2 + d)}{Y}} = \displaystyle\sum_\tau c_\tau\rho_\tau(d) W_{\frac{K}{2}-\frac14,ir_\tau}(\frac{d}{Y})
\end{equation}

As we have seen before, the finitely many terms coming from the residual spectrum will constitute the main term, as worked out in \S\ref{sec:holo:main}. Our goal therefore becomes to bound :
\begin{equation}
 \displaystyle\sum_{\tau\not\in RES} \rho_\tau(d)c_\tau W_{\frac{k-1}{4},r_\tau}(\frac{d}{Y})
\end{equation}
uniformly in $d$ and $Y$.

For a fixed representation $\tau$ outside the residual spectrum, the bound we wish to get is easy. Namely, $|\rho_\tau(d)|\ll_{\tau} d^{-1/2+\delta}$ and by the asymptotics of the Whittaker function near $0$ (See Prop. \ref{prop:W-bound}), combined with the bound towards Selberg's eigenvalue conjecture, $|W_{\frac{K}{2}-\frac14,r_\tau}(y)|\ll_{\tau} y^{\abs{\MRe(ir_\tau)}+\frac12}\leq y^{\frac{1}{2}-\frac{\theta}2}$. Multiplying, this gives the bound  
\begin{equation}
\abs{c_\tau\rho_\tau(d) W_{\frac{K}{2}-\frac14,ir_\tau}(\frac{d}{Y})}\ll_\tau d^{-1/2+\delta}
\Bigl(\frac{d}{Y}\Bigr)^{\frac{1-\theta}{2}}
\end{equation}

What we need is a version of \eqref{spectralholoII} that is uniform in $\tau$, or more precisely  in $r_\tau$. Since both the asymptotics for the Whittaker function and the Ramanujan bound are already uniform up to polynomial dependence, what we really need is good control in the $c_\tau$. To accomplish this, we introduce a light version of Sobolev norms. Namely, for a function $\psi(z)$ on $S_k(N,\chi)$, we define the $m'th$ Sobolev norm to be

$$||\psi||_m = ||\Delta_k^{(m)}\psi||.$$

That is, we apply the weight-$k$ Laplacian $m$ times, and then take usual $L^2$ norm. 

It is easy to see (via explicit Fourier expansion, for example) that for $A\geq 0, \Delta_k^{(A)}(f\bar{\theta})$ decays exponentially at the cusps, and thus the $A$'th Sobolev norm $||f\bar{\theta}(z)||_A$ is finite.
Now using the spectral expansion \eqref{spectralholoII} and applying the Laplacian $A$ times we get
\[\Delta_k^{(A)}(f\bar{\theta}) = \sum_{\tau}c_\tau(\frac14+|r_\tau|^2)^A\psi_\tau\]
Taking $L^2$ norms and using Parseval's relation we get the bound
\[\sum_\tau |r_\tau|^{2A} |c_\tau|^2\leq ||f\bar{\theta}||_A\]
and the immediate corollary:
\begin{equation}\label{ctau}
\abs{c_\tau}\ll_C |r_\tau|^C
\end{equation}
for any real number C.

Equation~\eqref{ctau} allows us as  much polynomial control as we want, and so we can finish the argument with an application of Cauchy-Schwarz. By Proposition~\ref{prop:W-bound} regarding the asymptotics of the Whittaker function and the Duke-Iwaniec bound~\eqref{Duke-Iwaniec} for Fourier coefficients there is a $B>0$ such that 
\begin{equation}\label{uniformholoII}
\abs{\rho_\tau(d) W_{\frac{K}{2}-\frac14,r_\tau}(\frac{d}{Y})}\ll d^{-1/2+\delta}\Bigl(\frac{d}{Y}\Bigr)^{\frac{1-\theta}{2}}|r_\tau|^B
\end{equation}
 uniformly in $\tau$. Combining the above with~\eqref{ctau} for $C=-B-3$ and Weyl's law, we get the desired error estimate:
\begin{equation}
\begin{aligned}
\abs{ \displaystyle\sum_{\tau\not\in RES} \rho_\tau(d)c_\tau W_{\frac{k-1}{4},r_\tau}(\frac{d}{Y})}
&\ll d^{-1/2+\delta}\Bigl(\frac{d}{Y}\Bigr)^{\frac{1-\theta}{2}}\displaystyle\sum_{\tau\not\in RES}\abs{c_\tau}|r_\tau |^B\\
&\ll d^{-1/2+\delta}\Bigl(\frac{d}{Y}\Bigr)^{\frac{1-\theta}{2}}\displaystyle\sum_{\tau\not\in RES}|r_\tau |^{-3}\\
&\ll d^{-1/2+\delta}\Bigl(\frac{d}{Y}\Bigr)^{\frac{1-\theta}{2}}
\end{aligned}
\end{equation}

For convenience, we plug in \eqref{uniformholoII}  back into \eqref{coeffholoII} to get
\[
Y^{\frac{-K}{2}-\frac14}\displaystyle\sum_{n\in\Z}(n^2+d)^{\frac{K-1}{2}}a(n^2+d)e^{-\frac{(2n^2 + d)}{Y}} = \sum_{\tau\in RES}c_\tau\rho_\tau(d)W_{\frac{K}{2}-\frac14,\lambda_\tau}(\frac{d}{Y}) + O(d^{-1/2+\delta}\Bigl(\frac{d}{Y}\Bigr)^{\frac{1-\theta}{2}})\]

The main term can now be more explicitly computed just as in section \ref{sec:hol}.

\section{Proof of Theorem~\ref{th:main}}\label{sec:pf}
We recall that for Maass forms the classical approach via Poincar\'e series to the shifted convolution problem fails to produce a suitable estimate because there are missing harmonics. For forms of half-integral weight the situation is even worse because more integrals are hypergeometric functions that cannot be expressed as a product of Bessel functions. Although this problem is purely local (archimedean) it is a delicate one.

A good solution is to use the framework of representation theory. In the context of split shifted convolution problems this is achieved in Blomer-Harcos~\cite{BH08} following the works of Bernstein-Reznikov and Venkatesh on Sobolev norms in the framework of representation theory. The main idea is that the missing harmonics are to be found in the higher weight vectors in the automorphic representation. Retrospectively the classical approach with Poincar\'e series only bears the new vector of the representation which is not flexible enough. Our proof of Theorem~\ref{th:main} is based on that idea as well. We shall try to use notation of~\cite{BH08} as closely as possible for convenience of the reader.

There is an important difference with the integral weight case that we would like to highlight. Seeing as how we shall have to work on the metaplectic group, all the estimates on Whittaker function will be gotten \Lquote{bare hands} without resorting to the Kirillov model. This is because Kirillov models for half-integral forms are different and we cannot use it in our context.

\subsection{Choice of local vector}
Following~\S\ref{sec:gl2:kirillov} we start out with picking an appropriate smooth vector $\phi$ for the $\GL_2$ automorphic representation $\pi$. Note that $\phi$ is not $K$-finite in general. We have the expansion
\begin{equation}
\phi(n(x)a(u)) = \sum_{n\neq0} \frac{a(n)}{\sqrt{n}}W_\phi(nu)e(nx)
,\quad 
x,u\in \BmR,\ u>0,
\end{equation}
where $W_{\phi}(y) = \int_0^1 \phi(n(x)a(y))e(-x)dx$ and $a(n):=a_\pi(n)$.

The Whittaker transform can be made to be any smooth function of compact support according to Proposition~\ref{prop:wtransform}. So we pick a smooth vector $\phi\in \pi$ such that 
\begin{equation}\label{choiceWphi}
W_\phi(y)= \exp(\frac{-2\pi d}{Y}) W(y)y^{1/2}e^y,\quad y>0. 
\end{equation}
Here $W$ is the function from~\eqref{eq:th:main} in Theorem~\ref{th:main} which we recall is smooth and compact support on $(1,2)$. Because of the assumptions on $W$ we have that $\pnorm{W^{(i)}_\phi}\ll 1$ for all $i\ge 1$, see also the remarks following~Theorem~\ref{th:main}. As recalled in~\S\ref{sec:gl2:kirillov}, we deduce that $\CmS_B \phi \ll 1$ for all $B$. The multiplicative constant may depend on $B$ only.

For simplicity we work with $G=\SL_2(\BmR)$ instead of the more general group $\SL^{\pm}_2(\BmR)$, where all the $\GL_2$ automorphic forms naturally live. This means that from now on we view $\phi$ as an element of $\lcusp$.

\subsection{Sobolev norms on the metaplectic cover} We define Sobolev norms on functions on the metaplectic cover $\Gamma \SB \widetilde G$ in the same way as for integral forms on $\GL(2)$. Namely the Lie algebra of $\widetilde G$ is identified with $\Fms\Fml_2(\BmR)$. Recall the basis formed by the matrices $L,H,R$. Given a smooth function on $\Gamma \SB \widetilde G$ and an integer $d\ge 0$ we define $\CmS_d \phi=\sum_{\FmD} \pnorm{\FmD\phi}$ where $\FmD$ ranges over all monomials in $H,L,R$ of degree at most $d$.

The only occurrence of these norms is in Plancherel relation~\eqref{plancherel} and Lemma~\ref{lem:sobbound} below.

\subsection{Spectral expansion on the metaplectic group}
We lift $\phi$ to the metaplectic group $\widetilde{G}$ in the obvious way. Since $\theta$ is a genuine  on $\widetilde{G}$ the product $\phi\overline{\theta}$ is a genuine function in $\lgen$. The next step is to expand spectrally that function according to the orthonormal basis from~\S\ref{sec:half:basis}.

Expanding, we arrive at
\begin{equation}\label{expansion-half}
\phi\overline{\theta} = \sum_{\tau} \psi_{\tau}
\pluscont,
\end{equation}
where $\tau$ corresponds to a certain genuine representation of eigenvalue $\frac{1}{4}+r_\tau^2$ and the vector $\psi_\tau$ is of weight $p_\tau$. In particular the form $\psi_\tau$ is always proportional to one element $f_{j,p}$ in the basis described in~\S\ref{sec:half:basis}.

Recall that because $\phi\overline{\theta}$ is genuine, the sum above is restricted to genuine representations $\tau$. Classically the expansion~\eqref{expansion-half} would involve only Maass forms of half-integral weight.

The sum converges in the Sobolev norm topology. More precisely the Plancherel formula and an iterative application of the Laplacian give
\begin{equation}\label{plancherel}
\sum_{\tau} (1+\abs{r_\tau}+\abs{p_\tau})^{2A}
\pnorm{\psi_{\tau}}^2 \ll \CmS_B(\phi\overline{\theta})^2 
\end{equation}
and a similar bound for the continuous spectrum. Here $B>0$ is some absolute constant that depends only on $A$.

\subsection{Uniform estimate}
\begin{lemma}\label{lem:sobbound}
For all $B>0$, $\CmS_B(\phi\overline{\theta})\ll 1$. The multiplicative constant depends only on $B$.
\end{lemma}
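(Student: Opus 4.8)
The plan is to bound the Sobolev norm $\CmS_B(\phi\overline{\theta})$ by exploiting the product structure, reducing everything to properties we already control: the Sobolev norms of $\phi$ itself (which are $O(1)$ by the choice of local vector) and the fixed nature of the theta function $\theta$. The key observation is that $\theta$ is \emph{not} of compact support and does not decay at the cusp, so the naive bound fails; the saving must come entirely from the cuspidality and rapid decay of $\phi$.

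First I would recall that $\CmS_B(\phi\overline{\theta}) = \sum_{\FmD}\pnorm{\FmD(\phi\overline{\theta})}$ where $\FmD$ ranges over monomials in $H,L,R$ of degree $\le B$. Applying the Leibniz rule, each $\FmD(\phi\overline{\theta})$ expands into a finite sum of terms $\FmD_1\phi \cdot \overline{\FmD_2\theta}$ with $\deg\FmD_1 + \deg\FmD_2 \le B$. Thus it suffices to bound each $\pnorm{\FmD_1\phi \cdot \overline{\FmD_2\theta}}$. The difficulty is that the $L^2$ norm is over $\Gamma\SB\widetilde{G}$ and $\theta$ grows like $y^{1/4}$ at the cusp, so $\FmD_2\theta$ is merely of moderate growth and is certainly not in $L^2$ on its own.

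The main obstacle, then, is controlling the behavior at the cusp. The resolution is that $\phi$, being a smooth vector in a \emph{cuspidal} representation, decays faster than any power of $y$ as $y\to\infty$ in the Siegel domain, and the same holds for every $\FmD_1\phi$ since applying $H,L,R$ preserves cuspidality (the raising and lowering operators move between weight spaces within $\pi$ but stay in the cuspidal spectrum). I would make this quantitative using the Fourier expansion: writing $\FmD_1\phi(n(x)a(y)) = \sum_{n\neq 0}\frac{a(n)}{\sqrt{n}}W_{\FmD_1\phi}(ny)e(nx)$, the Whittaker function has exponential decay in $ny$, which dominates the polynomial growth of $\FmD_2\theta$. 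Pointwise one obtains $|\FmD_1\phi \cdot \overline{\FmD_2\theta}| \ll y^{C} e^{-c y}$ at the cusp for suitable constants, so the integrand is integrable and the $L^2$ norm over a fundamental domain converges.

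To assemble the bound uniformly in the choice of $\phi$, I would invoke the estimate $\CmS_B\phi \ll 1$ established after~\eqref{choiceWphi}, which holds because $W_\phi(y) = \exp(-2\pi d/Y)W(y)y^{1/2}e^y$ has all derivatives $O(1)$ (the factor $\exp(-2\pi d/Y)$ is bounded precisely under the hypothesis $Y\gg|d|$). Since $\theta$ is a \emph{fixed} automorphic form independent of all parameters, the quantities $\pnorm{\FmD_2\theta}$ restricted against the rapidly decaying $\phi$ contribute only absolute constants. Combining the Leibniz expansion with the Cauchy--Schwarz inequality applied to the pointwise product on the fundamental domain, and summing over the finitely many monomials $\FmD$ of degree $\le B$, yields $\CmS_B(\phi\overline{\theta}) \ll_B \CmS_B\phi \ll_B 1$, which is the claim. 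The only care needed is to verify that the implied constant genuinely depends on $B$ alone and not on $d$ or $Y$; this follows because the only $d,Y$-dependence enters through the uniformly bounded factor $\exp(-2\pi d/Y)$ already absorbed into $\CmS_B\phi \ll 1$.
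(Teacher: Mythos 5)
Your proposal is correct and follows essentially the same route as the paper: the Leibniz rule reduces the claim to bounding terms $\pnorm{\FmD_1\phi\,\overline{\FmD_2\theta}}_2$, and then an $L^2\times L^\infty$ splitting lets the rapid decay of the cuspidal $\phi$ (and of all its derivatives) at the cusps absorb the polynomial growth of the derivatives of $\theta$, with uniformity coming from $\CmS_B\phi\ll 1$. The only cosmetic difference is that the paper quantifies the rapid decay via the Michel--Venkatesh height function $\Ht$ and their assertion S3b, writing $\pnorm{\FmD_1\phi\,\FmD_2\overline{\theta}}_2\le\pnorm{\Ht^C\FmD_1\phi}_2\pnorm{\Ht^{-C}\FmD_2\theta}_\infty$, whereas you extract the same decay from the Whittaker expansion; this also handles all cusps at once, a point your pointwise argument should note for $\Gamma_0(4N)$.
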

\begin{proof} This will follows from the bound $\pnorm{\phi}_B\ll 1$. Care has to be taken because $\theta$ is not a cusp form. This is resolved by introducing a further argument and controlling the growth towards the cusps.

Let $\Ht:\widetilde\Gamma \SB \widetilde G \to \BmR_{\ge 1}$ be a height function as in Michel-Venkatesh~\cite{MV10}. In the familiar case of $\SL_2(\BmZ)\SB \SL_2(\BmR)$, the function $\Ht$ represents the inverse of the shortest vector, in the standard fundamental domain it is given by the ordinate.
According to assertion S3b in~\cite{MV10}*{\S2.4.3}, since $\phi$ is a cusp form, we have $\CmS_B(\Ht^B \phi)\ll 1$ uniformly.

To bound the Sobolev norm $\CmS_B(\phi\overline{\theta})$ we only need to consider the $L^2$-norm $\pnorm{\FmD_1 \phi \FmD_2 \overline{\theta}}_2$ for differential operators $\FmD_1,\FmD_2$ which are monomials in $L,H,R$ of bounded degree. Since 
\begin{equation}
\pnorm{\FmD_1 \phi \FmD_2 \overline{\theta}}_2 \le \pnorm{\Ht^C \FmD_1 \phi}_2 \pnorm{\Ht^{-C} \FmD_2 \theta}_\infty,
\end{equation}
we may choose $C$ large enough so that $\Ht^{-C}$ compensates for all polynomial growths in the derivatives of $\theta$. This concludes the proof.
\end{proof}

\subsection{Unipotent integral}
On the group $\widetilde{G}$ we may expand
\begin{equation}
\theta(n(x)a(y)) = y^{1/4} \sum_{n\geq 0} e^{-2\pi n^2y}e(n^2x),\quad 
x,y\in \BmR,\ y>0.        
\end{equation}

We obtain therefore
\begin{equation}
I:=\int_0^1 \phi\overline{\theta} (n(x)a(\frac{1}{Y}))e(-dx)dx = Y^{-1/4} \sum_{n\geq 0}
\frac{a(d+n^2)}{\sqrt{d+n^2}}W_{\phi}(\frac{d+n^2}{Y})e^{-2\pi \frac{n^2}{Y}}
\end{equation}

Because of our choice of vector $\phi$ and more specifically because of~\eqref{choiceWphi} we obtain that
\begin{equation}\label{I-mainth}
I=Y^{-3/4} \sum_{n\geq 0}
a(d+n^2)W(\frac{d+n^2}{Y}).
\end{equation}
Thus $I$ is equal to $Y^{-3/4}$ times the sum we are looking for in the statement of Theorem~\ref{th:main}.

\subsection{Decomposition of \texorpdfstring{$I$}{I}}
We now insert~\eqref{expansion-half} in the integral $I$ above. For all $\tau$ we have
\begin{equation}\label{def:Wtau}
\int^1_0 \psi_\tau(n(x)a(\frac{1}{Y}))e(-dx)dx=\frac{\rho_\tau(d)}{\sqrt{d}} W_{\psi_\tau}(\frac{\abs{d}}{Y}).
\end{equation}
where $\rho_\tau(d)$ is as in \S\ref{sec:half:intro}. In particular the bound~\eqref{Duke-Iwaniec} applies as stated. The identity~\eqref{def:Wtau} could be taken as definition of $W_{\psi_\tau}$ for our purpose.


The expansion~\eqref{expansion-half} also splits naturally into the residual
spectrum coming from the theta series which yields the main term, and the rest of the spectrum which will end up being an error
term. We denote this by
\begin{equation}
I=\Ires + \Ioff
\end{equation}
Compared to~\S\ref{sec:hol} we are now picking it up directly from
the spectral expansion.

\subsection{Non-residual spectrum} We shall first bound the non-residual spectrum $\Ioff$.
Recall that $\theta$ is the exponent towards Selberg eigenvalue conjecture.
The main inequality is 
\begin{lemma} Assume that $\tau$ corresponds to a principal or complementary series. Then,
\begin{equation}\label{main-W-sobolev}
\frac{\abs{W_{\psi_{\tau}}(u)}}
 {\abs{\Gamma(\Mdemi+p_\tau+ir_\tau)}}
 \ll_\epsilon u^{\frac{1}{2} - \frac{\theta}{2}-\epsilon} (1+\abs{r_\tau}+\abs{p_\tau})^A ||\psi_{\tau}||.
\end{equation}
Assume that $\tau$ corresponds to an holomorphic series. Then
\begin{equation}
\frac{\abs{W_{\psi_{\tau}}(u)}}
{\abs{\Gamma(\Mdemi+p_\tau-ir_\tau)\Gamma(\Mdemi+p_\tau+ir_\tau)}^{1/2}}
 \ll_\epsilon u^{\frac{1}{2} - \frac{\theta}{2}-\epsilon} (1+\abs{r_\tau}+\abs{p_\tau})^A ||\psi_{\tau}||.
\end{equation}
\end{lemma}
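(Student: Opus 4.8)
The plan is to reduce the lemma to the uniform Whittaker bounds of Proposition~\ref{prop:W-bound}, the only genuinely new input being the passage from the generic exponent $\Mdemi$ to the Selberg-twisted exponent $\Mdemi-\frac{\theta}{2}$ in the complementary range. Throughout I identify $p_\tau$ with the first index of the classical Whittaker function, so that the normalizing Gamma-factors in the lemma are exactly those of Proposition~\ref{prop:W-bound}.

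First I would make the relation between $W_{\psi_\tau}$ and a classical Whittaker function explicit. Writing $\psi_\tau=c_\tau f_{j,p_\tau}$ with $f_{j,p_\tau}$ the $L^2$-normalized basis vector of~\S\ref{sec:half:basis}, so that $\pnorm{\psi_\tau}=\abs{c_\tau}$, I insert the Fourier expansion of~\S\ref{sec:half:intro} into the defining identity~\eqref{def:Wtau}. Matching the $d$-th coefficient identifies $W_{\psi_\tau}(u)$ with $c_\tau$ times the classical $W_{\Msgn(d)p_\tau/2,ir_\tau}(4\pi u)$, up to an elementary factor independent of $u$. Consequently it suffices to bound the classical Whittaker function for $0<u<1$ divided by its appropriate Gamma-factor, with the scalar $\pnorm{\psi_\tau}$ pulled out in front; the constant $4\pi$ in the argument is harmless since for $0<u<1$ it only affects the implied constant.

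Next I would split into the three spectral types and invoke Proposition~\ref{prop:W-bound} in each. In the principal series case $r_\tau\in\BmR$, part~(i) gives the bound with $y^{1/2-\epsilon}$ against the denominator $\Gamma(\Mdemi+p_\tau+ir_\tau)$; since $0<y<1$ and $\theta\ge 0$ we have $y^{1/2-\epsilon}\le y^{1/2-\theta/2-\epsilon}$, which is the claimed first inequality. The discrete (holomorphic) case is handled identically by part~(iii), whose normalizing factor $\abs{\Gamma(\Mdemi+p_\tau-ir_\tau)\Gamma(\Mdemi+p_\tau+ir_\tau)}^{1/2}$ is exactly the one in the second inequality of the lemma, and again $y^{1/2-\epsilon}\le y^{1/2-\theta/2-\epsilon}$. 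The complementary series is where the exponent $\theta/2$ is spent: here $r_\tau=i\nu_\tau$ is purely imaginary, part~(ii) yields $y^{1/2-\nu_\tau-\epsilon}$ against $\Gamma(\Mdemi+p_\tau)$, and I would convert this to the required $\Gamma(\Mdemi+p_\tau+ir_\tau)=\Gamma(\Mdemi+p_\tau-\nu_\tau)$ at the cost of the ratio $\Gamma(\Mdemi+p_\tau)/\Gamma(\Mdemi+p_\tau-\nu_\tau)\ll(1+\abs{p_\tau})^{\nu_\tau}$, which by Stirling is absorbed into the polynomial factor $(1+\abs{r_\tau}+\abs{p_\tau})^A$.

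The main obstacle, and the only place where the hypotheses of the section are genuinely needed, is controlling $\nu_\tau$ in the complementary case. I would appeal to the Shimura correspondence and Selberg's bound as recalled in~\S\ref{sec:half:shimura}: a genuine cuspidal $\tau$ that is not a one-variable theta function lifts to a \emph{non-zero} cuspidal representation of $PGL_2$ with spectral parameter $2r_\tau$, so the bound towards Selberg's conjecture forces $2\nu_\tau\le\theta$, i.e.\ $\nu_\tau\le\theta/2$. Since $0<u<1$ this gives $y^{1/2-\nu_\tau-\epsilon}\le y^{1/2-\theta/2-\epsilon}$, completing the complementary case; note also that $\theta/2<1/4<1/2$, so the hypothesis $0<\nu_\tau<1/2$ of part~(ii) is automatically satisfied. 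Collecting the three cases and restoring the factor $\pnorm{\psi_\tau}$ yields both displayed inequalities.
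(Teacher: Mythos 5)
Your argument is correct and is essentially the paper's own proof: write $\psi_\tau=\alpha f_{j,p}$ so that $W_{\psi_\tau}=\alpha W_{p,ir_\tau}$ and $\pnorm{\psi_\tau}=\abs{\alpha}$, then apply the three cases of Proposition~\ref{prop:W-bound}. You merely make explicit two points the paper leaves implicit — that the Selberg bound transported through the Shimura correspondence (\S\ref{sec:half:shimura}) gives $\nu_\tau\le\theta/2$ in the complementary case, and that the normalizing factor $\Gamma(\Mdemi+p_\tau)$ of part~(ii) can be traded for $\Gamma(\Mdemi+p_\tau+ir_\tau)$ at polynomial cost — both of which are consistent with the paper's intent.
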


\begin{proof} Since $\psi_\tau$ is proportional to the normalized $f_{j,p}$, it is sufficient to establish~\eqref{main-W-sobolev} with $f_{j,p}$ in place of $\psi_\tau$. Then it follows from Proposition~\ref{prop:W-bound} and the fact that $\pnorm{f_{j,p}}=1$. In more details, if $\psi_\tau=\alpha f_{j,p}$ for some $\alpha$ then $W_{\psi_\tau}=\alpha W_{p,i r_j}$ and $\pnorm{\psi_\tau}=\abs{\alpha}$.
\end{proof}

Then we apply Duke-Iwaniec bound~\eqref{Duke-Iwaniec} for $\rho_\tau(d)$ in the form of Corollary~\ref{Duke-Iwaniec:Clean} and the Whittaker bounds above to obtain
\begin{equation}
\begin{aligned}
\Ioff &= \sum_{\tau} 
\frac{\rho_\tau(d)}{\sqrt{\abs{d}}}
W_{\psi_\tau}\bigl(\frac{\abs{d}}{Y}\bigr)
\\
&\ll \sum_{\tau} 
\bigl(\frac{\abs{d}}{Y}\bigr)^{1/2-\theta/2} 
\abs{d}^{-1/2+\delta}
(1+\abs{r_\tau}+\abs{p_\tau})^{A}
\pnorm{\psi_\tau}
\\
&\ll \bigl(\frac{\abs{d}}{Y}\bigr)^{1/2-\theta/2} 
\abs{d}^{-1/2+\delta}.
\end{aligned}
\end{equation}
The last inequality follows from Cauchy-Schwarz inequality and \eqref{plancherel}. 

Taking into account~\eqref{I-mainth}, the estimate for $\Ioff$ corresponds exactly to the remainder term in~\eqref{eq:th:main} in Theorem~\ref{th:main}.

\subsection{Residual spectrum} We are now concerned with the contribution from the residual spectrum $\Ires$. In view of the description of the residual spectrum in \S\ref{sec:half:residual} and~\eqref{def:Wtau} the term $\Ires$ is proportional to $Y^{-1/4}$. It is also linear in $W$ because all constructions in the proof are linear in $W$. Therefore we have
\begin{equation}
\Ires = I(W)M_{\pi,d} Y^{-1/4}
\end{equation}
for some constant $M_{\pi,d}$. This main term could be derived in the same way as in \S\ref{sec:hol}. This would involve more machinery on integral representations of $L$-functions and therefore we have settled for determining only when it is zero.

Now to conclude the proof of Theorem~\ref{th:main} we observe that the constant $M_{\pi,d}$ has to vanish in the following cases. When $d<0$, the Fourier coefficients $\rho_\tau(d)$ of the residual spectrum representations $\tau$ are identically zero. For $\psi_\tau$ from the residual spectrum to contribute non-trivially to~\eqref{expansion-half} it is necessary that the inner product $\langle f\bar \theta,\psi_\tau\rangle$ be non-zero. By the theory of the Shimura integral this can happen only when $L(s,\Msym^2 f)$ has a pole at $s=1$, namely when $f$ is dihedral.

\section{Bounds for Whittaker functions}\label{sec:appendix}

\subsection{Whittaker functions} Our definition of Whittaker functions is as in section 9.2 of~\cite{GR}. Let $p,r\in\BmC$. The Whittaker function satisfies the following differential equation
\begin{equation}
 \frac{d^2 W_{p,ir}}{dy^2}
 +(-\frac{1}{4}+\frac{p}{y}+\frac{1/4+r^2}{y^2})W_{p,ir}=0.
\end{equation}

The differential equation has a regular singularity at zero and an irregular singularity at infinity. Up to scalars, $y\mapsto W_{p,ir}(y)$ is the unique function that decays as $y\to \infty$ (exponentially). An important difficulty in the theory of Whittaker functions is the normalization of the scalar. There doesn't seem to be a canonical normalization in general although the integral representations~\eqref{W-mellin},\eqref{W-haenkel} and~\eqref{W-limiting} produce such. On the practical side this makes some formula differ from place to place in the literature, and on the theoretical side some more care has to be taken when working with Whittaker models attached to automorphic forms.

\subsection{Integral representations} In this subsection we briefly summarize the classical integral representations of $W_{p,ir}$ and how they relate to each other.

According to~\cite{GR}*{(7.621-11)} one has
\begin{equation}\label{W-mellin}
 \int^\infty_{0}
 W_{p,it}(y)e^{-y/2}y^{s}\frac{dy}{y}
 = \frac{\Gamma(\frac{1}{2}+s+it)\Gamma(\frac{1}{2}+s-it)}{\Gamma(1+s-p)}, 
 \quad \MRe s > -1.
\end{equation}

One has the following Hankel integral representation, see~\cite{book:WW}*{\S16.12}:
\begin{equation}\label{W-haenkel}
 W_{p,\nu}(y)=
 \Gamma(p+\Mdemi-\nu)
 e^{-y/2}y^p
\int_{\CmH} (-t)^{\nu-p-\Mdemi}(1+\frac{t}{y})^{p-\Mdemi+\nu}e^{-t}\frac{dt}{2i\pi}.
\end{equation}
The formula is valid for all $y\in \BmR^\times_+$ and $p,\nu\in\BmC$ with the assumption that $p+\Mdemi-\nu$ is not a negative integer. The reference~\cite{book:WW}*{\S16.12} has a modified formula in that case, which is not repeated here because we won't make use of that formula. Here $\CmH$ is Hankel's contour which surrounds the positive real axis and is such that the real number $-y$ lies \Lquote{outside}.

For $\MRe (\nu-p)+\Mdemi>0$ it is possible by a limiting argument to obtain the following~\cite{GR}*{9.222-2}:
\begin{equation}\label{W-limiting}
 W_{p,\nu}(z)=
 \frac{z^p e^{-z/2}}
 {\Gamma(\nu-p+\Mdemi)}
 \int^\infty_0 t^{\nu-p-\Mdemi}
 (1+\frac{t}{z})^{\nu+p-\Mdemi}
 e^{-t}dt.
\end{equation}

\subsection{Asymptotics as \texorpdfstring{$y$}{y} goes to zero}\label{sec:appendix:zero} In this subsection we recall briefly the asymptotic behavior of the Whittaker functions as $y\to 0$. The main purpose of this subsection is to have a consistency test for the numerical values of the constants in the computations. The Proposition~\ref{prop:W-bound} that we shall establish below is more precise because it is uniform in $p$ and $r$ as well.
 
From~\eqref{W-mellin} we deduce that
\begin{equation}\label{W-zero-asymp}
W_{p,ir}(y) \sim 
\frac{\Gamma(-2ir)}{\Gamma(\Mdemi-ir-p)}
y^{\Mdemi+ir}
+
\frac{\Gamma(2ir)}{\Gamma(\Mdemi+ir-p)}
y^{\Mdemi-ir},
\qtext{as $y\to 0$.}
\end{equation}
This is consistent with the power series expansion of confluent hypergeometric functions
 \begin{equation}
\Phi(a,b;y)=\sum^\infty_{n=0}
\frac{(a)_n y^n}{(b)_n n!}.
\end{equation}
 Indeed according to~\cite{GR}*{9.220} and~\cite{GR}*{9.210} we have
\begin{equation}
\begin{aligned}
W_{p,ir}(y) e^{y/2}=\frac{\Gamma(-2ir)}{\Gamma(\Mdemi-ir-p)}
y^{\Mdemi+ir}
&\Phi(\Mdemi+ir-p,1-2ir;y)\\
&+
\frac{\Gamma(2ir)}{\Gamma(\Mdemi+ir-p)}
y^{\Mdemi-ir}
\Phi(\Mdemi-ir-p,1-2ir;y).
\end{aligned}
\end{equation}

\subsection{Preliminary lemmas} In preparation for the proof of Proposition~\ref{prop:W-bound} we recall some elementary estimates concerning certain ratios of the Gamma function. We were not able to locate several of the claims in the literature so that we provide brief proofs for the sake of completness.

We start with an elementary fact which should be more widely known and will be used repeatedly.
\begin{lemma}\label{lem:gamma-decay} For all $\sigma\in \BmR$,
$
t\mapsto \abs{\Gamma(\sigma+it)}
$
is a decreasing function of $\abs{t}$.
\end{lemma}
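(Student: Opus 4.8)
The plan is to write $\abs{\Gamma(\sigma+it)}^2$ explicitly as an infinite product of positive factors, each of which is manifestly a decreasing function of $\abs{t}$. Since $\Gamma$ is real on the real axis, Schwarz reflection gives $\overline{\Gamma(\sigma+it)}=\Gamma(\sigma-it)$, so $\abs{\Gamma(\sigma+it)}^2=\Gamma(\sigma+it)\Gamma(\sigma-it)$; this is in particular an even function of $t$, and so it suffices to prove that it is decreasing on $t\ge 0$.

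First I would insert the Weierstrass product
\[
\Gamma(z)=\frac{e^{-\gamma z}}{z}\prod_{n=1}^\infty\frac{n\,e^{z/n}}{n+z}
\]
with $z=\sigma+it$, together with its complex conjugate. Multiplying the two products and using $z\bar z=\sigma^2+t^2$, $(n+z)(n+\bar z)=(n+\sigma)^2+t^2$ and $e^{z/n+\bar z/n}=e^{2\sigma/n}$, the exponential prefactors and the bare "$n$"-factors combine into quantities independent of $t$, leaving
\[
\abs{\Gamma(\sigma+it)}^2=\frac{e^{-2\gamma\sigma}}{\sigma^2+t^2}\prod_{n=1}^\infty\frac{n^2e^{2\sigma/n}}{(n+\sigma)^2+t^2}.
\]
Taking logarithms, the $t$-dependence enters only through the terms $-\log(\sigma^2+t^2)$ and $-\log((n+\sigma)^2+t^2)$, each of which is strictly decreasing in $t$ on $(0,\infty)$. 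Since the series $\sum_{n\ge 1}\bigl[2\log n+2\sigma/n-\log((n+\sigma)^2+t^2)\bigr]$ converges — the regularizing factors $e^{2\sigma/n}$ being exactly what cancels the $-2\sigma/n$ coming from $\log((n+\sigma)^2+t^2)=2\log n+2\sigma/n+O(1/n^2)$ — and a convergent sum of non-increasing functions is non-increasing, it follows that $\log\abs{\Gamma(\sigma+it)}^2$ is a decreasing function of $t\ge 0$. Taking square roots gives the claim.

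The points that require care — and what I would flag as the main, though mild, obstacle — are the convergence bookkeeping just described and the degenerate case in which $\sigma$ is a non-positive integer. In that case $\Gamma$ has a pole, so one factor $\tfrac{1}{(n+\sigma)^2+t^2}$ blows up as $t\to 0^+$; working directly with the unnormalized product above (rather than dividing through by $\Gamma(\sigma)^2$) keeps every factor finite and positive for $t>0$, so the monotonicity argument goes through unchanged and is consistent with $\abs{\Gamma(\sigma+it)}\to\infty$ as $t\to 0^+$. As an alternative route I would mention the differentiated version: one computes $\tfrac{d}{dt}\log\abs{\Gamma(\sigma+it)}^2=-2\,\MIm\,\psi(\sigma+it)$, and the digamma series yields $\MIm\,\psi(\sigma+it)=\sum_{n\ge 0}\tfrac{t}{(n+\sigma)^2+t^2}\ge 0$ for $t>0$, giving the same conclusion.
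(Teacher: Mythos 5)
Your proof is correct and follows exactly the route the paper takes: the paper's entire proof is the one-line observation that the claim "follows from the Weierstrass product formula," and your argument simply supplies the details of that deduction (pairing $\Gamma(\sigma+it)$ with its conjugate so that each factor of the product is manifestly decreasing in $\abs{t}$). The convergence bookkeeping and the digamma alternative are fine but not needed beyond what you wrote.
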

\begin{proof}
 This follows for instance from the Weierstrass product formula
\begin{equation}
\Gamma(s)=\frac{e^{-\gamma s}}{s}
\prod^\infty_{n=1}
(1+\frac{s}{n})^{-1}
e^{s/n}.
\end{equation}
\end{proof}

Next we recall the Stirling formula: 
\begin{lemma}\label{lem:stirling}
Let $\epsilon>0$. Uniformly on $s$ with $\abs{\arg s}>\pi-\epsilon$ the following holds
\begin{equation}
\Gamma(s)=(\frac{2\pi}{s})^{1/2}
(\frac{s}{e})^s
(1+O(\abs{s}^{-1})).
\end{equation}
\end{lemma}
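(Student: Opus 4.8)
The plan is to establish the stated asymptotic uniformly in the sector $\abs{\arg s}\le \pi-\epsilon$, i.e.\ away from the negative real axis where $\Gamma$ has its poles, and the most efficient route is Binet's second formula. Recall that for $\MRe s>0$ one has the exact identity
\[\log\Gamma(s)=(s-\Mdemi)\log s-s+\Mdemi\log(2\pi)+J(s),\qquad J(s)=2\int_0^\infty \frac{\arctan(t/s)}{e^{2\pi t}-1}\,dt,\]
which I would quote as a classical fact (it follows from the Abel--Plana summation formula, or equivalently from Binet's first integral). Taking logarithms of the claimed formula shows its main terms are exactly $(s-\Mdemi)\log s-s+\Mdemi\log(2\pi)$, so exponentiating reduces the entire statement to the single uniform estimate $J(s)=O(\abs{s}^{-1})$: once this is known, $e^{J(s)}=1+O(\abs{s}^{-1})$ and we are done.

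First I would treat the right half-plane $\MRe s>0$, aiming for uniformity all the way up to the boundary rays $\arg s=\pm\pi/2$. I split the integral at $t=\abs{s}/2$. On $0<t\le \abs{s}/2$ we have $\abs{t/s}\le\Mdemi$, so the power series for $\arctan$ converges and $\abs{\arctan(t/s)}\ll t/\abs{s}$; since $\int_0^\infty \frac{t}{e^{2\pi t}-1}\,dt<\infty$, this piece contributes $O(\abs{s}^{-1})$. On $t>\abs{s}/2$ the kernel is $\ll e^{-\pi\abs{s}}$, and although $\arctan(t/s)$ can approach its branch points $\pm i$ when $\arg s$ is near $\pm\pi/2$, the singularity is only logarithmic, hence integrable against the exponentially small kernel; this tail is $O(e^{-\pi\abs{s}})$. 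Thus $J(s)\ll\abs{s}^{-1}$ uniformly on $\MRe s>0$.

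To cover the remaining sector $\pi/2<\abs{\arg s}\le\pi-\epsilon$ I would invoke the reflection formula in the form $\log\Gamma(s)=\log\pi-\log\sin(\pi s)-\log\Gamma(1-s)$. Here $\MRe s<0$, so $\MRe(1-s)>1$ and $1-s$ lies in the right half-plane, where the previous step already gives Stirling for $\Gamma(1-s)$; moreover in this range $\abs{\MIm s}\ge\abs{s}\sin\epsilon$, so $\log\sin(\pi s)=\mp i\pi s+c_{\pm}+O(e^{-2\pi\abs{\MIm s}})$ with the sign governed by $\Msgn(\MIm s)$ and $c_\pm$ absolute constants. Substituting $\log(1-s)=\log(-s)+O(\abs{s}^{-1})$ and $\log(-s)=\log s\mp i\pi$ into the Stirling expansion of $\log\Gamma(1-s)$ and combining with the $\log\sin$ term, the several $\pm i\pi$ contributions are arranged to cancel and the main terms reassemble into $(s-\Mdemi)\log s-s+\Mdemi\log(2\pi)$, with total error $O(\abs{s}^{-1})$.

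The main obstacle is precisely this bookkeeping across the reflection formula: one must keep track of the branches of $\log$, $\log(-s)$ and $\log\sin(\pi s)$ and verify that no spurious term of size $\abs{s}$ or $\log\abs{s}$ survives the cancellation. The sector bound $\abs{\arg s}\le\pi-\epsilon$ enters exactly here, since it is what forces $\abs{\MIm s}\gg\abs{s}$ in the left half-plane and thereby makes the $\log\sin(\pi s)$ expansion valid with an exponentially small remainder. An alternative, thematically closer to the contour-shifting proof of Proposition~\ref{prop:W-bound}, is the steepest-descent method applied to $\Gamma(s)=s^s\int_0^\infty e^{s(\log u-u)}\,du/u$ through the saddle $u=1$, where $f(u)=\log u-u$ satisfies $f(1)=-1$ and $f''(1)=-1$; there the analogous difficulty is the uniform control of the deformed descent contour and its tails as $\arg s\to\pi$.
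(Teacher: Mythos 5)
Your argument is correct, and it is a complete, standard derivation of a fact that the paper does not prove at all: Lemma~\ref{lem:stirling} is introduced with ``we recall the Stirling formula'' and no proof is offered (the intended reference is the classical literature, e.g.\ Whittaker--Watson, which is in the bibliography). So there is no proof in the paper to compare against; what you have supplied is essentially the textbook proof via Binet's second integral on $\MRe s>0$ together with the reflection formula to reach the sector $\pi/2<\abs{\arg s}\le\pi-\epsilon$, and your bookkeeping claims check out: with $\MIm s>0$ one has $\log\sin(\pi s)=-i\pi s+\log(i/2)+O(e^{-2\pi\MIm s})$ and $\log(-s)=\log s-i\pi$, and the constants recombine to $\Mdemi\log(2\pi)$ exactly as you assert. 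Two small remarks. First, the lemma as printed says $\abs{\arg s}>\pi-\epsilon$, which is evidently a typo for $\abs{\arg s}<\pi-\epsilon$ (the formula is false near the poles on the negative real axis); you silently proved the intended statement, which is the right thing to do. Second, you do not actually need uniformity of Binet's estimate up to the rays $\arg s=\pm\pi/2$: the open half-plane $\MRe s>0$ together with the sector $\pi/2<\abs{\arg s}\le\pi-\epsilon$ already covers everything, which lets you sidestep the (harmless but fussy) discussion of the logarithmic branch points of $\arctan(t/s)$ on the boundary rays.
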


The following lemma will be useful when handling integrals on vertical lines in the Mellin inversion formulas.
\begin{lemma}\label{lem:gamma-infty} For all fixed $\delta>0$, $\Gamma(t+i\abs{t}^{1+\delta})$ is exponentially small as $t\to \pm \infty$.
\end{lemma}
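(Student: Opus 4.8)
The plan is to apply Stirling's formula (Lemma~\ref{lem:stirling}) and track only the dominant exponential factor. Write $s = t + i\abs{t}^{1+\delta} = \rho e^{i\phi}$ with $\rho = \abs{s}$ and $\phi = \arg s$. First I would check that $s$ stays in the sector where Stirling applies: because $\delta>0$, the imaginary part $\abs{t}^{1+\delta}$ dominates the real part $t$ as $\abs{t}\to\infty$, so $\phi \to \pi/2$ and $s$ remains bounded away from the negative real axis. Lemma~\ref{lem:stirling} then gives $\abs{\Gamma(s)} \asymp \abs{s}^{-1/2}\exp(\Re[s\log s - s])$, and it suffices to show that $\Re[s\log s - s] \to -\infty$ like a positive power of $\abs{t}$.

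Next, using $\log s = \log\rho + i\phi$, a direct computation yields $\Re[s\log s - s] = \rho\cos\phi\,(\log\rho - 1) - \rho\,\phi\sin\phi$. I would then insert the asymptotics $\rho \sim \abs{t}^{1+\delta}$, $\sin\phi \to 1$, $\phi \to \pi/2$, and $\cos\phi = t/\rho \sim \pm\abs{t}^{-\delta} \to 0$. The second term satisfies $-\rho\,\phi\sin\phi \sim -\frac{\pi}{2}\abs{t}^{1+\delta}$, whereas the first term is only $O(\abs{t}\log\abs{t})$.

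The \emph{key point}, and the only place the hypothesis $\delta>0$ enters, is that $\abs{t}^{1+\delta}$ grows strictly faster than $\abs{t}\log\abs{t}$. Hence the negative term $-\frac{\pi}{2}\abs{t}^{1+\delta}$ dominates, so $\Re[s\log s - s] = -\frac{\pi}{2}\abs{t}^{1+\delta}(1+o(1)) \to -\infty$, and therefore $\abs{\Gamma(t + i\abs{t}^{1+\delta})} \ll \exp(-\tfrac{\pi}{4}\abs{t}^{1+\delta})$ for all sufficiently large $\abs{t}$, which is the asserted exponential smallness. For $t\to-\infty$ one has $\cos\phi<0$, so the first term becomes negative as well and the conclusion only improves.

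The computation itself is routine once set up, so there is no genuine obstacle; the only care needed is the bookkeeping near $\phi=\pi/2$ in the two regimes $t\to+\infty$ and $t\to-\infty$, and recording that it is exactly the margin $\delta>0$, forcing $\Im s$ to outgrow $\Re s$, that tips $\Re[s\log s - s]$ to $-\infty$. At $\delta=0$ (the $45^\circ$ ray) the first term would instead contribute $+\abs{t}\log\abs{t}$ and the estimate would fail, which is a useful sanity check on the role of the hypothesis.
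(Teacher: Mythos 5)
Your argument is correct and is essentially the paper's: both apply Stirling's formula to $s=t+i\abs{t}^{1+\delta}$ and observe that the term $-\rho\phi\sin\phi\sim-\tfrac{\pi}{2}\abs{t}^{1+\delta}$ dominates the $O(\abs{t}\log\abs{t})$ contribution of $\rho\cos\phi(\log\rho-1)$ precisely because $\delta>0$. The only cosmetic difference is that the paper dispatches the case $t\to-\infty$ via the recursion $\Gamma(s+1)=s\Gamma(s)$ rather than by noting, as you do, that the first term is then itself negative.
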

\begin{proof}
As $t\to \infty$, the Stirling formula implies that
\begin{equation}
\abs{\Gamma(t+it^{1+\delta})}
\asymp t^{(1+\delta)(t-1/2)}
e^{-\pi t^{1+\delta}/2}
\ll e^{-t^{1+\delta'}},
\end{equation}
for all $\delta'<\delta$. When $t\to -\infty$, the order of magnitude is even smaller by the recursion $\Gamma(s+1)=s\Gamma(s)$.
\end{proof}

For $p\in \BmR$, let $\fpart{p}\in [0,1)$ denote the fractional part. For $z\in \BmC$, let $\pnorm{z}=\min\limits_{n\in\BmZ}\abs{z-n}$ be the distance to the nearest integer. 
\begin{lemma}\label{lem:gamma-product-shifted}
Let $\epsilon>0$ and a large integer $A\ge 1$ be given. Uniformly on $a,b \in \BmC$ and $p\in \BmR$ with $\abs{\MRe a},\abs{\MRe b}<A$, $\Im a=\Im b$ and $\pnorm{a+p},\pnorm{b-p}\ge \epsilon$, the following holds
\begin{equation}
h^{-2A}
\ll
\frac{\abs{\Gamma(a+p)\Gamma(b-p)}}
{\abs{\Gamma(a+\fpart{p})\Gamma(b-\fpart{p})}} \ll h^{2A}.
\end{equation}
Here $h=\abs{p}+\abs{\Im a}+1$ and the implied multiplicative constants may depend on $\epsilon,A$.
\end{lemma}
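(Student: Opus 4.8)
The plan is to strip the ratio down to a finite product of linear factors using the functional equation $\Gamma(s+1)=s\Gamma(s)$, and then to extract the cancellation between numerator and denominator. Write $m=\lfloor p\rfloor\in\BmZ$, so $p=m+\fpart{p}$, and set $v:=\MIm a=\MIm b$. By passing to the reciprocal of the quantity (which has exactly the same shape, with the roles of $a+p$ and $b-p$ interchanged) we may assume $m\ge 0$, and it suffices to prove the upper bound $\ll h^{2A}$: applying that bound to the reciprocal then yields the lower bound. Telescoping the four Gamma quotients gives the identity
\[
\frac{\Gamma(a+p)\Gamma(b-p)}{\Gamma(a+\fpart{p})\Gamma(b-\fpart{p})}
=\prod_{k=0}^{m-1}\frac{a+\fpart{p}+k}{\,b-\fpart{p}-1-k\,},
\]
so the whole problem reduces to estimating a product of $m\le\abs{p}$ explicit factors.

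Next I would record pointwise control on the individual factors. Since $a+\fpart{p}+k=(a+p)-(m-k)$ and $b-\fpart{p}-1-k=(b-p)+(m-1-k)$ differ from $a+p$ and $b-p$ by integers, the hypotheses $\pnorm{a+p},\pnorm{b-p}\ge\epsilon$ yield the uniform lower bounds $\abs{a+\fpart{p}+k}\ge\epsilon$ and $\abs{b-\fpart{p}-1-k}\ge\epsilon$, while $\abs{\MRe a},\abs{\MRe b}<A$ give $\abs{a+\fpart{p}+k},\abs{b-\fpart{p}-1-k}\ll_A k+\abs{v}+1$. The essential structural point is that the numerator has real part $\approx+k$ and the denominator real part $\approx-k$, both with the same imaginary part $v$, so their magnitudes are genuinely comparable; one must resist bounding each factor by an absolute constant, which would only produce the useless estimate $C^{\abs{p}}$.

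For the upper bound I would split the product at $k=2A$. The factors with $0\le k<2A$ are each $\ll_{A,\epsilon}1$, since their numerator is $\ll_A 1+\abs{v}$ while the denominator is $\ge\max(\epsilon,\abs{v})$, so no single factor can be large; there are fewer than $2A$ of them, hence their total contribution is $\ll_{A,\epsilon}1$. For $k\ge 2A$ both real parts are bounded away from $0$, and I recombine the remaining factors into a ratio of Gamma values: with $c_1=\MRe a+\fpart{p}$ and $c_2=1+\fpart{p}-\MRe b$ the tail equals $\abs{\Gamma(m+c_1+iv)}\,\abs{\Gamma(2A+c_2+iv)}\big/\bigl(\abs{\Gamma(2A+c_1+iv)}\,\abs{\Gamma(m+c_2+iv)}\bigr)$. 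The Stirling asymptotic $\Gamma(z+c_1)/\Gamma(z+c_2)=z^{c_1-c_2}\bigl(1+O(\abs{z}^{-1})\bigr)$ of Lemma~\ref{lem:stirling} identifies this, up to a bounded multiplicative factor, with $\bigl(\abs{m+iv}/\abs{2A+iv}\bigr)^{c_1-c_2}$; since $\abs{c_1-c_2}=\abs{\MRe a+\MRe b-1}<2A+1$ and $\abs{m+iv}\le h$, this is $\ll_A h^{2A}$. Combining the two ranges proves the upper bound, and the reciprocal product gives the lower bound.

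The step I expect to be the main obstacle is precisely the tail estimate for $k\ge 2A$: one has to exploit that the numerator and denominator real parts are nearly negatives of one another, so that the per-factor deviation from $1$ is summable (of size $O_A\!\bigl(k/(k^2+v^2)\bigr)$) and the accumulated product grows only polynomially in $h$ rather than exponentially in $\abs{p}$. Equivalently, one must apply Lemma~\ref{lem:stirling} only where all four arguments stay uniformly away from the poles of $\Gamma$, which is exactly what the distance-to-integer conditions $\pnorm{a+p},\pnorm{b-p}\ge\epsilon$ guarantee. Keeping the dependence on $v=\MIm a$ under control throughout — so that the estimate is uniform in the imaginary part and not merely in $\abs{p}$ — is the delicate bookkeeping that makes the bound $h^{\pm 2A}$ come out with the correct variable $h=\abs{p}+\abs{\MIm a}+1$.
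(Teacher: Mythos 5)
Your proof is correct, and it shares its skeleton with the paper's: both reduce (somewhat informally, in both cases) to $p>0$ and to the upper bound, and both run the functional equation to turn the four Gamma factors into the telescoped product $\prod_{k=0}^{m-1}\frac{a+\fpart{p}+k}{b-\fpart{p}-1-k}$, whose factors are comparable precisely because $\MIm a=\MIm b$ and the two real parts drift to $+\infty$ and $-\infty$ at the same rate. The difference is in how that product is controlled. The paper first replaces $b$ by $b-2A$, paying the whole $h^{2A}$ up front via $2A$ linear factors (each $\ll_A h$ above and $\ge\epsilon$ below thanks to $\pnorm{b-p}\ge\epsilon$); after that shift every factor of the telescoped product satisfies $\abs{\MRe(\text{numerator})}\le\abs{\MRe(\text{denominator})}$ with equal imaginary parts, so the product is termwise $\le 1$ and no asymptotic analysis is needed. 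You instead split at $k=2A$, bound the $O(A)$ head factors by $O_{A,\epsilon}(1)$, and recombine the tail into a ratio of Gamma values handled by Stirling; this is equally valid and makes the source of the polynomial loss (the exponent $c_1-c_2=\MRe a+\MRe b-1$ in $(\abs{m+iv}/\abs{2A+iv})^{c_1-c_2}$) explicit, at the price of invoking the uniform ratio asymptotic $\Gamma(z+c_1)/\Gamma(z+c_2)=z^{c_1-c_2}(1+O(\abs{z}^{-1}))$. Two small bookkeeping points. First, to land on $h^{2A}$ rather than $h^{2A+1}$ you should note that the base $\abs{m+iv}/\abs{2A+iv}$ is $\ge 1$ on the tail, so only the positive part of the exponent matters, and $\MRe a+\MRe b-1<2A-1$; for the reciprocal product (the lower bound) the exponent can reach $2A+1$, so strictly you obtain $h^{-(2A+1)}$ there --- but the paper's own reflection-formula reduction loses a comparable amount and the exponent $2A$ is explicitly declared non-optimal, so this is harmless. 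Second, the assertion that ``the reciprocal has exactly the same shape'' deserves a sentence: what is true is that $1/Q$, and likewise $Q$ when $\lfloor p\rfloor<0$, is again a product of linear factors with real parts marching to $\pm\infty$, equal imaginary parts, and moduli bounded below by $\epsilon$, so the identical split-and-Stirling argument applies verbatim.
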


The assumption that $a$ and $b$ have the same imaginary part is necessary in the above lemma. In the proof this is used for the inequality~\eqref{pf:gamma-product:3}. The exponent $2A$ is far from optimal although sufficient for our purpose.

\begin{proof} Exchanging $a$ and $b$ and turning $p$ into $-p$, we may assume without loss of generality that $p>0$. Exchanging $a$ into $1-b$ and $b$ into $1-a$, it is sufficient to prove the upper bound only because
\footnote{One could also compare directly the two ratios.}
of Euler's reflection formula $\Gamma(s)\Gamma(1-s)=\dfrac{\pi}{\sin(\pi s)}$.

Without loss of generality we may assume that $A\ge 1$ is a positive integer.
It is not difficult to see that
\begin{equation}
\frac{\abs{\Gamma(b-p)}}
{\abs{\Gamma(b-\fpart{p})}}\ll_{\epsilon,A}
\frac{\abs{\Gamma(b-2A-p)}}
{\abs{\Gamma(b-2A-\fpart{p})}} h^{2A}.
\end{equation}

Let $n=p-\fpart{p}\in \BmN$. We have
\begin{equation}\label{pf:gamma-product:3}
\frac{\abs{\Gamma(a+p)\Gamma(b-2A-p)}}
{\abs{\Gamma(a+\fpart{p})\Gamma(b-2A-\fpart{p})}}
=\prod^n_{i=1}
\frac{\abs{a+\fpart{p}-1+i}}{\abs{b-2A-\fpart{p}-i}} \le 1
\end{equation}
This is because in absolute values, the real part of the numerator is smaller than the real part of the denominator and we recall that $\Im a=\Im b$.
\end{proof}

\subsection{Bounds for Whittaker functions}\label{sec:appendix:pf} We provide now a proof of Proposition~\ref{prop:W-bound}. 

We start with the Mellin inversion of~\eqref{W-mellin} which reads
\begin{equation}\label{pf:eq-Mellin}
W_{p,\nu}(y)e^{-y/2}=
\int_{\MRe s =\sigma} 
\frac{\Gamma(\Mdemi+s+\nu)\Gamma(\Mdemi+s-\nu)}
{\Gamma(1+s-p)}
y^{-s}
\frac{ds}{2i\pi},
\end{equation}
where $\sigma$ is sufficiently large. In all three cases (i-iii) in Proposition~\ref{prop:W-bound} we move the line of integration to $\sigma=0$. We are reduced to controlling the residues on the one hand and the integral on $\sigma=0$ on the other hand. For the integral we shall distinguish between those $s$ with small and large imaginary part. In the sequel we use $B>0$ to denote a large constant that may vary from line to line.

\subsubsection{Residues} Before going into the proof we remark that we may cross poles at $s=\Mdemi\pm \nu-\BmN$. The residues at those points give the two terms in the asymptotic~\eqref{W-zero-asymp}. Also we recall the normalizing factors $\Gamma(\Mdemi+p+ir)$, $\Gamma(\Mdemi+p)$ and $\abs{\Gamma(\Mdemi+p-\nu)\Gamma(\Mdemi+p+\nu)}^{1/2}$ in the left-hand side in cases (i), (ii) and (iii) respectively.

(i) The residue at $s=-\Mdemi-ir$ accounts for
\begin{equation}
\frac{\Gamma(2ir)}{\Gamma(\Mdemi-p-ir)\Gamma(\Mdemi+p+ir)}y^{\Mdemi+ir}
\ll (\abs{p}+\abs{r}+1)^B y^{1/2}
\end{equation}
The upper bound follows from Lemma~\ref{lem:gamma-product-shifted} and Stirling formula. The other residue at $s=-\Mdemi+ir$ is similar. The estimate is admissible compared to the right-hand side of~\eqref{eq:W-bound:i}.

(ii) The residue at $s=-\Mdemi-\nu$ yields a lower order term. The larger term will arise from the residue at $s=-\Mdemi+\nu$. It accounts for
\begin{equation}
\frac{\Gamma(2\nu)}{\Gamma(\Mdemi+\nu-p)\Gamma(\Mdemi+p)} y^{1/2-\nu}
\ll
(\abs{p}+1)^B y^{1/2-\nu}
\end{equation}
Again this estimate is admissible compared to the right-hand side of~\eqref{eq:W-bound:ii}.

(iii) We observe that because of the assumption that $p-\nu-\Mdemi\in\BmN$, the ratio $s\mapsto \frac{\Gamma(\Mdemi+s-\nu)}{\Gamma(1+s-p)}$ has no pole at all, and actually is a polynomial. The first pole of $\Gamma(\Mdemi+s+\nu)$ is when $s=-\Mdemi-\nu$. Since $\nu>-\Mdemi$ there is no residue while moving the line of integration to $\sigma =0$.

\subsubsection{Mellin integrals.} 
It remains to estimate the integral~\eqref{pf:eq-Mellin} when $\sigma=0$.

(i) We want to bound the following ratio (put $s=it$ and recall that $\nu=ir$):
\begin{equation}
\frac{\Gamma(\Mdemi+it+ir)\Gamma(\Mdemi+it-ir)}
{\Gamma(\Mdemi+ir+p)\Gamma(1+it-p)}.
\end{equation}
Precisely we shall exhibit a fast decay when $t$ goes to infinity with a polynomial control in $p$ and $r$.

Because of the decay of the Gamma function on vertical lines (Lemma~\ref{lem:gamma-decay}), we may modify the denominator so that the two Gamma factors are evaluated at the same imaginary part $\max(\abs{t},\abs{r})$. Then we are in position to apply Lemma~\ref{lem:gamma-product-shifted} which enables us to replace $p$ by $\fpart{p}$. After this is done, we apply Stirling's formula (Lemma~\ref{lem:stirling}).

(ii) We need to bound the following ratio
\begin{equation}
\frac{\Gamma(\Mdemi+it+\nu)\Gamma(\Mdemi+it-\nu)}
{\Gamma(1+it-p)\Gamma(\Mdemi+p)}.
\end{equation}
The proof is entirely similar.

(iii) We need to bound the ratio
\begin{equation}\label{pf:iii}
\frac{\Gamma(\Mdemi+\nu+it)\Gamma(\Mdemi-\nu+it)}
{\abs{\Gamma(\Mdemi+p-\nu)\Gamma(\Mdemi+p+\nu)}^{1/2}\Gamma(1+it-p)}.
\end{equation}
For the denominator we first observe that $\abs{\Gamma(\Mdemi+p-\nu)\Gamma(\Mdemi+p+\nu)}^{1/2}$ is at least $\Gamma(p)$. It remains
\begin{equation}
\ll 
\frac{\Gamma(it+p)}{\Gamma(p)}
\frac{\Gamma(\Mdemi+\nu+it)\Gamma(\Mdemi-\nu+it)}
{\abs{\Gamma(it+p)\Gamma(1+it-p)}}
\end{equation}
We apply here Lemma~\ref{lem:gamma-decay} to bound $\frac{\Gamma(it+p)}{\Gamma(p)}$ by one. Also because of Lemma~\ref{lem:stirling} and Lemma~\ref{lem:gamma-infty} the ratio $\frac{\Gamma(it+p)}{\Gamma(p)}$ decays exponentially as $|t|>p^{1+\delta}$.

Since $\nu\in \BmR$, we may apply~Lemma~\ref{lem:gamma-product-shifted} to the product of Gamma functions on the numerator. This enables to replace $\nu$ by $\{\nu\}$. We  apply Lemma~\ref{lem:gamma-product-shifted} to the denominator as well replacing $p$ by $\{p\}$. We conclude with the Stirling formula that the second term is bounded uniformly by a polynomial in $p$ and $\nu$. This concludes the proof of Proposition~\ref{prop:W-bound}. \qed




\begin{bibsection}
\begin{biblist}

\bib{Bellman}{article}{
      author={Bellman, Richard},
       title={Ramanujan sums and the average value of arithmetic functions},
        date={1950},
        ISSN={0012-7094},
     journal={Duke Math. J.},
      volume={17},
       pages={159\ndash 168},
      review={\MR{MR0035312 (11,715h)}},
}

\bib{BH08}{article}{
      author={Blomer, Valentin},
      author={Harcos, Gergely},
       title={The spectral decomposition of shifted convolution sums},
        date={2008},
     journal={Duke Math. J.},
      volume={144},
      number={2},
       pages={321\ndash 339},
}

\bib{BH09}{article}{
      author={Blomer, Valentin},
      author={Harcos, Gergely},
       title={Twisted {$L$}-functions over number fields and {H}ilbert's
  eleventh problem},
        date={2010},
        ISSN={1016-443X},
     journal={Geom. Funct. Anal.},
      volume={20},
      number={1},
       pages={1\ndash 52},
         url={http://dx.doi.org/10.1007/s00039-010-0063-x},
      review={\MR{2647133 (2011g:11090)}},
}

\bib{BHM07}{article}{
      author={Blomer, V.},
      author={Harcos, G.},
      author={Michel, Ph.},
       title={A {B}urgess-like subconvex bound for twisted {$L$}-functions},
        date={2007},
        ISSN={0933-7741},
     journal={Forum Math.},
      volume={19},
      number={1},
       pages={61\ndash 105},
         url={http://dx.doi.org/10.1515/FORUM.2007.003},
        note={Appendix 2 by Z. Mao},
      review={\MR{2296066 (2008i:11067)}},
}

\bib{Blom08}{article}{
      author={Blomer, Valentin},
       title={Sums of {H}ecke eigenvalues over values of quadratic
  polynomials},
        date={2008},
        ISSN={1073-7928},
     journal={Int. Math. Res. Not. IMRN},
      number={16},
       pages={Art. ID rnn059. 29},
      review={\MR{2435749 (2009g:11053)}},
}

\bib{BM05}{article}{
      author={Bruggeman, Roelof~W.},
      author={Motohashi, Yoichi},
       title={A new approach to the spectral theory of the fourth moment of the
  {R}iemann zeta-function},
        date={2005},
     journal={J. Reine Angew. Math.},
      volume={579},
       pages={75\ndash 114},
}

\bib{BM10}{article}{
      author={Baruch, E.},
      author={Mao, Z.},
       title={A generalized {K}ohnen-{Z}agier formula for {M}aass forms},
        date={2010},
     journal={J. London Math. Soc.},
}

\bib{Bykovskii}{article}{
      author={Bykovski{\u\i}, V.~A.},
       title={Spectral expansions of certain automorphic functions and their
  number-theoretic applications},
        date={1984},
        ISSN={0373-2703},
     journal={Zap. Nauchn. Sem. Leningrad. Otdel. Mat. Inst. Steklov. (LOMI)},
      volume={134},
       pages={15\ndash 33},
        note={Automorphic functions and number theory, II},
      review={\MR{MR741852 (86g:11031)}},
}

\bib{DFI8}{article}{
      author={Duke, W.},
      author={Friedlander, J.},
      author={Iwaniec, H.},
       title={The subconvexity problem for {A}rtin \${L}\$-functions},
        date={2002},
        ISSN={0020-9910},
     journal={Invent. Math.},
      volume={149},
      number={3},
       pages={489\ndash 577},
}

\bib{DeI:82}{article}{
      author={Deshouillers, J.-M.},
      author={Iwaniec, H.},
       title={On the greatest prime factor of {$n^{2}+1$}},
        date={1982},
        ISSN={0373-0956},
     journal={Ann. Inst. Fourier (Grenoble)},
      volume={32},
      number={4},
       pages={1\ndash 11 (1983)},
         url={http://www.numdam.org/item?id=AIF_1982__32_4_1_0},
      review={\MR{MR694125 (84m:10033)}},
}

\bib{Duke88}{article}{
      author={Duke, W.},
       title={Hyperbolic distribution problems and half-integral weight {M}aass
  forms},
        date={1988},
        ISSN={0020-9910},
     journal={Invent. Math.},
      volume={92},
      number={1},
       pages={73\ndash 90},
}

\bib{book:Gelbart}{book}{
      author={Gelbart, Stephen~S.},
       title={Weil's representation and the spectrum of the metaplectic group},
      series={Lecture Notes in Mathematics, Vol. 530},
   publisher={Springer-Verlag},
     address={Berlin},
        date={1976},
      review={\MR{MR0424695 (54 \#12654)}},
}

\bib{Good83}{article}{
      author={Good, Anton},
       title={On various means involving the {F}ourier coefficients of cusp
  forms},
        date={1983},
        ISSN={0025-5874},
     journal={Math. Z.},
      volume={183},
      number={1},
       pages={95\ndash 129},
         url={http://dx.doi.org/10.1007/BF01187218},
      review={\MR{MR701361 (84g:10052)}},
}

\bib{GR}{book}{
      author={Gradshteyn, I.~S.},
      author={Ryzhik, I.~M.},
       title={Table of integrals, series, and products},
     edition={Seventh},
   publisher={Elsevier/Academic Press, Amsterdam},
        date={2007},
        ISBN={978-0-12-373637-6; 0-12-373637-4},
        note={Translated from the Russian, Translation edited and with a
  preface by Alan Jeffrey and Daniel Zwillinger, With one CD-ROM (Windows,
  Macintosh and UNIX)},
      review={\MR{MR2360010 (2008g:00005)}},
}

\bib{GS83}{article}{
      author={Goldfeld, D.},
      author={Sarnak, P.},
       title={Sums of {K}loosterman sums},
        date={1983},
        ISSN={0020-9910},
     journal={Invent. Math.},
      volume={71},
      number={2},
       pages={243\ndash 250},
      review={\MR{MR689644 (84e:10037)}},
}

\bib{Hansen}{article}{
      author={Hansen, D.},
       title={{M}ordell-{W}eil growth for {GL2}-type abelian varieties over
  {H}ilbert class fields of {CM} fields},
      eprint={http://arxiv.org/abs/1005.4700},
}

\bib{Harc03}{article}{
      author={Harcos, G.},
       title={An additive problem in the {F}ourier coefficients of cusp forms},
        date={2003},
        ISSN={0025-5831},
     journal={Math. Ann.},
      volume={326},
      number={2},
       pages={347\ndash 365},
}

\bib{HM06}{article}{
      author={Harcos, G.},
      author={Michel, Ph.},
       title={The subconvexity problem for {R}ankin-{S}elberg \${L}\$-functions
  and equidistribution of {H}eegner points. {II}},
        date={2006},
        ISSN={0020-9910},
     journal={Invent. Math.},
      volume={163},
      number={3},
       pages={581\ndash 655},
}

\bib{Hool63}{article}{
      author={Hooley, Christopher},
       title={On the number of divisors of a quadratic polynomial},
        date={1963},
        ISSN={0001-5962},
     journal={Acta Math.},
      volume={110},
       pages={97\ndash 114},
}

\bib{Hooley63}{article}{
      author={Hooley, Christopher},
       title={On the number of divisors of a quadratic polynomial},
        date={1963},
        ISSN={0001-5962},
     journal={Acta Math.},
      volume={110},
       pages={97\ndash 114},
      review={\MR{MR0153648 (27 \#3610)}},
}

\bib{book:IK04}{book}{
      author={Iwaniec, Henryk},
      author={Kowalski, Emmanuel},
       title={Analytic number theory},
      series={American Mathematical Society Colloquium Publications},
   publisher={American Mathematical Society},
     address={Providence, RI},
        date={2004},
      volume={53},
        ISBN={0-8218-3633-1},
}

\bib{Iwaniec}{book}{
      author={Iwaniec, Henryk},
       title={Spectral methods of automorphic forms},
     edition={Second},
      series={Graduate Studies in Mathematics},
   publisher={American Mathematical Society},
     address={Providence, RI},
        date={2002},
      volume={53},
        ISBN={0-8218-3160-7},
      review={\MR{MR1942691 (2003k:11085)}},
}

\bib{Kim-Sarnak}{article}{
      author={Kim, Henry~H.},
       title={Functoriality for the exterior square of {${\rm GL}\sb 4$} and
  the symmetric fourth of {${\rm GL}\sb 2$}},
        date={2003},
        ISSN={0894-0347},
     journal={J. Amer. Math. Soc.},
      volume={16},
      number={1},
       pages={139\ndash 183 (electronic)},
        note={With appendix 1 by Dinakar Ramakrishnan and appendix 2 by Kim and
  Peter Sarnak},
      review={\MR{MR1937203 (2003k:11083)}},
}

\bib{KS}{article}{
      author={Katok, Svetlana},
      author={Sarnak, Peter},
       title={Heegner points, cycles and {M}aass forms},
        date={1993},
        ISSN={0021-2172},
     journal={Israel J. Math.},
      volume={84},
      number={1-2},
       pages={193\ndash 227},
      review={\MR{MR1244668 (94h:11051)}},
}

\bib{book:Lang:sl2}{book}{
      author={Lang, Serge},
       title={{${\rm SL}_2({\bf R})$}},
      series={Graduate Texts in Mathematics},
   publisher={Springer-Verlag},
     address={New York},
        date={1975},
      volume={105},
        ISBN={0-387-96198-4},
        note={Reprint of the 1975 edition},
}

\bib{LRS}{article}{
      author={Luo, W.},
      author={Rudnick, Z.},
      author={Sarnak, P.},
       title={On {S}elberg's eigenvalue conjecture},
        date={1995},
        ISSN={1016-443X},
     journal={Geom. Funct. Anal.},
      volume={5},
      number={2},
       pages={387\ndash 401},
      review={\MR{MR1334872 (96h:11045)}},
}

\bib{Michel-park}{incollection}{
      author={Michel, Ph.},
       title={Analytic number theory and families of automorphic
  {L}-functions},
        date={2007},
   booktitle={Automorphic forms and applications},
      series={IAS/Park City Math. Ser.},
      volume={12},
   publisher={Amer. Math. Soc.},
     address={Providence, RI},
       pages={181\ndash 295},
      review={\MR{MR2331346}},
}

\bib{cong:park:mich}{incollection}{
      author={Michel, Ph.},
       title={Analytic number theory and families of automorphic
  \${L}\$-functions},
   booktitle={Automorphic forms and applications},
      series={IAS/Park City Math. Ser.},
      volume={12},
   publisher={Amer. Math. Soc.},
     address={Providence, RI},
       pages={181\ndash 295},
}

\bib{MV10}{article}{
      author={Michel, Philippe},
      author={Venkatesh, Akshay},
       title={The subconvexity problem for {${\rm GL}_2$}},
        date={2010},
        ISSN={0073-8301},
     journal={Publ. Math. Inst. Hautes \'Etudes Sci.},
      number={111},
       pages={171\ndash 271},
         url={http://dx.doi.org/10.1007/s10240-010-0025-8},
      review={\MR{2653249}},
}

\bib{Pras93}{incollection}{
      author={Prasad, Dipendra},
       title={Weil representation, {H}owe duality, and the theta
  correspondence},
        date={1993},
   booktitle={Theta functions: from the classical to the modern},
      series={CRM Proc. Lecture Notes},
      volume={1},
   publisher={Amer. Math. Soc.},
     address={Providence, RI},
       pages={105\ndash 127},
      review={\MR{MR1224052 (94e:11043)}},
}

\bib{PS84}{incollection}{
      author={Piatetski-Shapiro, Ilya},
       title={Work of {W}aldspurger},
        date={1984},
   booktitle={Lie group representations, {II} ({C}ollege {P}ark, {M}d.,
  1982/1983)},
      series={Lecture Notes in Math.},
      volume={1041},
   publisher={Springer},
     address={Berlin},
       pages={280\ndash 302},
      review={\MR{748511 (86g:11030)}},
}

\bib{Sarn84}{incollection}{
      author={Sarnak, Peter},
       title={Additive number theory and {M}aass forms},
        date={1984},
   booktitle={Number theory ({N}ew {Y}ork, 1982)},
      series={Lecture Notes in Math.},
      volume={1052},
   publisher={Springer},
     address={Berlin},
       pages={286\ndash 309},
      review={\MR{MR750670 (86f:11042)}},
}

\bib{Sarn94}{article}{
      author={Sarnak, Peter},
       title={Integrals of products of eigenfunctions},
        date={1994},
        ISSN={1073-7928},
     journal={Internat. Math. Res. Notices},
      number={6},
       pages={251 ff., approx. 10 pp. (electronic)},
}

\bib{Selb65}{incollection}{
      author={Selberg, Atle},
       title={On the estimation of {F}ourier coefficients of modular forms},
        date={1965},
   booktitle={Proc. {S}ympos. {P}ure {M}ath., {V}ol. {VIII}},
   publisher={Amer. Math. Soc.},
     address={Providence, R.I.},
       pages={1\ndash 15},
      review={\MR{MR0182610 (32 \#93)}},
}

\bib{Cohen-Sarnak}{article}{
      author={Selberg, A.},
       title={Notes on {S}elberg's lectures by {C}ohen and {S}arnak},
}

\bib{Shimura73}{article}{
      author={Shimura, Goro},
       title={On modular forms of half integral weight},
        date={1973},
        ISSN={0003-486X},
     journal={Ann. of Math. (2)},
      volume={97},
       pages={440\ndash 481},
      review={\MR{MR0332663 (48 \#10989)}},
}

\bib{Se-St}{incollection}{
      author={Serre, J.-P.},
      author={Stark, H.~M.},
       title={Modular forms of weight {$1/2$}},
        date={1977},
   booktitle={Modular functions of one variable, {VI} ({P}roc. {S}econd
  {I}nternat. {C}onf., {U}niv. {B}onn, {B}onn, 1976)},
   publisher={Springer},
     address={Berlin},
       pages={27\ndash 67. Lecture Notes in Math., Vol. 627},
      review={\MR{MR0472707 (57 \#12400)}},
}

\bib{Temp:non-split}{article}{
      author={Templier, N.},
       title={A non-split sum of coefficients of modular forms},
        date={2011},
     journal={Duke Math. J.},
      volume={157},
      number={1},
       pages={109\ndash 165},
}

\bib{Temp:height}{article}{
      author={Templier, N.},
       title={Minoration du rang des courbes elliptiques sur les corps de
  classes de {H}ilbert},
     journal={To appear in Compositio Math.},
}

\bib{Venk05}{article}{
      author={Venkatesh, Akshay},
       title={Sparse equidistribution problems, period bounds and
  subconvexity},
        date={2010},
        ISSN={0003-486X},
     journal={Ann. of Math. (2)},
      volume={172},
      number={2},
       pages={989\ndash 1094},
         url={http://dx.doi.org/10.4007/annals.2010.172.989},
      review={\MR{2680486}},
}

\bib{Wald80}{article}{
      author={Waldspurger, J.-L.},
       title={Correspondance de {S}himura},
        date={1980},
        ISSN={0021-7824},
     journal={J. Math. Pures Appl. (9)},
      volume={59},
      number={1},
       pages={1\ndash 132},
      review={\MR{MR577010 (83f:10029)}},
}

\bib{book:WW}{book}{
      author={Whittaker, E.~T.},
      author={Watson, G.~N.},
       title={A course of modern analysis},
      series={Cambridge Mathematical Library},
   publisher={Cambridge University Press},
     address={Cambridge},
        date={1996},
        ISBN={0-521-58807-3},
        note={An introduction to the general theory of infinite processes and
  of analytic functions; with an account of the principal transcendental
  functions, Reprint of the fourth (1927) edition},
      review={\MR{MR1424469 (97k:01072)}},
}

\end{biblist}
\end{bibsection}

\end{document}